\renewcommand{\P}{\mathbb{P}}
\newcommand{\N}{\mathbb{N}}
\newcommand{\A}{\mathbb{A}}
\newcommand{\Z}{\mathbb{Z}}
\newcommand{\mA}{\mathcal{A}}
\newcommand{\mB}{\mathcal{B}}
\newcommand{\mC}{\mathcal{C}}
\newcommand{\mG}{\mathcal{G}}
\newcommand{\mS}{\mathcal{S}}
\newcommand{\mT}{\mathcal{T}}
\newcommand{\bfa}{\mathbf{a}}
\newcommand{\bfy}{\mathbf{y}}
\newcommand{\bfz}{\mathbf{z}}
\newcommand{\bfs}{\mathbf{s}}
\newcommand{\bx}{\mathbf{x}}
\newcommand{\bt}{\mathbf{t}}
\newcommand{\type}{{\rm type}}
\newcommand{\MSG}[1]{{\rm MSG (#1)}}
\newcommand{\Ap}{{\rm Ap}}
\newcommand{\AP}{{\rm AP}}
\newcommand{\ini}{{\rm in}}
\newcommand{\reg}[1]{{\rm reg}(#1)}
\def\acm{arithmetically Cohen-Macaulay}
\def\cm{Cohen-Macaulay}
\def\iff{if and only if}
\theoremstyle{plain}
\newtheorem{theorem}{Theorem}[section]
\newtheorem{lemma}[theorem]{Lemma}
\newtheorem{corollary}[theorem]{Corollary}
\newtheorem{proposition}[theorem]{Proposition}
\newtheorem{prob}[theorem]{Open problem}
\theoremstyle{definition}
\newtheorem{definition}[theorem]{Definition}
\newtheorem{remark}[theorem]{Remark}
\newtheorem{note}[theorem]{Note}
\newtheorem{example}{Example}
\title{Projective Cohen-Macaulay monomial curves and their affine charts}
\author[I. García-Marco]{Ignacio García-Marco\,\orcidlink{0000-0003-4993-7577}}
 \address{Instituto de Matem\'aticas y Aplicaciones (IMAULL), Secci\'on de Matem\'aticas, Facultad de
Ciencias, Universidad de La Laguna, 38200, La Laguna, Spain}
 \email{iggarcia@ull.edu.es}
\author[P. Gimenez]{Philippe Gimenez\,\orcidlink{0000-0002-5436-9837}}
 \address{Instituto de Investigaci\'on en Matem\'aticas de la Universidad de Valladolid (IMUVA), Universidad de Valladolid, 47011 Valladolid, Spain.}
 \email{pgimenez@uva.es}
 \author[M. Gonz\'alez-S\'anchez]{Mario Gonz\'alez-S\'anchez\,\orcidlink{0000-0002-6458-7547}}
 \address{Instituto de Investigaci\'on en Matem\'aticas de la Universidad de Valladolid (IMUVA), Universidad de Valladolid, 47011 Valladolid, Spain.}
 \email{mario.gonzalez.sanchez@uva.es}
\thanks{
This work was supported in part by the grant PID2022-137283NB-C22 funded by MICIU/AEI/ 10.13039/501100011033 and by ERDF/EU.
Part of this research was done during a visit of M.G. and Ph.G. at Universidad de La Laguna funded by {\it Plan de Incentivaci\'on de la actividad investigadora ULL}. 
The third author thanks financial support from European Social Fund, {\it Programa Operativo de Castilla y Le\'on}, and {\it Consejer\'ia de Educaci\'on de la Junta de Castilla y Le\'on.}.
}
\subjclass[2020]{Primary 14Q05; Secondary 13D02, 20M25, 14H45}
\keywords{projective monomial curve, affine monomial curve, Apery set, poset, Betti numbers}
\begin{document}

\begin{abstract}
In this paper, we explore when the Betti numbers of the coordinate rings of a projective monomial curve and one of its affine charts are identical. Given an infinite field $k$ and a sequence of relatively prime integers $a_0 = 0 < a_1 < \cdots < a_n = d$, we consider the projective monomial curve $\mathcal{C}\subset\mathbb{P}_k^{\,n}$ of degree $d$ parametrically defined by $x_i = u^{a_i}v^{d-a_i}$ for all $i \in \{0,\ldots,n\}$ and its  coordinate ring $k[\mathcal{C}]$. The curve $\mathcal{C}_1 \subset \mathbb A_k^n$ with parametric equations $x_i = t^{a_i}$ for $i \in \{1,\ldots,n\}$ is an affine chart of $\mathcal{C}$ and we denote by $k[\mathcal{C}_1]$ its coordinate ring. 
The main contribution of this paper is the introduction of a novel (Gröbner-free) combinatorial criterion that provides a sufficient condition for the equality of the Betti numbers of $k[\mathcal{C}]$ and $k[\mathcal{C}_1]$. Leveraging this criterion, we identify infinite families of projective curves satisfying this property. Also, we use our results to study the so-called shifted family of monomial curves, i.e., the family of curves associated to the sequences $j+a_1 < \cdots < j+a_n$ for different values of $j \in \mathbb N$. In this context, Vu proved that for large enough values of $j$, one has an equality between the Betti numbers of the corresponding affine and projective curves. Using our results, we improve Vu's upper bound for the least value of $j$ such that this occurs.
\end{abstract}

\maketitle

\section*{Introduction}

Let $k$ be an infinite field, and $k[\bx] := k[x_1,\ldots,x_n]$ and $k[\bt] := k[t_1,\ldots,t_m]$ be two
polynomial rings over $k$.
Given  $\mB = \{b_1,\ldots,b_n\} \subset \N^m$, a set of nonzero vectors, each element $b_i = (b_{i1},\ldots,b_{im}) \in \N^m$ corresponds to the monomial $\bt^{b_i} := t_1^{b_{i1}}\cdots t_m^{b_{im}} \in k[\bt]$.
The affine toric variety $X_{\mB} \subset \mathbb{A}_k^n$ determined by $\mB$ is the Zariski closure of the set given parametrically by $x_i = u_1^{b_{i1}}\cdots u_m^{b_{im}}$ for all $i = 1,\ldots,n$. Consider 
\[ \mS_\mB := \langle b_1,\ldots,b_n\rangle = \{ \alpha_1 b_1 + \cdots + \alpha_n b_n \, \vert \, \alpha_1,\ldots,\alpha_n \in \N\} \subset \N^m\,, \] 
the affine monoid spanned by $\mB$. The toric ideal determined by $\mB$ is the kernel $I_{\mB}$ of the homomorphism of $k$-algebras
$\varphi_\mB: k[\bx] \longrightarrow k[\bt]$ induced by $x_i \mapsto \bt^{b_i}$. Since $k$ is infinite, one has that $I_\mB$ is the vanishing ideal of $X_\mB$; see, e.g., \cite[Cor.~8.4.13]{Villa} and,  hence, the coordinate ring of $X_{\mB}$ is (isomorphic to) the monomial algebra $k[\mS_\mB] := {\rm Im}(\varphi_\mB) \simeq k[\bx]/I_\mB$. 
The ideal $I_\mB$ is an $\mS_\mB$-homogeneous binomial ideal, i.e., if one sets the $\mS_\mB$-degree of a monomial $\bx^{\alpha} \in k[\bx]$ as ${\rm deg}_{\mS_\mB}(\bx^{\alpha}) :=
\alpha_1 b_1 + \cdots + \alpha_n b_n \in \mS_\mB$, then $I_{\mB}$ is generated by $\mS_{\mB}$-homogeneous binomials. 
One can thus consider a minimal $\mS_\mB$-graded free resolution of $k[\mS_\mB]$ as $\mS_\mB$-graded $k[\bx]$-module,
\[ \mathcal F: 0 \longrightarrow F_p \longrightarrow \cdots  \longrightarrow F_0 \longrightarrow k[\mS_\mB] \longrightarrow 0\,. \]
The projective dimension of $k[\mS_\mB]$ is ${\rm pd}(k[\mS_\mB]) = {\rm max}\{i \, \vert \, F_i \neq 0\}$.
The $i$-th Betti number of $k[\mS_\mB]$ is the rank of the free module $F_i$, i.e., $\beta_i(k[\mS_\mB]) = {\rm rank}(F_i)$; and the Betti sequence of $k[\mS_\mB]$ is $( \beta_i(k[\mS_\mB]) \, ; \, 0 \leq i  \leq {\rm pd}(k[\mS_\mB]))$.  When the Krull dimension of $k[\mS_\mB]$ coincides with its depth as $k[\bx]$-module, the ring $k[\mS_\mB]$ is said to be Cohen-Macaulay. By the Auslander-Buchsbaum formula, this is equivalent to ${\rm pd}(k[\mS_\mB]) = n - {\rm dim}(k[\mS_\mB])$. When $k[\mS_\mB]$ is Cohen-Macaulay, its (Cohen-Macaulay) type is the rank of the last nonzero module in the resolution, i.e.,  ${\type}(k[\mS_\mB]) := \beta_p(k[\mS_\mB])$ where $p = {\rm pd}(k[\mS_\mB])$. One says that $k[\mS_\mB]$ is Gorenstein if it is \cm{} of type $1$. Recall that if $I_\mB$ is a complete intersection, then $k[\mS_\mB]$ is Gorenstein.
\newline

Now consider an integer $d>0$ and  a sequence $a_0 = 0 < a_1 < \cdots < a_n = d$ of relatively prime integers, i.e., $\gcd(a_1,\ldots,a_n)=1$.  Denote by $\mC$ the projective monomial curve $\mC\subset\P_k^{\,n}$ of degree $d$ parametrically defined by $x_i = u^{a_i}v^{d-a_i}$ for all $i \in \{0,\ldots,n\}$, i.e., $\mC$ is the Zariski closure of \[ \{(u^{a_0}v^{d-a_0}:\cdots : u^{a_i} v^{d-a_i}: \cdots : u^{a_n} v^{d-a_n}) \in \P_k^{\,n} \, \vert \, (u:v) \in \P_k^1\} . \] 
Taking $\mA = \{\bfa_0,\ldots,\bfa_n\} \subset \N^2$ with $\bfa_i = (a_i,d-a_i)$ for all $i=0,\ldots,n$, one has that $I_\mA$ is the vanishing ideal of $\mC$, and the coordinate ring of $\mC$ is the two-dimensional ring $k[\mC] = k[x_0,\ldots,x_n]/I_\mA \simeq k[\mS_\mA]$, where $\mS_\mA$ denotes the monoid spanned by $\mA$.  The projective monomial curve $\mC$ is said to be \acm{} 
(resp. Gorenstein) if the ring $k[\mC]$ is Cohen-Macaulay (resp. Gorenstein).
\newline

The projective curve $\mC$ has two affine charts,  $\mC_1 =\{(u^{a_1},\ldots,u^{a_n}) \in \A_k^n \, \vert \, u \in k\}$ and $\mC_2 = \{(v^{d-a_0}, v^{d-a_1},\ldots,v^{d-a_{n-1}}) \in \A_k^n \, \vert \, v \in k\}$,
associated to the sequences $a_1 < \cdots < a_n$ and  $d - a_{n-1} < \cdots < d - a_1 < d - a_0$, respectively. The second sequence is sometimes called the {\it dual} of the first one. Denote by $\mS_1 := \mS_{\mA_1}$ the submonoid of $\N$ spanned by $\mA_1$. Since $a_1,\ldots,a_n$ are relatively prime, then $\N \setminus \mS_1$ is finite and $\mS_1$ is called a {\it numerical semigroup}. The vanishing ideal of $\mC_1$ is $I_{\mA_1} \subset k[x_1,\ldots,x_n]$, and hence, its coordinate ring is the one-dimensional ring $k[\mC_1] = k[x_1,\ldots,x_n]/I_{\mA_1} \simeq k[\mS_1]$. Moreover, $I_\mA$ is the homogenization of $I_{\mA_1}$ with respect to the variable $x_0$.
Similarly, denoting by $\mS_2 := \mS_{\mA_2}$ the numerical semigroup generated by $\mA_2 := \{d-a_0,d-a_1,\ldots,d-a_{n-1}\}$, the vanishing ideal of $\mC_2$ is $I_{\mA_2} \subset k[x_0,\ldots,x_{n-1}]$, its coordinate ring is $k[\mC_2] = k[x_0,\ldots,x_{n-1}]/I_{\mA_2} \simeq k[\mS_2]$, and $I_\mA$ is the homogenization of $I_{\mA_2}$ with respect to $x_n$. 
\newline

One has that $\beta_i(k[\mC]) \geq \beta_i(k[\mC_1])$ for all $i$, and the goal of this work is to understand when the Betti sequences of $k[\mC]$ and $k[\mC_1]$ coincide. A  necessary condition is that $k[\mC]$ is Cohen-Macaulay. Indeed, affine monomial curves are always \acm{} while projective ones may be \acm{} or not, and ${\rm pd}(k[\mC]) = {\rm pd}(k[\mC_1])$ if and only if $\mC$ is \acm. The Cohen-Macaulay condition for $k[\mC]$ is thus necessary for the two Betti sequences to have the same length (and hence it is necessary for the two Betti sequences to coincide). In Theorem~\ref{thm:BettiPosets}, which is the main result of this work, we provide a combinatorial sufficient condition for having equality between the Betti sequences of $k[\mC]$ and $k[\mC_1]$ by means of the poset structures induced by $\mS$ and $\mS_1$ on the Apery sets of both $\mS$ and $\mS_1$. 
In Propositions \ref{pr:aritm} and \ref{pr:quasiaritm}, we use our main result to provide explicit families of curves where $\beta_i(k[\mC]) = \beta_i(k[\mC_1])$ for all $i$. In Section \ref{sec:vu}, we apply our results to study the shifted family of monomial curves, i.e., the family of curves associated to the sequences $j+a_1 < \cdots < j+a_n$ parametrized by $j \in \mathbb N$. In this setting, Vu proved in \cite{Vu} that the Betti numbers in the shifted family become periodic in $j$ for $j > N$ for an integer $N$ explicitly given. A key step in his argument is to prove that for $j > N$ one has equality between the Betti numbers of the  affine and projective curves. Using our results, we substantially improve this latter bound in Theorem~\ref{thm:mejoravu}. Finally, we show in Section \ref{sec:gorenstein} how to construct arithmetically Gorenstein projective curves from a symmetric numerical semigroup (Thm.~\ref{th:gorenstein}).
\newline

There are several sources of  motivation for this work. In \cite{GSS} the authors completely describe the minimal graded free resolution of affine monomial curves defined by an arithmetic sequence $a_1 < \cdots < a_n$, i.e. $a_i = a_1+(i-1)e$ for some $e\in \Z^+$ such that $\gcd(a_1,e) = 1$. In particular, in \cite[Thm.~4.7]{GSS}, they deduce that the Cohen-Macaulay type of these curves, i.e., the last Betti number in the Betti sequence, is given by the only integer $t \in \{1,\ldots,n-1\}$ such that $t \equiv a_1 - 1 \ ({\rm mod}\ n-1)$. Interestingly, later in \cite[Thm.~2.13]{BGG}, exactly the same result is obtained for projective monomial curves defined by an arithmetic sequence. Moreover, some computer-assisted experiments using \cite{Singular} showed that in some cases the whole Betti sequences coincide, while in other cases they do not. 

\begin{example}\label{ex:intro}
\begin{enumerate}[(a)]
\item \label{ex:intro_a} For $4 < 5  < 6 < 7 < 8$, $\mC$ is \acm{} and the Betti sequences of $k[\mC_1]$ and $k[\mC]$ are both $(1,7,14,11,3)$. 
\item \label{ex:intro_b} For $4 < 5 < 6 < 7 < 8 < 9$, $\mC$ is \acm{} and the Betti sequence of $k[\mC_1]$ is $(1,8,21,25,14,3)$, while the Betti sequence of $k[\mC]$ is $(1,12,25,25,14,3)$. Observe that, in this example, the Cohen-Macaulay type of both $k[\mC]$ and $k[\mC_1]$ is 3, as expected by \cite[Thm.~4.7]{GSS} and \cite[Thm.~2.13]{BGG}, but the two Betti sequences do not coincide. 
\end{enumerate}
\end{example}

On the other hand, Vu proves in \cite{Vu} that given $a_1 < \cdots < a_n$, if one considers the sequence of positive integers $j + a_1 < \cdots < j + a_n$ parametrized by $j \in \N$, then the  Betti numbers of the corresponding affine monomial curves  become eventually periodic. One of the key steps in Vu's argument is to prove that for values of $j$ big enough, the Betti sequences of the affine and projective monomial curves coincide.  Later, Saha, Sengupta and Srivastava obtain in \cite{Sengupta2023} a sufficient condition in terms of Gr\"obner bases to ensure the equality of the Betti sequences.  \newline

Finally, Jafari and Zarzuela provide in \cite{JZ} a combinatorial sufficient condition for having equality between the Betti sequences of $k[\mC_1]$ and of the coordinate ring of its corresponding tangent cone $G(\mC_1)$. More precisely, they define homogeneous numerical semigroups as those whose Apery set with respect to $a_1$ has a graded poset structure with respect to the partial ordering induced by $\mS_1$. In \cite[Thm.~3.17]{JZ}, they prove that the Betti sequences of $k[\mC_1]$ and $G(\mC_1)$ coincide when $\mS_1$ is homogeneous and $G(\mC_1)$ is Cohen-Macaulay. The spirit of our approach in this paper is similar although the problems addressed are different as the next examples show.

\begin{example}
\begin{enumerate}[(a)]
    \item \cite[Ex.~5.5]{EjShibuta} For the sequence $3m < 3m+1 < 6m+3$ with $m \geq 2$, one has that $\beta_1(k[\mC_1]) = 2$ and  $\beta_1(G(\mC_1)) = m+2$. Indeed,
$I_\mA = (x_2^3 - x_1 x_3 x_4, x_1^{2m+1} - x_3^m x_4^{m+1})$, and the Betti sequences of $k[\mC_1]$ and $k[\mC]$ are both $(1,2,1)$, while $\beta_1(G(\mC_1)) = m+2 \geq 4$. 
Computational experiments in \cite{Stamate} suggest that in this example, the Betti sequence of $G(\mC_1)$ is $(1, m + 2, 2m, m-1)$. 
\item \cite[Cor.~3.2]{BGG} Numerical semigroups defined by a generalized arithmetic sequence, i.e., $\mS_1 = \langle a, ha+e,\ldots,ha+(n-1)e\rangle$ for positive integers $h, a, e, n$, always satisfy that the Betti sequences of $k[\mC_1]$ and $G(\mC_1)$ are equal, and hence, $G(\mC_1)$ is Cohen-Macaulay (see \cite[Cor.~3.10]{JZ}). However, whenever $h > 1$ and $n \geq 3$, $k[\mC]$ is not Cohen-Macaulay, so the Betti sequences of $k[\mC_1]$ and $k[\mC]$ are different. When $h=1$, i.e., when the sequence is arithmetic, then $k[\mC]$ is  Cohen-Macaulay but the Betti sequences may also be different as Example \ref{ex:intro}\ref{ex:intro_b} shows.
\end{enumerate}
\end{example}

\subsection*{Notations}
For a numerical semigroup $\mS\subset \N$, we denote by $\MSG{\mS}$ its minimal system of generators. 
If $\bx^\alpha \in k[x_1,\ldots,x_n]$ is a monomial, where $\alpha=(\alpha_1,\ldots,\alpha_n) \in \N^n$, we denote its degree by $|\alpha| = \sum_{i=1}^n \alpha_i$. For a finite set $A$, $|A|$ is its cardinality. 
For a finite subset $A \subset \N$ and a natural number $s\in \N$, $s\geq 1$, the $s$-fold iterated sumset of $A$ is $sA := \{a_1+\dots+a_s \mid a_1,\ldots,a_s \in A\}$. We also denote $0A:=\{0\}$.

Along the paper, we consider $>$, the degree reverse lexicographical order (degrevlex) on $k[x_1,\ldots,x_n]$ with $x_1>x_2>\dots>x_n$. This means that, for $\bx^\alpha,\bx^\beta \in k[x_1,\ldots,x_n]$, $\bx^\alpha<\bx^\beta$ if either $|\alpha|<|\beta|$, or $|\alpha|=|\beta|$ and the last nonzero coordinate of $\beta-\alpha$ is negative. Note that our results hold if one chooses a degree reverse lexicographical order on $k[x_1,\ldots,x_n]$ with $x_n<x_i$ for all $i\in \{1,\ldots,n-1\}$. 
For $f\in k[x_1,\ldots,x_n]$, $\ini_>(f)$ denotes the initial term of $f$, and given an ideal $I\subset k[x_1,\ldots,x_n]$, $\ini_>(I)$ denotes the initial ideal of $I$, both with respect to $>$.

\section{Apery sets and their poset structure} \label{sec:AffineProjMonomialCurves}

Let $d \in \Z^+$ and $a_0 = 0 < a_1 < \cdots < a_n = d$ be a sequence of relatively prime integers. For each $i=0,\ldots,n$, set $\bfa_i := (a_i,d-a_i) \in \N^2$, and consider the three sets $\mA_1 = \{a_1,\ldots,a_n\}$, $\mA_2 = \{d,d-a_{1},\ldots,d-a_{n-1}\}$ and $\mA = \{\bfa_0,\ldots,\bfa_n\} \subset \N^2$.
We denote by $\mC \subset \P_k^{\,n}$ the projective monomial curve parametrized by $\mA$ as defined in the introduction, and by $\mC_1$ and $\mC_2$ its affine charts, i.e., the affine monomial curves given by $\mA_1$ and $\mA_2$, respectively. We denote the vanishing ideal of $\mC_i$ by $I_{\mA_i}$ for $i=1,2$ and the vanishing ideal of $\mC$ by $I_\mA$.
Consider $\mS_1$ and $\mS_2$ the numerical semigroups generated by $\mA_1$ and $\mA_2$ respectively, and $\mS$ the monoid spanned by $\mA$ that we call the homogenization of $\mS_1$ (with respect to $d$). \newline

As already mentioned, $k[\mC_1]$ and $k[\mC_2]$ are always \cm{}, while $k[\mC]$ can be \cm{} or not. There are many ways to determine when a projective monomial curve is \acm; see, e.g.,  \cite[Cor.~4.2]{CG}, \cite[Lem.~4.3, Thm.~4.6]{CN}, \cite[Thm.~2.6]{GSW} or \cite[Thm.~2.2]{HerzogStamate2019}. We recall some of them in Proposition~\ref{prop:caractCM}, but let us previously introduce the notion of Apery set since it is involved in some of those charaterizations. 
For $i = 1,2$, the {\it Apery set of $\mS_i$ with respect to $d$} is
\[{\Ap}_i := \{ y \in \mS_i \, \vert \, y - d \notin \mS_i\}\,.\]
Since $\gcd(\mA_1) = 1$, we know that ${\Ap}_i$ is a complete set of residues modulo $d$, i.e., 
${\Ap}_1 =  \{r_0 = 0,r_1,\ldots,r_{d-1}\}$ and ${\Ap}_2 = \{t_0 = 0,t_1,\ldots,t_{d-1}\}$ for some positive integers $r_i$ and $t_i$ such that $r_i \equiv t_i \equiv i \pmod{d}$ for all $i=1,\ldots,d-1$. 
One can also define the {\it Apery set of $\mS$} as 
\[\AP_{\mS} := \{\bfy \in \mS \, \vert \, \bfy - \bfa_0 \notin \mS, \bfy - \bfa_{n} \notin \mS\}\,.\]
This set is finite and has at least $d$ elements by \cite[Lem.~2.5]{GG23}.

\begin{proposition}\label{prop:caractCM}
The following statements are equivalent:
 \begin{enumerate}[(a)] 
 \item\label{prop:caractCMa} $\mC$ is \acm{}.
 \item\label{prop:caractCMb} $\AP_{\mS}$ has exactly $d$ elements.
 \item\label{prop:caractCMc} $\AP_{\mS} = \{(0,0)\} \cup \{(r_i, t_{d - i}) \, \vert \, 1 \leq i < d\}$.
 \item\label{prop:caractCMd} For all $i=1,\ldots,d-1$, $(r_i,t_{d-i}) \in \mS$. In other words, if $q_1 \in \Ap_1,\ q_2 \in \Ap_2$ and $q_1 + q_2 \equiv 0 \pmod{d}$, then $(q_1,q_2) \in \mS$.
 \item\label{prop:caractCMe} If $\bfs \in \Z^2$ satisfies $\bfs+\bfa_0\in\mS$ and $\bfs+\bfa_{n}\in\mS$, then $\bfs \in \mS$.
 \item\label{prop:caractCMf} The variable $x_n$ does not divide any minimal generator of $\ini_>(I_{\mA_1})$, the initial ideal of $I_{\mA_1}$ for the degrevlex order in $k[x_1,\ldots,x_n]$ with $x_1>\dots>x_n$.
 \end{enumerate}
\end{proposition}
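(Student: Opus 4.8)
The plan is to prove (a)$\Leftrightarrow$(b) by a length-versus-degree argument, to obtain the equivalences (b)$\Leftrightarrow$(c)$\Leftrightarrow$(d) combinatorially from a description of $\AP_{\mS}$, and to deduce (e) and (f) from (a). For (a)$\Leftrightarrow$(b) I would argue as follows. The images of $x_0$ and $x_n$ in $k[\mC]=k[\mS]$ form a homogeneous system of parameters, since $V(x_0)\cap\mC$ and $V(x_n)\cap\mC$ reduce to the two distinct points at infinity $(0{:}\cdots{:}0{:}1)$ and $(1{:}0{:}\cdots{:}0)$, so that $V(x_0,x_n)\cap\mC=\emptyset$. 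A monomial $\bx^{\bfy}$ with $\bfy\in\mS$ lies in the ideal $(x_0,x_n)$ of $k[\mS]$ exactly when $\bfy-\bfa_0\in\mS$ or $\bfy-\bfa_n\in\mS$, so $\dim_k\bigl(k[\mS]/(x_0,x_n)\bigr)=|\AP_{\mS}|$; and, $\{x_0,x_n\}$ being a homogeneous system of parameters of length $2$, one has $\dim_k\bigl(k[\mS]/(x_0,x_n)\bigr)\ge\deg\mC=d$, with equality \iff{} $k[\mC]$ is \cm. This gives (a)$\Leftrightarrow$ $|\AP_{\mS}|=d$.

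For the combinatorial equivalences I would introduce the overmonoid $\mS':=\{(y_1,y_2)\in\N^2:y_1\in\mS_1,\ y_2\in\mS_2,\ y_1+y_2\equiv 0\pmod{d}\}$. Since the coordinate projections of $\mS$ are $\mS_1$ and $\mS_2$ and every generator $\bfa_i$ has coordinate sum $d$, one has $\mS\subseteq\mS'$; and each element of $\mS'$ whose residue $i$ satisfies $1\le i<d$ can be written $(r_i,t_{d-i})+m_1\bfa_n+m_2\bfa_0$ with $m_1,m_2\in\N$ (while the residue-$0$ elements of $\mS'$ always lie in $\mS$), so $\mS=\mS'$ \iff{} $(r_i,t_{d-i})\in\mS$ for all $1\le i<d$, i.e.\ $\mS=\mS'$ \iff{} (d). If $\mS=\mS'$, then each $(r_i,t_{d-i})$ lies in $\AP_{\mS}$, because subtracting $\bfa_0$ (resp.\ $\bfa_n$) would put $t_{d-i}-d$ in $\mS_2$ (resp.\ $r_i-d$ in $\mS_1$), contradicting $t_{d-i}\in\Ap_2$ (resp.\ $r_i\in\Ap_1$); together with $(0,0)$ this is the $d$-element set of (c), and (c)$\Rightarrow$(b) is immediate. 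For (b)$\Rightarrow$(d), for $y_1\in\mS_1$ I would set $m(y_1):=\min\{y_2:(y_1,y_2)\in\mS\}$, so that $m(y_1)\equiv -y_1\pmod d$, the fibre of the first projection over $y_1$ equals $m(y_1)+d\N$, and $m(y_1)\le m(y_1-d)$ whenever $y_1-d\in\mS_1$; a short case analysis then yields $|\AP_{\mS}|=d+\#\{y_1\in\mS_1:y_1-d\in\mS_1\text{ and }m(y_1)<m(y_1-d)\}$. Hence (b) forces $m$ to be constant on each residue class of $\mS_1$, so $m(y_1)=m(r_i)$ for $y_1\equiv i$. Running the same argument for the dual sequence $0<d-a_{n-1}<\cdots<d-a_1<d$, whose monoid is $\mS$ with the two coordinates swapped, shows likewise that $z\mapsto\min\{y:(y,z)\in\mS\}$ is constant on residue classes of $\mS_2$; comparing the two functions and chasing $(r_i,m(r_i))\in\mS$ through the second coordinate then gives $m(r_i)=t_{d-i}$, which is (d).

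It remains to deduce (e) and (f) from (a). Condition (e) is the monoid translation of the statement that the images of $x_0,x_n$ form a regular sequence on $k[\mS]$: as $k[\mC]$ is a domain, $x_n$ is a nonzerodivisor, and --- regular sequences of forms being permutable --- $k[\mC]$ is \cm{} \iff{} $x_0$ is a nonzerodivisor on $k[\mS]/(x_n)$, whose monomial $k$-basis is indexed by $\{\bfy\in\mS:\bfy-\bfa_n\notin\mS\}$; injectivity of multiplication by $x_0$ on this basis reads, after the substitution $\bfs=\bfy-\bfa_n$, exactly as (e) (one checks that the hypotheses of (e) force $\bfs\in\N^2$). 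Condition (f) says that $\mC$ is \acm{} \iff{} $x_n$ is a nonzerodivisor modulo $\ini_>(I_{\mA_1})$ --- equivalently, for a monomial ideal, that $x_n$ divides no minimal generator --- and this is a standard consequence of the relationship between degrevlex Gr\"obner bases of $I_{\mA_1}$ ($x_n$ being the smallest variable) and the arithmetic Cohen-Macaulayness of its projective closure; here I would cite \cite{CN}, \cite{GSW} or \cite{HerzogStamate2019}, where several of these equivalences already appear.

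The delicate point in this plan is the end of the second paragraph: upgrading ``$m$ is eventually constant along each residue class of $\mS_1$'' (which (b) gives at once) to ``$m$ attains the minimal value $t_{d-i}$ dictated by $\mS_2$''. The cleanest route I see is to carry the first- and second-coordinate analyses in parallel and play the two against each other, using that $\AP_{\mS}$ is interchanged with its coordinate swap; everything else is either classical or a routine verification.
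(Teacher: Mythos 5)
Your proposal is correct, but it takes a genuinely different route from the paper, because the paper does not prove this proposition at all: it is stated as a recollection of known characterizations, with pointers to \cite{CG}, \cite{CN}, \cite{GSW} and \cite{HerzogStamate2019}. You instead give a self-contained proof of (a)--(e): (a)$\Leftrightarrow$(b) by the length--multiplicity criterion for the linear system of parameters $x_0,x_n$ on the two-dimensional ring $k[\mC]$, together with the observation that $k[\mS]/(x_0,x_n)$ has monomial $k$-basis indexed by $\AP_\mS$ and that $\deg\mC=d$; (d) is equivalent to $\mS=\mS'$ and yields (c), which gives (b); (b)$\Rightarrow$(d) via the fibre functions; and (a)$\Leftrightarrow$(e) as the monomial reading of ``$x_0$ is a nonzerodivisor on $k[\mS]/(x_n)$''. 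I checked the two points you leave terse and both close. Your counting formula $|\AP_\mS|=d+\#\{y_1\in\mS_1 : y_1-d\in\mS_1,\ m(y_1)<m(y_1-d)\}$ is correct, since an element of $\AP_\mS$ must be of the form $(y_1,m(y_1))$ and the condition of not dropping by $\bfa_n$ splits exactly into the two displayed cases. The final chase works as you indicate: applying the same count to the coordinate-swapped monoid (the homogenization of the dual sequence, whose Apery set is the swap of $\AP_\mS$, hence also of cardinality $d$ under (b)) makes $m_2(z):=\min\{y:(y,z)\in\mS\}$ constant on residue classes of $\mS_2$; since $m(r_i)\in\mS_2$ lies in the class of $t_{d-i}$, one gets $m_2(t_{d-i})=m_2(m(r_i))\leq r_i$, while $m_2(t_{d-i})\geq r_i$ because it is an element of $\mS_1$ congruent to $i$ modulo $d$; hence $m_2(t_{d-i})=r_i$ and $(r_i,t_{d-i})\in\mS$, which is (d). When writing this up, make explicit the inclusion $\AP_\mS\subseteq\{(0,0)\}\cup\{(r_i,t_{d-i}) \mid 1\leq i<d\}$ in (d)$\Rightarrow$(c): it follows from your decomposition of the elements of $\mS'$ as $(r_i,t_{d-i})+m_1\bfa_n+m_2\bfa_0$ (or $m_1\bfa_n+m_2\bfa_0$ in residue $0$), since once $(r_i,t_{d-i})\in\mS$, membership in $\AP_\mS$ forces $m_1=m_2=0$. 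Finally, for (a)$\Leftrightarrow$(f) you fall back on the literature, which is exactly what the paper does for the whole proposition; so your write-up is, if anything, more complete than the paper's own text, at the price of reproving facts the authors deliberately only recall.
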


For $i=1,2$, one can consider the order relation $\leq_i$ in $\mS_i$ given by $y\leq_i z \Longleftrightarrow z-y\in \mS_i$. Similarly, in $\mS$ one can consider the order relation $\leq_\mS$ defined by $\bfy \leq_\mS \bfz \Longleftrightarrow \bfz-\bfy \in \mS$. 
In order to compare $\beta_i(k[\mC])$ and $\beta_i(k[\mC_1])$ for all $i$, we will relate in Theorem~\ref{thm:BettiPosets} the posets $(\Ap_1,\leq_1)$ and $(\AP_\mS,\leq_\mS)$, i.e., the Apery sets $\Ap_1 \subset \mS_1$ and $\AP_\mS \subset \mS$ with the natural poset structure they inherit from $(\mS_1,\leq_1)$ and $(\mS,\leq_\mS)$, respectively. \newline

Since $\mS \subset \mS_1 \times \mS_2$, it follows that if $(y_1,y_2) \leq_\mS (z_1,z_2)$, then $y_i \leq_i z_i$ for $i=1,2$. Using Proposition~\ref{prop:caractCM}, one can prove that the poset structure of $(\AP_{\mS}, \leq_{\mS})$ is completely determined by those of $(\Ap_1,\leq_1)$ and $(\Ap_2,\leq_2)$ when $\mC$ is \acm{}.

\begin{proposition} \label{pr:ordeninterseccion}  
If $\mC$ is \acm{}, then for all $(y_1,y_2), (z_1,z_2) \in \AP_\mS$, 
\[ (y_1,y_2) \leq_{\mS} (z_1,z_2) \ \Longleftrightarrow \ y_1 \leq_1 z_1 \text{ and } y_2 \leq_2 z_2. \]
\end{proposition}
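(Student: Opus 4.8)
The forward implication is immediate and does not require the Cohen-Macaulay hypothesis: if $(z_1,z_2)-(y_1,y_2)=\bfz-\bfy\in\mS\subset\mS_1\times\mS_2$, then writing $\bfz-\bfy=(z_1-y_1,z_2-y_2)$ as an $\N$-combination of the $\bfa_i$ and projecting onto the first and second coordinates shows $z_1-y_1\in\mS_1$ and $z_2-y_2\in\mS_2$, i.e., $y_1\leq_1 z_1$ and $y_2\leq_2 z_2$. So the whole content is in the reverse implication.

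For the reverse implication, the plan is to use the structural description of $\AP_\mS$ given by Proposition~\ref{prop:caractCM}. Since $\mC$ is \acm{}, part~\ref{prop:caractCMc} tells us that every element of $\AP_\mS$ other than $(0,0)$ has the form $(r_i,t_{d-i})$ for a unique $i\in\{1,\dots,d-1\}$; in particular, the two coordinates of an element of $\AP_\mS$ determine each other, and the first coordinate lies in $\Ap_1$ while the second lies in $\Ap_2$. So suppose $(y_1,y_2),(z_1,z_2)\in\AP_\mS$ with $y_1\leq_1 z_1$ and $y_2\leq_2 z_2$; I want to show $\bfz-\bfy\in\mS$. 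Set $\bfs:=\bfz-\bfy=(z_1-y_1,\,z_2-y_2)\in\Z^2$. The idea is to verify the hypothesis of Proposition~\ref{prop:caractCM}\ref{prop:caractCMe}, namely that $\bfs+\bfa_0\in\mS$ and $\bfs+\bfa_n\in\mS$, and then conclude $\bfs\in\mS$ directly from that characterization.

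To check $\bfs+\bfa_0\in\mS$: note $\bfs+\bfa_0=(z_1-y_1,\,z_2-y_2+d)$. I would argue this coordinatewise lands in the right congruence class and semigroups. Since $y_1\leq_1 z_1$, we have $z_1-y_1\in\mS_1$; since $y_2\leq_2 z_2$, we have $z_2-y_2\in\mS_2$, so $z_2-y_2+d\in\mS_2$ as well. Moreover $(z_1-y_1)+(z_2-y_2+d)\equiv (z_1+z_2)-(y_1+y_2)\pmod d$, and because $(z_1,z_2),(y_1,y_2)\in\AP_\mS$ satisfy $z_1+z_2\equiv 0$ and $y_1+y_2\equiv 0\pmod d$ (they are of the form $(r_i,t_{d-i})$, or $(0,0)$), this sum is $\equiv 0\pmod d$. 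Now I still need to know that the first coordinate $z_1-y_1$ lies in $\Ap_1$ and the second in $\Ap_2$ — equivalently that I may take the \emph{canonical} representatives — so that Proposition~\ref{prop:caractCM}\ref{prop:caractCMd} applies and gives $\bfs+\bfa_0\in\mS$. This is the delicate point: a priori $z_1-y_1$ need not be the minimal element of its residue class in $\mS_1$. The fix is to replace $\bfs+\bfa_0$ by a suitable element: if $z_1-y_1\notin\Ap_1$, subtract $d$ from the first coordinate and the needed amount is absorbed because $\bfa_0+\text{(multiple of }(d,-d)\text{-type moves)}$... more cleanly, I would instead invoke Proposition~\ref{prop:caractCM}\ref{prop:caractCMe} applied to $\bfs$ itself by exhibiting $\bfs+\bfa_0$ and $\bfs+\bfa_n$ as honest elements of $\mS$, using \ref{prop:caractCMd} after reducing each coordinate to its Apery representative and checking the discarded multiples of $d$ are supplied by the $\bfa_0$, $\bfa_n$ we added. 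The symmetric computation handles $\bfs+\bfa_n=(z_1-y_1+d,\,z_2-y_2)$.

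The main obstacle, as indicated above, is the bookkeeping needed to pass between an arbitrary element $z_i-y_i$ of $\mS_i$ and its Apery representative modulo $d$, and to confirm that the ``extra'' copies of $d$ one adds or removes can be realized inside $\mS$ using $\bfa_0=(0,d)$ and $\bfa_n=(d,0)$; once this is set up correctly, Proposition~\ref{prop:caractCM}\ref{prop:caractCMd} and \ref{prop:caractCMe} do all the work and no Gröbner-basis or resolution input is needed. I expect the cleanest writeup to phrase everything in terms of checking $\bfs+\bfa_0,\bfs+\bfa_n\in\mS$ and then quoting \ref{prop:caractCMe}, with \ref{prop:caractCMd} used as the tool to certify membership of those two shifted elements.
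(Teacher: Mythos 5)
Your forward implication and your overall plan are fine, but the proof as written has a genuine gap: the step you yourself flag as ``the main obstacle'' --- passing from $s_i:=z_i-y_i\in\mS_i$ to an Apery representative and accounting for the discarded multiples of $d$ --- is never carried out, and the sentence sketching the fix trails off. The observation that closes this gap (and is exactly what the paper uses) is that no reduction is needed at all: $s_1$ and $s_2$ are \emph{automatically} Apery elements. Indeed, if $s_1-d\in\mS_1$, then $z_1-d=y_1+(s_1-d)\in\mS_1$, contradicting $z_1\in\Ap_1$ (which you already extracted from $(z_1,z_2)\in\AP_\mS$ via Proposition~\ref{prop:caractCM}~\ref{prop:caractCMc}); the same argument gives $s_2\in\Ap_2$. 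Since moreover $s_1+s_2=(z_1+z_2)-(y_1+y_2)\equiv 0\pmod d$, Proposition~\ref{prop:caractCM}~\ref{prop:caractCMd} applies directly to the pair $(s_1,s_2)$ and yields $\bfs\in\mS$; there is no need to shift by $\bfa_0$ or $\bfa_n$, and characterization \ref{prop:caractCMe} plays no role.

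For completeness: the bookkeeping route you gesture at can also be made to work, but it simplifies past the point where \ref{prop:caractCMe} is useful. Writing $s_1=r+kd$ and $s_2=t+\ell d$ with $r\in\Ap_1$, $t\in\Ap_2$ and $k,\ell\geq 0$ (the Apery representative is the minimum of its class inside the semigroup, so the multiples are nonnegative), one has $r+t\equiv 0\pmod d$, hence $(r,t)\in\mS$ by \ref{prop:caractCMd}, and then $\bfs=(r,t)+k\bfa_n+\ell\bfa_0\in\mS$ outright. So even on your route the detour through $\bfs+\bfa_0$, $\bfs+\bfa_n$ and \ref{prop:caractCMe} is superfluous; but since you stopped before verifying any of this, the argument you submitted is not yet a proof.
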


\begin{proof}
As observed before stating the proposition, $(\Rightarrow)$ always holds. Let us prove $(\Leftarrow)$ when $\mC$ is \acm. Since $(y_1,y_2),(z_1,z_2) \in \AP_\mS$, one has that $y_1,z_1\in \Ap_1$, $y_2,z_2 \in \Ap_2$ by Proposition~\ref{prop:caractCM}~\ref{prop:caractCMc}, and $y_1+y_2 \equiv z_1+z_2 \equiv 0 \pmod{d}$. Assume that $y_1\leq_1 z_1$ and $y_2 \leq_2 z_2$, then $s_1:=z_1-y_1 \in \mS_1$ and $s_2:=z_2-y_2 \in \mS_2$. Moreover, $s_i \in \Ap_i$ for $i=1,2$; otherwise, $z_i \notin \Ap_i$. Since $s_1+s_2 = z_1+z_2-y_1-y_2 \equiv 0 \pmod{d}$, then $(s_1,s_2) \in \mS$ by Proposition~\ref{prop:caractCM}~\ref{prop:caractCMd}, and we are done.
\end{proof}

Let us recall now some notions about posets that will be used in the sequel for the posets $(\Ap_1,\leq_1)$, $(\Ap_2,\leq_2)$ and $(\AP_\mS,\leq_\mS)$.

\begin{definition}
Let $(P,\leq)$ be a finite poset.
\begin{enumerate}[(a)]
    \item For $y,z\in P$, we say that {\it $z$ covers $y$}, and denote it by $y\prec z$, if $y<z$ and there is no $w\in P$ such that $y<w<z$.
    \item We say that $P$ is \textit{graded} if there exists a function $\rho:P\rightarrow \N$, called  {\it rank function}, 
    such that $\rho(z)=\rho(y)+1$ whenever $y \prec z$. 
\end{enumerate}
\end{definition}

The following result shows that the poset $(\AP_\mS,\leq_\mS)$ is always graded while $(\Ap_1,\leq_1)$ may be graded or not. Observe that, since $(\Ap_1,\leq_1)$ has a minimum element which is $0$, whenever it is graded, the corresponding rank function is completely determined by the value of the rank function at $0$ that we will fix to be $0$. In the following proposition, we characterize the covering relation in $\Ap_1$ and in $\AP_\mS$, and describe the rank functions of $(\AP_\mS,\leq_\mS)$, and of $(\Ap_1,\leq_1)$ when it is graded.

\begin{proposition} \label{prop:CoverRel&RankFunct}
\begin{enumerate}[label=(a.\arabic*)]
    \item\label{prop:CoverRel_a1}  For all $y,z\in \Ap_1$, $y \prec_1 z$ $\Longleftrightarrow$ $z=y+a_i$ for some $a_i \in \MSG{\mS_1}\setminus\{a_n\}$.
    \item\label{prop:CoverRel_a2} $\Ap_1$ is graded \iff{}, for all $y\in \Ap_1$, every way of writing $y$ as a sum of elements in $\MSG{\mS_1}$ has the same number of summands. When it is graded, the rank function $\rho_1:\Ap_1\rightarrow \N$ is given by that number of summands.
\end{enumerate}
\begin{enumerate}[label=(b.\arabic*)]
    \item\label{prop:CoverRel_b1}  For all $\bfy=(y_1,y_2),\bfz=(z_1,z_2)\in \AP_\mS$, $\bfy \prec_\mS \bfz$ $\Longleftrightarrow$ $\bfz=\bfy+\bfa_i$ for some $i \in \{1,\ldots,n-1\}$.
    \item\label{prop:CoverRel_b2}  $\AP_\mS$ is graded by the rank function $\rho:\AP_\mS \rightarrow \N$ defined by $\rho(y_1,y_2):= (y_1+y_2)/d$.
\end{enumerate} 
\end{proposition}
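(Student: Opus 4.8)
The plan is to prove the four assertions essentially in the order stated, exploiting the fact that both $\Ap_1$ and $\AP_\mS$ have a minimum element ($0$ and $(0,0)$ respectively) and that membership in these Apery sets is stable under subtracting generators, as long as one does not leave the set.

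\textbf{Proof of \ref{prop:CoverRel_a1}.} First I would observe that for $y,z\in\Ap_1$ with $y<_1 z$, the element $z-y$ lies in $\mS_1$, hence is a nonnegative combination of elements of $\MSG{\mS_1}$; writing $z-y = a_{i_1}+\cdots+a_{i_s}$ with $s\geq 1$, the chain $y <_1 y+a_{i_1} <_1 y+a_{i_1}+a_{i_2} <_1 \cdots <_1 z$ shows that if $z$ covers $y$ then $s=1$, i.e.\ $z=y+a_i$ for some generator $a_i$. It remains to rule out $a_i = a_n = d$: but $y+d\notin\mS_1$ would fail the Apery condition only if $y+d\in\mS_1$, and indeed $z = y+d$ would satisfy $z-d = y\in\mS_1$, contradicting $z\in\Ap_1$. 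Hence $a_i\in\MSG{\mS_1}\setminus\{a_n\}$. Conversely, if $z=y+a_i$ with $a_i$ a generator $\neq a_n$ and $y,z\in\Ap_1$, suppose $y<_1 w<_1 z$; then $w-y$ and $z-w$ are nonzero elements of $\mS_1$ summing to $a_i$, which is impossible since $a_i$ is a minimal generator (it cannot be written as a sum of two positive elements of $\mS_1$). So $y\prec_1 z$.

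\textbf{Proof of \ref{prop:CoverRel_a2}.} Assume $\Ap_1$ is graded with rank function $\rho_1$ normalized by $\rho_1(0)=0$. By \ref{prop:CoverRel_a1}, every covering step adds a generator $\neq a_n$, and by the argument above every expression of $y\in\Ap_1$ as a sum of $s$ generators yields a saturated chain of length $s$ from $0$ to $y$ (no summand can be $a_n$, since a partial sum plus $a_n$ would leave $\Ap_1$); hence $\rho_1(y)=s$ for every such expression, so all of them have the same number of summands. Conversely, if every way of writing each $y\in\Ap_1$ as a sum of generators has the same number $\rho_1(y)$ of summands, this $\rho_1$ is well-defined, and whenever $y\prec_1 z$ we have $z=y+a_i$ by \ref{prop:CoverRel_a1}, so appending $a_i$ to any minimal expression of $y$ gives one for $z$, yielding $\rho_1(z)=\rho_1(y)+1$. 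Thus $\rho_1$ is a rank function. The only subtlety here is that a sum-of-generators expression of $y$ might a priori pass through elements outside $\Ap_1$; but as noted, for $y\in\Ap_1$ no partial sum can involve $a_n=d$ (else the total would be $\geq d$ with the tail in $\mS_1$, forcing $y-d\in\mS_1$ after removing that $a_n$ — more precisely, if $a_n$ occurs we could cancel it against the rest), so in fact all partial sums stay in $\Ap_1$; I would spell this cancellation argument out carefully as it is the one genuinely delicate point.

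\textbf{Proof of \ref{prop:CoverRel_b1} and \ref{prop:CoverRel_b2}.} For \ref{prop:CoverRel_b1}, I would argue exactly as in \ref{prop:CoverRel_a1}: if $\bfy<_\mS\bfz$ then $\bfz-\bfy=\bfa_{i_1}+\cdots+\bfa_{i_s}\in\mS$ with $s\geq 1$, the partial sums give a chain from $\bfy$ to $\bfz$ inside $\AP_\mS$ (using that $\AP_\mS$ is closed under adding any $\bfa_i$ without leaving it, by its very definition $\bfv-\bfa_0\notin\mS$, $\bfv-\bfa_n\notin\mS$), so a covering forces $s=1$; and since adding $\bfa_0$ or $\bfa_n$ to an element of $\AP_\mS$ produces an element not in $\AP_\mS$, we get $i\in\{1,\ldots,n-1\}$. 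Conversely $\bfz=\bfy+\bfa_i$, $i\in\{1,\dots,n-1\}$, is a cover because $\bfa_i$ cannot be split as a sum of two nonzero elements of $\mS\subset\N^2$ having a given fixed coordinate-sum $d$ — indeed any nonzero element of $\mS$ has coordinate-sum a positive multiple of $d$, so $\bfa_i$ (coordinate-sum $d$) is $\leq_\mS$-minimal among nonzero elements and admits no strict intermediate. This last observation also proves \ref{prop:CoverRel_b2}: the function $\rho(y_1,y_2)=(y_1+y_2)/d$ takes values in $\N$ because every element of $\mS$, in particular of $\AP_\mS$, has coordinate-sum divisible by $d$; and if $\bfy\prec_\mS\bfz$ then $\bfz=\bfy+\bfa_i$ with $\bfa_i$ of coordinate-sum $d$, so $\rho(\bfz)=\rho(\bfy)+1$.

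\textbf{Main obstacle.} The only non-formal ingredient is the claim, used throughout, that a sum-of-generators expression of an element of $\Ap_1$ (or $\AP_\mS$) has all its partial sums again in $\Ap_1$ (resp.\ $\AP_\mS$); equivalently, that $a_n=d$ (resp.\ $\bfa_0,\bfa_n$) never appears in any minimal-length generator expression of an Apery element. I expect to handle this by a direct cancellation: if $y\in\Ap_1$ and $y=a_n+w$ with $w\in\mS_1$, then $y-d=w\in\mS_1$, contradicting $y\in\Ap_1$; hence $a_n$ occurs in no generator expression of $y$ at all, and every partial sum of such an expression, being of the same form (an Apery-type obstruction is inherited), stays in $\Ap_1$. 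The $\AP_\mS$ case is the $\N^2$-analogue using the grading by coordinate-sum, which makes the argument clean since $\bfa_0$ and $\bfa_n$ are precisely the generators one must avoid and any occurrence of them can be detected and cancelled modulo $d$ in the appropriate coordinate.
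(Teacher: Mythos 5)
Your proposal is correct and follows essentially the same route as the paper: write the difference of two comparable Apery elements as a sum of (minimal) generators, use that the Apery obstruction is inherited by the intermediate partial sums (because the top element lies in $\Ap_1$, resp.\ $\AP_\mS$) to force a cover to add exactly one generator different from $a_n$ (resp.\ with index in $\{1,\ldots,n-1\}$), and then read off the gradedness statements. The only slip is the parenthetical claim that $\AP_\mS$ is ``closed under adding any $\bfa_i$'', which is false as stated; what you actually need, and do prove correctly in your final paragraph, is that any $\mS$-summand of an element of $\AP_\mS$ (resp.\ of $\Ap_1$) is again in $\AP_\mS$ (resp.\ $\Ap_1$).
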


\begin{proof}
In \ref{prop:CoverRel_a1} and \ref{prop:CoverRel_b1}, $(\Leftarrow)$ is trivial. Let us prove $(\Rightarrow)$. 
\begin{enumerate}
    \item[\ref{prop:CoverRel_a1}] Consider $y,z\in \Ap_1$ such that $y\prec_1 z$. Since $z-y\in \mS_1$, there exists $\alpha=(\alpha_1,\ldots,\alpha_n) \in \N^n$ such that $z=y+\sum_{i=1}^n \alpha_i a_i$, and $\alpha_n=0$ because $z\in \Ap_1$. If $|\alpha| >1$, then there exists $j \in \{1,\ldots,n-1\}$ such that $\alpha_j \neq 0$ and $y+a_j\neq z$. Thus, $y+a_j \in \Ap_1$ because $z\in \Ap_1$, and $y <_1 y+a_j <_1 z$, a contradiction because $y\prec_1 z$, so $|\alpha| = 1$.
    \item[\ref{prop:CoverRel_b1}] Consider $\bfy,\bfz \in \AP_\mS$ such that $\bfy \prec_\mS \bfz$. Since $\bfz-\bfy \in \mS$, there exists $\alpha=(\alpha_0,\ldots,\alpha_n) \in \N^{n+1}$ such that $\bfz-\bfy = \sum_{i=0}^n \alpha_i \bfa_i$, and $\alpha_0=\alpha_n = 0$ because $\bfz \in \AP_\mS$. Again, if $|\alpha| = \sum_{i=1}^{n-1} \alpha_i > 1$, we can choose any $\alpha_j \neq 0$ and get $\bfy <_\mS \bfy+\bfa_j <_\mS \bfz$ (one has that $\bfy+\bfa_j \in \AP_\mS$ because $\bfz \in \AP_\mS$), a contradiction because $\bfy \prec_\mS \bfz$.
\end{enumerate}
Now \ref{prop:CoverRel_a2} and \ref{prop:CoverRel_b2} are direct consequences of \ref{prop:CoverRel_a1} and \ref{prop:CoverRel_b1}, respectively.
\end{proof}

\begin{remark} \label{rem:Fiber(1)}
By Proposition~\ref{prop:CoverRel&RankFunct}~\ref{prop:CoverRel_b2}, the fiber of $1$ under the rank function $\rho$ is $\rho^{-1}(1) = \{\bfa_i: 1\leq i \leq n-1\}$, and hence $|\rho^{-1}(1)| = n-1$.
On the other hand, when $\Ap_1$ is graded, the fiber of $1$ under $\rho_1$ is $\rho_1^{-1}(1) = \MSG{\mS_1} \setminus \{a_n\}$ by Proposition~\ref{prop:CoverRel&RankFunct}~\ref{prop:CoverRel_a1}.
\end{remark}

Set $\mA_1' := \MSG{\mS_1} \setminus \{a_n\}$ and $\Ap_1^{(s)} := \Ap_1 \cap s\mA_1'$ for each $s\in \N$. 
Since $\Ap_1$ is finite, consider $N:=\max\{s\in \N: \Ap_1^{(s)} \neq \emptyset\} \in \N$.
As a direct consequence of Proposition~\ref{prop:CoverRel&RankFunct}~\ref{prop:CoverRel_a2}, we get a characterization of the graded property for $(\Ap_1,\leq_1)$. 

\begin{corollary}
$(\Ap_1,\leq_1)$ is graded if and only if $\sum_{s=0}^N |\Ap_1^{(s)}| = d$.
\end{corollary}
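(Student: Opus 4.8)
The plan is to read off the statement from Proposition~\ref{prop:CoverRel&RankFunct}~\ref{prop:CoverRel_a2} after establishing that the sets $\Ap_1^{(s)}$ always give a lower bound and that they partition $\Ap_1$ exactly when the poset is graded. First I would observe that for every $y\in\Ap_1$ there is at least one expression $y=\sum_{i}\alpha_i a_i$ with $a_i\in\MSG{\mS_1}$, and since $y\in\Ap_1$ forces $\alpha_n=0$ (otherwise $y-a_n=y-d\in\mS_1$), in fact $y\in s\mA_1'$ for $s=|\alpha|$; hence $y\in\Ap_1^{(s)}$ for at least one $s\le N$, so $\bigcup_{s=0}^{N}\Ap_1^{(s)}=\Ap_1$ and therefore $\sum_{s=0}^N|\Ap_1^{(s)}|\ge |\Ap_1|=d$, with equality if and only if the sets $\Ap_1^{(0)},\ldots,\Ap_1^{(N)}$ are pairwise disjoint.

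Next I would translate ``pairwise disjoint'' into the condition appearing in Proposition~\ref{prop:CoverRel&RankFunct}~\ref{prop:CoverRel_a2}. An element $y\in\Ap_1$ lies in $\Ap_1^{(s)}$ precisely when $y$ can be written as a sum of exactly $s$ elements of $\MSG{\mS_1}\setminus\{a_n\}$, and (again using $\alpha_n=0$ for Apery elements) this is the same as saying $y$ has some expression as a sum of $s$ elements of $\MSG{\mS_1}$. So the sets $\{\Ap_1^{(s)}\}_s$ are pairwise disjoint if and only if no $y\in\Ap_1$ admits two expressions as a sum of elements of $\MSG{\mS_1}$ with different numbers of summands; by Proposition~\ref{prop:CoverRel&RankFunct}~\ref{prop:CoverRel_a2}, this is exactly the condition that $(\Ap_1,\leq_1)$ is graded. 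Combining with the previous paragraph, $(\Ap_1,\leq_1)$ is graded $\iff$ the $\Ap_1^{(s)}$ partition $\Ap_1$ $\iff$ $\sum_{s=0}^N|\Ap_1^{(s)}|=d$.

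The only point requiring a little care — and the likely ``obstacle,'' though a mild one — is to make sure the bookkeeping about $a_n$ is handled correctly in both directions: one must use that any element of $\Ap_1$ has \emph{every} one of its $\MSG{\mS_1}$-expressions free of the generator $a_n$, so that passing between $\mA_1'$-sums and $\MSG{\mS_1}$-sums does not change the count of summands and does not lose any element. Once that is in place, the equivalence is immediate, and I would simply cite Proposition~\ref{prop:CoverRel&RankFunct}~\ref{prop:CoverRel_a2} for the characterization of gradedness in terms of the number of summands. No induction or delicate estimate is needed; the corollary is essentially a reformulation.
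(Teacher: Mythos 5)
Your argument is correct and is exactly the intended one: the paper presents this corollary as a direct consequence of Proposition~\ref{prop:CoverRel&RankFunct}~\ref{prop:CoverRel_a2}, and your covering/disjointness reformulation (using $|\Ap_1|=d$ and the fact that no $\MSG{\mS_1}$-expression of an Apery element can involve $a_n$) is precisely the bookkeeping that makes that implication explicit. No gaps.
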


\section{Betti numbers of affine and projective monomial curves} \label{sec:BettiNumbers}

Recall that $I_{\mA_1} \subset k[x_1,\ldots,x_n]$ and $I_\mA \subset k[x_0,\ldots,x_n]$ are
the vanishing ideals of $\mC_1$ and $\mC$ respectively. 
When $\mC$ is arithmetically Cohen-Macaulay, ${\rm pd}(k[\mC]) = {\rm pd}(k[\mC_1])$.
Moreover, by Proposition~\ref{prop:APS_Ap1_isomorphic}~\ref{prop:caractCMb}, in this case, one has that $|\AP_\mS|=|\Ap_1|=d$. The main result in this section is Theorem~\ref{thm:BettiPosets} where we give a sufficient condition in terms of the poset structures of the Apery sets $\Ap_1$ and $\AP_\mS$ for the Betti sequences of $k[\mC_1]$ and $k[\mC]$ to coincide. We postpone the proof after Propositions \ref{prop:APS_Ap1_isomorphic} and \ref{prop:Ap1graded_Sengupta}.

\begin{theorem} \label{thm:BettiPosets}
If $(\AP_\mS, \leq_{\mS}) \simeq (\Ap_1, \leq_1)$, then $\beta_i(k[\mC]) = \beta_i(k[\mC_1])$ for all $i$.
\end{theorem}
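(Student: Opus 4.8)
The plan is to cut $k[\mC]$ and $k[\mC_1]$ down to zero-dimensional quotients along regular sequences of variables, and then promote the poset isomorphism of the hypothesis to an isomorphism of the resulting Artinian algebras. First I would note that $\Ap_1$ is a complete set of residues modulo $d$, so $|\Ap_1|=d$; hence the hypothesis forces $|\AP_\mS|=d$ and, by Proposition~\ref{prop:caractCM}, $\mC$ is \acm{}, and in particular ${\rm pd}(k[\mC])={\rm pd}(k[\mC_1])$. For an \acm{} projective monomial curve, $x_0,x_n$ is a regular sequence on $k[\mC]$ (see Proposition~\ref{prop:caractCM}~\ref{prop:caractCMe}), and $x_n$, which maps to $t^d\neq 0$, is a nonzerodivisor on the domain $k[\mC_1]$. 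Setting $R:=k[x_1,\dots,x_{n-1}]$, $B:=k[\mC]/(x_0,x_n)$ and $B_1:=k[\mC_1]/(x_n)$, tensoring the minimal graded free resolutions of $k[\mC]$ and $k[\mC_1]$ by the corresponding quotient rings keeps them minimal (the entries of the differentials stay in the homogeneous maximal ideal), so $\beta_i(k[\mC])=\beta_i^R(B)$ and $\beta_i(k[\mC_1])=\beta_i^R(B_1)$ for all $i$. It therefore suffices to prove that $B$ and $B_1$ are isomorphic as $R$-modules.

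By the definition of the Apery sets, $B=\bigoplus_{\bfy\in\AP_\mS}k\,t^{\bfy}$ and $B_1=\bigoplus_{y\in\Ap_1}k\,t^{y}$ as $k$-vector spaces, and for $i\in\{1,\dots,n-1\}$ the variable $x_i$ acts by $x_i\cdot t^{\bfy}=t^{\bfy+\bfa_i}$ if $\bfy+\bfa_i\in\AP_\mS$ and $x_i\cdot t^{\bfy}=0$ otherwise, and likewise $x_i\cdot t^{y}=t^{y+a_i}$ if $y+a_i\in\Ap_1$ and $0$ otherwise. Since $(\AP_\mS,\leq_\mS)$ is graded (Proposition~\ref{prop:CoverRel&RankFunct}~\ref{prop:CoverRel_b2}) and gradedness is preserved by poset isomorphism, $(\Ap_1,\leq_1)$ is graded; as a poset isomorphism preserves ranks, Remark~\ref{rem:Fiber(1)} gives $|\MSG{\mS_1}\setminus\{a_n\}|=n-1$, and since $\MSG{\mS_1}\subseteq\{a_1,\dots,a_n\}$ this forces $\{a_1,\dots,a_{n-1}\}=\MSG{\mS_1}\setminus\{a_n\}$; in particular each $a_i$ with $i\leq n-1$ is a minimal generator of $\mS_1$. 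I would then invoke Propositions~\ref{prop:APS_Ap1_isomorphic} and~\ref{prop:Ap1graded_Sengupta} (or argue directly: the first-coordinate projection $\pi_1\colon\AP_\mS\to\Ap_1$, $(y_1,y_2)\mapsto y_1$, is a rank- and cover-preserving bijection, by Proposition~\ref{prop:caractCM}~\ref{prop:caractCMc} together with the previous sentence, between posets with equally many covering relations, hence an isomorphism of Hasse diagrams, hence of posets) to conclude that the isomorphism of the hypothesis may be taken to be $\pi_1$, whose decisive feature is additivity in the first coordinate.

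With this in hand I would define $\Phi\colon B\to B_1$ by $\Phi(t^{\bfy}):=t^{\pi_1(\bfy)}$, a $k$-linear bijection, and verify $R$-linearity on a basis vector $t^{\bfy}$ against a variable $x_i$, $i\leq n-1$. If $\bfy+\bfa_i\in\AP_\mS$, then its first coordinate $\pi_1(\bfy)+a_i$ lies in $\Ap_1$ (Proposition~\ref{prop:caractCM}~\ref{prop:caractCMc}), so $\Phi(x_i\cdot t^{\bfy})=t^{\pi_1(\bfy)+a_i}=x_i\cdot\Phi(t^{\bfy})$ by additivity of $\pi_1$. If $\bfy+\bfa_i\notin\AP_\mS$, then $\Phi(x_i\cdot t^{\bfy})=0$ and I must show $\pi_1(\bfy)+a_i\notin\Ap_1$; otherwise, since $a_i\in\MSG{\mS_1}\setminus\{a_n\}$, Proposition~\ref{prop:CoverRel&RankFunct}~\ref{prop:CoverRel_a1} gives $\pi_1(\bfy)\prec_1\pi_1(\bfy)+a_i$, so applying the poset isomorphism $\pi_1^{-1}$ and Proposition~\ref{prop:CoverRel&RankFunct}~\ref{prop:CoverRel_b1} yields $\pi_1^{-1}(\pi_1(\bfy)+a_i)=\bfy+\bfa_m$ for some $m\in\{1,\dots,n-1\}$, and comparing first coordinates forces $a_m=a_i$, hence $m=i$ and $\bfy+\bfa_i\in\AP_\mS$, a contradiction. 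Thus $\Phi$ is an isomorphism of $R$-modules (indeed of graded $R$-algebras), so $\beta_i^R(B)=\beta_i^R(B_1)$ for all $i$, and the theorem follows.

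The step I expect to be the main obstacle is the last case distinction. A bare poset isomorphism is not enough: the increments $\bfa_i$ labelling the covering relations of $\AP_\mS$ are not intrinsic to the poset — distinct increments can realize the same cover — so one really needs the isomorphism to be the additive map $\pi_1$, and one needs the $\bfa_i$ with $i\leq n-1$ to be minimal generators so that ``$\bfy+\bfa_i\in\AP_\mS$'' records a covering relation. Establishing that any abstract isomorphism between the two posets must in fact be the coordinate projection, and that $\Ap_1$ is consequently graded, is precisely the combinatorial content I would isolate beforehand in Propositions~\ref{prop:APS_Ap1_isomorphic} and~\ref{prop:Ap1graded_Sengupta}.
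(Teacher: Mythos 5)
Your proof is correct, but it takes a genuinely different route from the paper's. The paper deduces Theorem~\ref{thm:BettiPosets} by translating the poset isomorphism, via Propositions~\ref{prop:APS_Ap1_isomorphic} and~\ref{prop:Ap1graded_Sengupta}, into the Gr\"obner-basis condition that $x_n$ appears in every non-homogeneous element of the reduced degrevlex Gr\"obner basis of $I_{\mA_1}$, and then invokes \cite[Thm.~4.1]{Sengupta2023}. You argue directly: the hypothesis gives $|\AP_\mS|=|\Ap_1|=d$, hence $\mC$ is \acm{} by Proposition~\ref{prop:caractCM}; Artinian reduction modulo the regular sequence $x_0,x_n$ on $k[\mC]$ and the nonzerodivisor $x_n$ on $k[\mC_1]$ preserves Betti numbers; and the first-coordinate projection $\pi_1$ --- which the proof of Proposition~\ref{prop:APS_Ap1_isomorphic} shows to be itself a poset isomorphism once Cohen--Macaulayness, gradedness of $\Ap_1$ and minimality of $a_1,\ldots,a_{n-1}$ are extracted from the hypothesis --- induces an explicit isomorphism $k[\mC]/(x_0,x_n)\simeq k[\mC_1]/(x_n)$ of modules over $k[x_1,\ldots,x_{n-1}]$. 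The one delicate implication, that $\pi_1(\bfy)+a_i\in\Ap_1$ forces $\bfy+\bfa_i\in\AP_\mS$, is exactly where you need the $a_i$ to be minimal generators (so that the step is a covering relation) and $\pi_1$ itself, not merely some abstract map, to be the isomorphism; you identify and handle this correctly. Comparing the two approaches: the paper's proof is a few lines but outsources the homological content to \cite{Sengupta2023}, whose proof is Gr\"obner-theoretic, whereas yours is self-contained and genuinely Gr\"obner-free (in effect it reproves the case of \cite[Thm.~4.1]{Sengupta2023} needed here) and yields slightly more, namely an isomorphism of the Artinian reductions. Two small points to make explicit in a write-up: exactness of the tensored resolutions uses regularity of $x_0,x_n$ on $k[\mC]$, which is where Cohen--Macaulayness enters (Proposition~\ref{prop:caractCM}~\ref{prop:caractCMe}); and since $B$ and $B_1$ carry different gradings, it is cleanest to conclude via $\beta_i=\dim_k {\rm Tor}_i(-,k)$, for which an ungraded module isomorphism already suffices.
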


Note that the converse of this result is wrong as the following example shows.

\begin{example} \label{ex:convthm2.1}  
For the sequence $1<2<4<8$, one can check using, e.g., \cite{Singular}, that both $k[\mC_1]$ and $k[\mC]$ are complete intersections with Betti sequence $(1,3,3,1)$. However, the posets $(\Ap_1,\leq_1)$ and $(\AP_\mS,\leq_\mS)$ are not isomorphic since $\leq_1$ is a total order on $\Ap_1$, while $\leq_\mS$ is not. 
More generally, for $a_1 = 1 < a_2 < \cdots < a_n = d$ with $a_i$ a divisor of $a_{i+1}$ for all $i \in \{1,\ldots,n-1\}$, one has that both $k[\mC_1]$ and $k[\mC]$ are complete intersections; see \cite[Theorem 5.3]{BGM}. Thus, both Betti sequences are $(1,\binom{n-1}{1},\ldots,\binom{n-1}{i},\ldots, \binom{n-1}{n-2}, 1)$. However, again the posets $(\Ap_1,\leq_1)$ and $(\AP_\mS,\leq_\mS)$ are not isomorphic since $\leq_1$ is a total order on $\Ap_1$, while $\leq_\mS$ is not.
\end{example}

\begin{proposition} \label{prop:APS_Ap1_isomorphic} 
The following two claims are equivalent:
\begin{enumerate}[(a)]
    \item The posets $(\Ap_1,\leq_1)$ and $(\AP_\mS , \leq_\mS)$ are isomorphic;
    \item\label{prop:APS_Ap1_isomorphic_b} $k[\mC]$ is Cohen-Macaulay, $(\Ap_1,\leq_1)$ is graded, and $\{a_1,\ldots,a_{n-1}\}$ is contained in the minimal system of generators of $\mS_1$.
\end{enumerate}
\end{proposition}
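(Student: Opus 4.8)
The plan is to prove the two implications separately; the delicate one is (b)$\Rightarrow$(a), where the isomorphism will be the projection onto the first coordinate and the crux is a rank-preservation property of that projection. For (a)$\Rightarrow$(b): a poset isomorphism is in particular a bijection, so $|\AP_\mS|=|\Ap_1|=d$ and hence $k[\mC]$ is Cohen-Macaulay by Proposition~\ref{prop:caractCM}; since $(\AP_\mS,\leq_\mS)$ is graded by Proposition~\ref{prop:CoverRel&RankFunct}~\ref{prop:CoverRel_b2} and gradedness transfers along isomorphisms, $(\Ap_1,\leq_1)$ is graded too; and an isomorphism between graded posets with least element intertwines their rank functions (each rank counts the length of a saturated chain from the bottom), so by Remark~\ref{rem:Fiber(1)} one gets $|\MSG{\mS_1}\setminus\{a_n\}|=|\rho_1^{-1}(1)|=|\rho^{-1}(1)|=n-1$; as $\MSG{\mS_1}\subseteq\{a_1,\ldots,a_n\}$ forces $\MSG{\mS_1}\setminus\{a_n\}\subseteq\{a_1,\ldots,a_{n-1}\}$, equality of cardinalities gives $\{a_1,\ldots,a_{n-1}\}\subseteq\MSG{\mS_1}$.

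For (b)$\Rightarrow$(a), since $k[\mC]$ is Cohen-Macaulay, Proposition~\ref{prop:caractCM}~\ref{prop:caractCMc} gives $\AP_\mS=\{(0,0)\}\cup\{(r_i,t_{d-i}):1\leq i<d\}$, so the projection onto the first coordinate $\phi\colon\AP_\mS\to\Ap_1$ is a bijection, and it is order-preserving because $\mS\subseteq\mS_1\times\mS_2$. The key step is to show that $\phi$ preserves ranks, i.e. $\rho(\bfy)=\rho_1(\phi(\bfy))$ for every $\bfy=(y_1,y_2)\in\AP_\mS$. Note first that (b) yields $\MSG{\mS_1}\setminus\{a_n\}=\{a_1,\ldots,a_{n-1}\}$. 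Next, any expression $\bfy=\sum_{i=0}^{n}\alpha_i\bfa_i$ must have $\alpha_0=\alpha_n=0$, as otherwise $\bfy-\bfa_0$ or $\bfy-\bfa_n$ would lie in $\mS$; writing $\bfy=\sum_{i=1}^{n-1}\alpha_i\bfa_i$ and taking coordinate sums gives $\sum_i\alpha_i=(y_1+y_2)/d=\rho(\bfy)$ by Proposition~\ref{prop:CoverRel&RankFunct}~\ref{prop:CoverRel_b2}, while taking first coordinates gives a factorization $y_1=\sum_{i=1}^{n-1}\alpha_i a_i$ of $y_1\in\Ap_1$ into elements of $\MSG{\mS_1}$ with $\sum_i\alpha_i$ summands, a number that must equal $\rho_1(y_1)$ since $(\Ap_1,\leq_1)$ is graded (Proposition~\ref{prop:CoverRel&RankFunct}~\ref{prop:CoverRel_a2}). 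Hence $\rho(\bfy)=\rho_1(y_1)$.

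Granting rank-preservation, $\phi$ preserves covering relations in both directions. If $\bfy\prec_\mS\bfz$ then $\bfz=\bfy+\bfa_i$ with $1\leq i\leq n-1$ (Proposition~\ref{prop:CoverRel&RankFunct}~\ref{prop:CoverRel_b1}), so $z_1=y_1+a_i$ with $a_i\in\MSG{\mS_1}\setminus\{a_n\}$ and therefore $y_1\prec_1 z_1$ by Proposition~\ref{prop:CoverRel&RankFunct}~\ref{prop:CoverRel_a1}. Conversely, if $y_1\prec_1 z_1$, then $z_1=y_1+a_i$ with $a_i\in\{a_1,\ldots,a_{n-1}\}$; putting $\bfy=\phi^{-1}(y_1)$ and $\bfz=\phi^{-1}(z_1)$, rank-preservation and $\rho_1(z_1)=\rho_1(y_1)+1$ give $z_1+z_2=y_1+y_2+d$, which together with $z_1=y_1+a_i$ forces $z_2=y_2+(d-a_i)$, i.e. $\bfz=\bfy+\bfa_i$, hence $\bfy\prec_\mS\bfz$ by Proposition~\ref{prop:CoverRel&RankFunct}~\ref{prop:CoverRel_b1}. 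Since both posets are finite and graded, a cover-preserving bijection with cover-preserving inverse is an order isomorphism (a strict inequality $x<y$ is realized by a saturated chain, which $\phi$ sends to a saturated chain), so (a) holds. I expect the rank-preservation step to be the main obstacle: it is exactly the place where the two hypotheses of (b) beyond Cohen-Macaulayness are used, and where one must transport factorizations between $\mS$ and $\mS_1$ while controlling that the homogenizing generators $\bfa_0,\bfa_n$ (respectively $a_n$) never appear; the rest reduces to bookkeeping with the cover descriptions of Proposition~\ref{prop:CoverRel&RankFunct}.
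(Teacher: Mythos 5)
Your proof is correct and follows essentially the same route as the paper: the implication (a)$\Rightarrow$(b) is identical, and for (b)$\Rightarrow$(a) you use the same bijection (projection onto the first coordinate) and the same key fact $\rho(y_1,y_2)=\rho_1(y_1)$, proved by the same transport of factorizations. The only cosmetic difference is that you deduce $z_2=y_2+d-a_i$ directly from the rank equality $z_1+z_2=y_1+y_2+d$, whereas the paper first bounds $z_2\leq y_2+d-a_i$ via the Apery property in $\mS_2$ and then excludes strict inequality by comparing ranks.
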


\begin{proof}
\underline{$(a) \Rightarrow (b)$}. 
If $(\AP_\mS, \leq_{\mS}) \simeq (\Ap_1, \leq_1)$, then $\Ap_1$ and $\AP_\mS$ have the same number of elements, and hence $k[\mC]$ is Cohen-Macaulay by Proposition~\ref{prop:caractCM}~\ref{prop:caractCMb}. Moreover, since $(\AP_\mS,\leq_\mS)$ is graded by Proposition~\ref{prop:CoverRel&RankFunct}~\ref{prop:CoverRel_b2}, $(\Ap_1,\leq_1)$ is graded. 
Finally, $|\rho_1^{-1}(1)| =|\rho^{-1}(1)|$ so, by Remark~\ref{rem:Fiber(1)}, $|\MSG{\mS_1} \setminus \{a_n\}| = n-1$, and hence $\{a_1,\ldots,a_{n-1}\} \subset \MSG{\mS_1}$.\\
\underline{$(b) \Rightarrow (a)$}. 
If $k[\mC]$ is Cohen-Macaulay, then $|\AP_\mS| = |\Ap_1|$ by Proposition~\ref{prop:caractCM}~\ref{prop:caractCMb}, and hence the map $\varphi: \AP_\mS \rightarrow \Ap_1$ defined by $\varphi(r_j,t_{d-j}) = r_j$ for all $j=0,\ldots,d-1$, is bijective. Let us prove that it is an isomorphism of posets. 
By Proposition~\ref{pr:ordeninterseccion}, $\varphi$ is an order-preserving map, so one just has to show that $\varphi^{-1}$ is also order-preserving.
Consider $y_1,z_1\in \Ap_1$ such that $y_1 \prec_1 z_1$. Then, there exists $i \in \{1,\ldots,n-1\}$ such that $z_1=y_1+a_i$ by Proposition~\ref{prop:CoverRel&RankFunct}~\ref{prop:CoverRel_a1}.
Moreover, $y_2+d-a_i \geq z_2$ since $z_2\in \Ap_2$ and $y_2+d-a_i \in \mS_2$. Note that $\rho(y_1,y_2) = \rho_1(y_1)$ (and the same holds for $(z_1,z_2)$). This is because if we write $(y_1,y_2) = \sum_{i=1}^{n-1} \alpha_i (a_i,d-a_i)$ for some $\alpha_i \in \N$, then $y_1 = \sum_{i=1}^{n-1} \alpha_i a_i$ is a way of writing $y_1$ as a sum of elements in $\MSG{\mS_1}$, and hence $\rho(y_1,y_2) = \sum_{i=1}^{n-1} \alpha_i = \rho_1(y_1)$ by Proposition~\ref{prop:CoverRel&RankFunct}~\ref{prop:CoverRel_a2} and \ref{prop:CoverRel_b2}. If $z_2<y_2+d-a_i$, then $\rho_1(z_1) = \rho(z_1,z_2)\leq \rho(y_1,y_2) = \rho_1(y_1)$, a contradiction since $y_1 \prec_1 z_1$. Therefore, $y_2+d-a_i=z_2$ and we are done. 
\end{proof}

Note that $\Ap_1$ can be a graded poset even if $(\Ap_1,\leq_1)$ and $(\AP_\mS,\leq_\mS)$ are not isomorphic as the following example shows.

\begin{example}
For the sequence $a_1=5<a_2=11<a_3=13$, the Apery set of the numerical semigroup $\mS_1 = \langle a_1,a_2,a_3\rangle$ is $\Ap_1 = \{0,27,15,16,30,5,32,20,21,22,10,11,25\}$. This Apery set is graded with the  rank function $\rho_1:\mS_1\rightarrow \N$ defined below (see Figure~\ref{fig:rank_fun}):
\begin{itemize}
    \item $\rho_1(0) = 0$,
    \item $\rho_1(5) = \rho(11) = 1$,
    \item $\rho_1(10)=\rho_1(16)=\rho_1(22) = 2$,
    \item $\rho_1(15)=\rho_1(21)=\rho_1(27) = 3$,
    \item $\rho_1(20)=\rho_1(32)=4$,
    \item $\rho_1(25)=5$,
    \item $\rho_1(30)=6$.
\end{itemize}
\begin{figure}
\includegraphics[scale=.6]{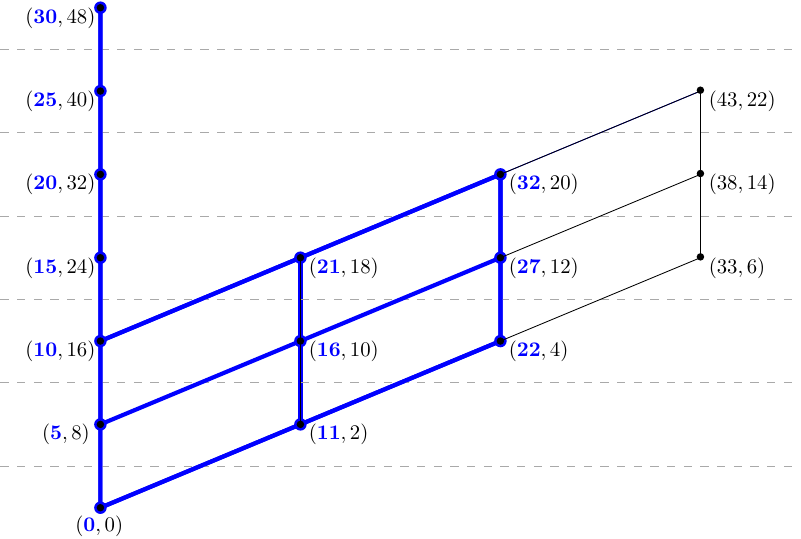}  

\caption{The posets $(\Ap_1,\leq_1)$ (in blue) and $(\AP_\mS,\leq_\mS)$ (in black) for $\mS_1 = \langle 5,11,13\rangle$.}
\label{fig:rank_fun}
\end{figure}
Moreover, since $\AP_\mS$ has 16 elements, $k[\mC]$ is not Cohen-Macaulay, and hence $(\Ap_1,\leq_1)$ and $(\AP_\mS,\leq_\mS)$ are not isomorphic by Proposition~\ref{prop:APS_Ap1_isomorphic}.
\end{example}

We now relate the condition in Proposition~\ref{prop:APS_Ap1_isomorphic} to the criterion in \cite[Thm.~4.1]{Sengupta2023} that uses Gröbner bases.

\begin{proposition} \label{prop:Ap1graded_Sengupta} 
Consider the following two claims:
\begin{enumerate}[(a)]
    \item\label{prop:Ap1graded_Sengupta_a} $(\Ap_1,\leq_1)$ is graded and $\{a_1,\ldots,a_{n-1}\}$ is contained in the minimal system of generators of~$\mS_1$.
    \item\label{prop:Ap1graded_Sengupta_b} The variable $x_n$ appears in every non-homogeneous binomial of $\mG_>$, the reduced Gr\"{o}bner basis of $I_{\mA_1}$ with respect to the degrevlex order with $x_1>x_2>\dots>x_n$. 
\end{enumerate}
Then $\ref{prop:Ap1graded_Sengupta_b} \Rightarrow \ref{prop:Ap1graded_Sengupta_a}$, and $\ref{prop:Ap1graded_Sengupta_a} \Rightarrow \ref{prop:Ap1graded_Sengupta_b}$ holds if $k[\mC]$ is \cm.
\end{proposition}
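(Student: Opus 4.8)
The plan is to translate both conditions into statements about the staircase of $\ini_>(I_{\mA_1})$ and about the $\mS_1$-degrees that appear in the reduced Gröbner basis $\mG_>$. Recall that the standard monomials of $\ini_>(I_{\mA_1})$ (i.e., the monomials outside $\ini_>(I_{\mA_1})$) form a $k$-basis of $k[\mC_1]$, and since $x_n$ maps to $t^{a_n}=t^d$ under $\varphi_{\mA_1}$, the monomials $\bx^\alpha$ with $\alpha_n=0$ that are standard are exactly (a system of representatives of) the Apery set $\Ap_1$ with respect to $d$; concretely, $\varphi_{\mA_1}$ induces a bijection between the standard monomials not divisible by $x_n$ and $\Ap_1$. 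A non-homogeneous binomial $\bx^\alpha-\bx^\beta$ in $\mG_>$ with $|\alpha|>|\beta|$ has $\ini_>$ equal to $\bx^\alpha$, and the condition in~\ref{prop:Ap1graded_Sengupta_b} is that whenever $\alpha_n=\beta_n=0$ is \emph{impossible}, i.e., that every such binomial involves $x_n$. I would first set up this dictionary carefully and record that, by the degrevlex choice with $x_n$ smallest, $x_n \mid \ini_>(g)$ can only happen when $g$ is homogeneous (this is essentially Proposition~\ref{prop:caractCM}~\ref{prop:caractCMf} read on the level of the Gröbner basis rather than the minimal generators of the initial ideal, and is where the Cohen-Macaulay hypothesis enters for the reverse implication).

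\medskip

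For \ref{prop:Ap1graded_Sengupta_b} $\Rightarrow$ \ref{prop:Ap1graded_Sengupta_a}: assuming $x_n$ appears in every non-homogeneous element of $\mG_>$, I would first show $\{a_1,\ldots,a_{n-1}\}\subset\MSG{\mS_1}$. If some $a_j$ ($j<n$) were not a minimal generator, then $a_j=\sum_{i\neq j}\alpha_i a_i$ with the relation binomial $x_j - \bx^\alpha$ (or a reduction of it) forcing a non-homogeneous element of $\mG_>$ not involving $x_n$ — one has to be a little careful since it is the reduced Gröbner basis, but a binomial expressing the non-minimality will have degree drop and, after autoreduction, still be $x_n$-free, contradicting the hypothesis. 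Then, for gradedness of $\Ap_1$: by Proposition~\ref{prop:CoverRel&RankFunct}~\ref{prop:CoverRel_a2} it suffices to show every element of $\Ap_1$ has all its factorizations into elements of $\MSG{\mS_1}$ of the same length. Two factorizations of the same element of $\Ap_1$ of different lengths would, after cancelling, yield a non-homogeneous binomial in $I_{\mA_1}$ supported on $x_1,\ldots,x_{n-1}$ only (since representatives of $\Ap_1$ are $x_n$-free); reducing this binomial modulo $\mG_>$ and using that the standard monomial it reduces to is again $x_n$-free while the leading term is $x_n$-free, one produces — by a standard argument tracking the syzygy — a non-homogeneous element of $\mG_>$ with no $x_n$, again a contradiction. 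This last argument is the technical heart and I would phrase it via the Buchberger-style observation that the $x_n$-free, non-homogeneous binomials in $I_{\mA_1}$ are generated as an ideal-theoretic object by the $x_n$-free, non-homogeneous elements of $\mG_>$.

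\medskip

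For \ref{prop:Ap1graded_Sengupta_a} $\Rightarrow$ \ref{prop:Ap1graded_Sengupta_b} under the assumption that $k[\mC]$ is Cohen-Macaulay: by Proposition~\ref{prop:APS_Ap1_isomorphic}, \ref{prop:Ap1graded_Sengupta_a} plus Cohen-Macaulayness gives $(\Ap_1,\leq_1)\simeq(\AP_\mS,\leq_\mS)$, and in particular $|\Ap_1|=d$ and the rank function $\rho_1$ agrees with $\rho(y_1,y_2)=(y_1+y_2)/d$ under the isomorphism. Suppose, for contradiction, that $\mG_>$ contains a non-homogeneous binomial $g=\bx^\alpha-\bx^\beta$ with $x_n\nmid g$, $|\alpha|>|\beta|$. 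Then $\bx^\beta$ is a standard monomial not divisible by $x_n$, so it corresponds to an element $y\in\Ap_1$; I would show that $\deg_{\mS_1}(\bx^\alpha)=\deg_{\mS_1}(\bx^\beta)=y$ as well (the leading term $\bx^\alpha$ lies in $\ini_>(I_{\mA_1})$, but its $\mS_1$-degree is still $y$ because $g\in I_{\mA_1}$ and $x_n\nmid\bx^\alpha$, so $\bx^\alpha$ reduces to $\bx^\beta$ preserving $\mS_1$-degree $y$, and crucially $\bx^\alpha$ being $x_n$-free means $y=\sum\alpha_i a_i$ with all $a_i\in\MSG{\mS_1}$ when $\{a_1,\ldots,a_{n-1}\}\subset\MSG{\mS_1}$, modulo checking no $\alpha$-support hits a non-generator — here I need that $x_n$-free monomials in the support only use $x_1,\ldots,x_{n-1}$, which are generators by hypothesis). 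Then $|\alpha|$ and $|\beta|$ are two factorization lengths of $y$ into elements of $\MSG{\mS_1}$, contradicting gradedness via Proposition~\ref{prop:CoverRel&RankFunct}~\ref{prop:CoverRel_a2}.

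\medskip

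The step I expect to be the main obstacle is the passage, in the \ref{prop:Ap1graded_Sengupta_b} $\Rightarrow$ \ref{prop:Ap1graded_Sengupta_a} direction, from ``an individual non-homogeneous $x_n$-free binomial exists in $I_{\mA_1}$'' to ``an $x_n$-free non-homogeneous binomial exists in the reduced Gröbner basis $\mG_>$.'' Reduction of a binomial modulo $\mG_>$ can \emph{a priori} introduce $x_n$ through some reduction step, so one cannot argue naively. The clean way is to observe that since $x_n$ is the smallest variable in degrevlex and $x_n\nmid\ini_>(g)$ for any non-homogeneous $g\in\mG_>$ would be needed, the set of monomials not divisible by $x_n$ is closed under the reduction process in the relevant degree range, or to argue contrapositively: if every non-homogeneous element of $\mG_>$ contains $x_n$, then every non-homogeneous binomial in $I_{\mA_1}$, when reduced to a normal form, keeps track of $x_n$ appropriately, forcing $\Ap_1$-representatives of equal $\mS_1$-degree to have equal total degree. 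I would make this precise by working with the $\N$-grading by total degree modulo the $\mS_1$-grading and invoking that $\varphi_{\mA_1}(x_n)$ has $\mS_1$-degree a multiple of $d$, so $x_n$-free monomials are precisely those whose $\mS_1$-degree lies in a fixed set of coset representatives; the Cohen-Macaulay hypothesis in the other direction is exactly what guarantees this correspondence is length-faithful.
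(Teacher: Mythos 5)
Your skeleton matches the paper's: both implications argued by contradiction, gradedness tested through factorization lengths via Proposition~\ref{prop:CoverRel&RankFunct}~\ref{prop:CoverRel_a2}, and the Cohen--Macaulay hypothesis entering through Proposition~\ref{prop:caractCM}~\ref{prop:caractCMf}. In the direction \ref{prop:Ap1graded_Sengupta_b}$\Rightarrow$\ref{prop:Ap1graded_Sengupta_a} your mechanism is genuinely different from the paper's and can be made to work: instead of the paper's extremal choices (take $\beta$ of least positive length, then $\bx^\alpha$ degrevlex-minimal, and inspect one element of $\mG_>$ whose leading term divides $\bx^\alpha$), you reduce the offending binomial to zero and note that every monomial occurring has $\mS_1$-degree $s\in\Ap_1$, hence is $x_n$-free because $s-a_n\notin\mS_1$, while some reduction step must use a non-homogeneous element of $\mG_>$ since homogeneous steps preserve the two total degrees; that element is then $x_n$-free, a contradiction, and the same argument also handles the minimal-generator part. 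Be aware, though, that the fact you need is exactly the one-way implication ``every monomial whose $\mS_1$-degree lies in $\Ap_1$ is not divisible by $x_n$,'' not your ``precisely a set of coset representatives'' characterization (which is false: high powers of $x_1$ are $x_n$-free with degree outside $\Ap_1$), and the ``Buchberger-style'' claim that the $x_n$-free non-homogeneous binomials of $I_{\mA_1}$ are generated by the $x_n$-free non-homogeneous elements of $\mG_>$ is neither needed nor clearly true; only the degree-restricted reduction statement is.

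The genuine gap is in \ref{prop:Ap1graded_Sengupta_a}$\Rightarrow$\ref{prop:Ap1graded_Sengupta_b}. Your contradiction requires $y:=\deg_{\mS_1}(\bx^\beta)\in\Ap_1$, i.e.\ $y-a_n\notin\mS_1$, which you extract from the ``dictionary'' that the standard monomials not divisible by $x_n$ correspond exactly to $\Ap_1$. The inclusion you use (standard and $x_n$-free implies $\mS_1$-degree in $\Ap_1$) is false in general: for the non-ACM sequence $4<9<10$ the monomial $x_2^2$ is standard and $x_3$-free, yet its degree $18$ is not in the Apery set. Proving this inclusion under the Cohen--Macaulay hypothesis is precisely the technical core of the paper's proof: assuming $y-a_n\in\mS_1$, write $y=\sum\gamma_ia_i+a_n$; then $\bx^\gamma x_n\in\ini_>(I_{\mA_1})$ because $\bx^\beta$ is standard, Proposition~\ref{prop:caractCM}~\ref{prop:caractCMf} lets one drop $x_n$ to get $\bx^\gamma\in\ini_>(I_{\mA_1})$, one reduces $\bx^\gamma$ to a standard monomial $\bx^\delta$, and \ref{prop:caractCMf} again shows $\bx^\beta-\bx^\delta x_n$ is a nonzero element of $I_{\mA_1}$ both of whose monomials are standard, a contradiction. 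Your sketch replaces this by two assertions that do not hold as stated: ``$x_n\mid\ini_>(g)$ can only happen when $g$ is homogeneous'' is backwards (for a homogeneous element of the reduced basis the degrevlex leading term is the $x_n$-free one, so leading terms divisible by $x_n$ arise only from non-homogeneous elements, and the content of \ref{prop:caractCMf} is that under the Cohen--Macaulay hypothesis they do not arise at all), and the coset-representative characterization of $x_n$-free monomials fails as noted above. So this step, which is where the Cohen--Macaulay hypothesis does its real work, still has to be supplied.
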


\begin{proof}
\underline{$(a) \Rightarrow (b)$ when $k[\mC]$ is \cm}. Assume that there exists a non-homo\-ge\-neous binomial $f = \bx^\alpha-\bx^\beta \in \mG_>$ with $\ini_>(f)=\bx^\alpha$ such that $x_n$ does not appear in the support of $f$, i.e. $|\alpha| > |\beta|$ and $\alpha_n=\beta_n=0$, and 
consider $s = \sum_{i=1}^{n-1} \alpha_i a_i = \sum_{i=1}^{n-1} \beta_i a_i \in \mS_1$. Let us prove that $s-a_n \notin \mS_1$. If $s-a_n \in \mS_1$, we can write $s$ as $s = \sum_{i=1}^{n} \gamma_i a_i + a_n$ for some $\gamma=(\gamma_1,\ldots,\gamma_n) \in \N^n$, and consider the binomial $g=\bx^\gamma x_n-\bx^\beta \in I_{\mA_1}$. Note that $\bx^\beta-\bx^\gamma x_n \neq 0$ since $\beta_n = 0$. As $f\in\mG_>$ and $\mG_>$ is reduced, one has that $\bx^\beta \notin \ini_>(I_{\mA_1})$ and hence $\ini_>(g)=\bx^\gamma x_n$. Therefore, $\bx^\gamma x_n \in \ini_> (I_{\mA_1})$ and, by Proposition~\ref{prop:caractCM}~\ref{prop:caractCMf}, $\bx^\gamma \in \ini_> (I_{\mA_1})$.  
The remainder of the division of $\bx^\gamma$ by $\mG_>$ is a monomial $\bx^\delta$ such that $\bx^\delta \notin \ini_> (I_{\mA_1})$, and one has that the binomial $\bx^\beta-\bx^\delta x_n \in I_{\mA_1}$ is the difference of two binomials that do not belong to $\ini_> (I_{\mA_1})$ using again Proposition~\ref{prop:caractCM}~\ref{prop:caractCMf}, a contradiction. Thus, $s-a_n \notin \mS_1$ and hence $s\in \Ap_1$. But  $s = \sum_{i=1}^{n-1} \alpha_i a_i = \sum_{i=1}^{n-1} \beta_i a_i \in \mS_1$ with $|\alpha| > |\beta|$ so if $\{a_1,\ldots,a_{n-1}\} \subset \MSG{\mS_1}$, one gets by Proposition~\ref{prop:CoverRel&RankFunct}~\ref{prop:CoverRel_a2} that $\Ap_1$ is not graded.
\\
\underline{$(b) \Rightarrow (a)$}: If $\{a_1,\ldots,a_{n-1}\} \not\subset \MSG{\mS_1}$, select $i\in \{2,\ldots,n-1\}$ such that $a_i$ is not a minimal generator. Then, there exists $\alpha=(\alpha_1,\ldots,\alpha_{i-1}) \in \N^{i-1}$ with $|\alpha|>2$ such that $x_i-\prod_{j<i} x_j^{\alpha_j}\in I_{\mA_1}$. Note that any set of generators of $I_{\mA_1}$ contains an element of this form. Thus, $\mG_>$ contains a non-homogeneous binomial that does not involve the variable $x_n$, a contradiction, and hence $\{a_1,\ldots,a_{n-1} \} \subset \MSG{\mS_1}$.

If $(\Ap_1,\leq_1)$ is not graded, by Proposition~\ref{prop:CoverRel&RankFunct}~\ref{prop:CoverRel_a2}, there exists
$s\in \Ap_1$ that can be written in two different ways using a different number of minimal generators of $\mS_1$, i.e. $s = \sum_{i=1}^{n-1} \alpha_i a_i = \sum_{i=1}^{n-1} \beta_i a_i$ with $|\alpha|>|\beta|$. Note that $\alpha_n=\beta_n=0$ since $s\in \Ap_1$. We can choose $\beta=(\beta_1,\ldots,\beta_{n-1})$ such that $|\beta|>0$ is the least possible value, and $\alpha=(\alpha_1,\ldots,\alpha_{n-1})$ such that, for this election of $\beta$, $\bx^\alpha$ is the smallest possible monomial for the degree reverse lexicographic order. Then $f=\bx^\alpha-\bx^\beta \in I_{\mA_1}$ and $\ini_>(f)=\bx^\alpha$. Since $\bx^\alpha \in \ini_>(I_{\mA_1})$, there exists a binomial $h=\bx^\lambda-\bx^\mu \in \mG_>$ such that $\bx^\lambda$ divides $\bx^\alpha$. 
Let us see that $h$ is not homogeneous and that the variable $x_n$ is not involved in $h$. 
If $h$ is homogeneous, dividing $\bx^\alpha$ by $h$, we get $\bx^\alpha = \bx^{\alpha-\lambda} (\bx^\lambda-\bx^\mu)+\bx^{\alpha-\lambda+\mu}$. Then, 
$s = \sum_i (\alpha_i-\lambda_i+\mu_i) a_i = \sum_i \alpha_i a_i$ with $|\alpha-\lambda+\mu| = |\alpha|$ and $\bx^{\alpha-\lambda+\mu} < \bx^\alpha$, a contradiction with the choice of $\alpha$, so $h$ is not homogeneous. 
On the other hand, since $\bx^\lambda$ divides $\bx^\alpha$ and $\alpha_n =0$, if $x_n$ appears in $\bx^\lambda-\bx^\mu$, it must be in the support of $\bx^\mu$. 
If we write $\bx^\mu = \bx^{\mu'}x_n$, then $\bx^\alpha = \bx^{\alpha-\lambda} (\bx^\lambda-\bx^{\mu}) + \bx^{\alpha-\lambda+\mu'} x_n$ and hence $s = \sum_i (\alpha_i-\lambda_i+\mu'_i) a_i+a_n $ which is impossible because $s\in \Ap_1$. Therefore, we have found a non-homogeneous binomial $h=\bx^\lambda-\bx^\mu \in \mG_>$ where the variable $x_n$ is not involved, a contradiction. Thus, $(\Ap_1,\leq_1)$ is graded.
\end{proof}

\begin{note}
In our proof of $\ref{prop:Ap1graded_Sengupta_a} \Rightarrow \ref{prop:Ap1graded_Sengupta_b}$, we strongly use that $k[\mC]$ is Cohen-Macaulay but we could not find any non-Cohen-Macaulay example where this implication is wrong.
\end{note}

\begin{proof}[Proof of Theorem~\ref{thm:BettiPosets}]
By Propositions \ref{prop:APS_Ap1_isomorphic} and \ref{prop:Ap1graded_Sengupta}, the Apery posets $(\AP, \leq_{\mS})$ and $(\Ap_1, \leq_1)$ are isomorphic \iff{} the variable $x_n$ appears in every non-homogeneous binomial of $\mG_>$, the reduced Gröbner basis of $I_{\mA_1}$ with respect to the degrevlex order with $x_1>x_2>\dots>x_n$. By applying a recent result of Saha, Sengupta and Srivastava \cite[Thm.~4.1]{Sengupta2023}, our result follows.
\end{proof}

\subsection*{Families of curves where the Betti sequences coincide.}
In Propositions \ref{pr:aritm} and \ref{pr:quasiaritm} below, we provide sequences $a_1<\dots<a_n$ for which the condition in Theorem~\ref{thm:BettiPosets} is satisfied. 
\newline

Let us start with arithmetic sequences, i.e., sequences $a_1<\dots<a_n$ such that $a_i = a_1+(i-1)e$ for some positive integer $e$ with $\gcd(a_1,e)=1$. For this family, we refine \cite[Cor.~4.2]{Sengupta2023} that  considers $a_1 > n-1$.

\begin{proposition}\label{pr:aritm}
Let $a_1 < \ldots < a_n=d$ be an arithmetic sequence of relatively prime integers, i.e., for all $i=1,\ldots,n$,
$a_i = a_1+(i-1)e$ for some integers $a_1,e>0$ such that $\gcd(a_1,e)=1$. Then, $(\AP_\mS, \leq_{\mS}) \simeq (\Ap_1, \leq_1)$ \iff{} $a_1 > n-2$. Therefore, if $a_1>n-2$, the Betti sequences of $k[\mC_1]$ and $k[\mC]$ coincide.
\end{proposition}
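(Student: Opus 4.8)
The plan is to deduce the statement from Theorem~\ref{thm:BettiPosets} through the criterion of Proposition~\ref{prop:APS_Ap1_isomorphic}, turning everything into a combinatorial analysis of the numerical semigroup $\mS_1=\langle a_1,\dots,a_n\rangle$ with $a_i=a_1+(i-1)e$. Projective monomial curves associated to arithmetic sequences are always \acm{} (see \cite{BGG}), so $k[\mC]$ is Cohen-Macaulay; hence, by Proposition~\ref{prop:APS_Ap1_isomorphic}, it suffices to prove that $(\Ap_1,\leq_1)$ is graded and $\{a_1,\dots,a_{n-1}\}\subset\MSG{\mS_1}$ \iff{} $a_1>n-2$, the equality of the Betti sequences then following from Theorem~\ref{thm:BettiPosets}. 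Throughout, for $c=(c_1,\dots,c_n)\in\N^n$ I write $\ell(c):=\sum_i c_i$ and $w(c):=\sum_i c_i(i-1)$, so that $\sum_i c_ia_i=\ell(c)\,a_1+w(c)\,e$ with $0\le w(c)\le(n-1)\,\ell(c)$; conversely, for every $\ell\ge 0$ and every integer $w$ with $0\le w\le(n-1)\ell$ one has $\ell a_1+we\in\mS_1$, since $w$ can be written as a sum of $\ell$ summands each lying in $\{0,\dots,n-1\}$.

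The first step is the generator condition: $\{a_1,\dots,a_{n-1}\}\subset\MSG{\mS_1}$ \iff{} $a_1>n-2$, i.e.\ $a_1\ge n-1$. For the implication ``$\Leftarrow$'', assume $a_1\ge n-1$ and suppose some $a_i$ with $1\le i\le n-1$ were not a minimal generator, say $a_i=\sum_j c_ja_j$ with $m:=\ell(c)\ge 2$; comparing both sides gives $(m-1)a_1=\bigl((i-1)-w(c)\bigr)e$, so $\gcd(a_1,e)=1$ forces $e\mid m-1$, hence $m-1\ge e$, and then $ea_1\le(m-1)a_1\le(i-1)e\le(n-2)e$, i.e.\ $a_1\le n-2$, a contradiction ($a_1$ itself is always a minimal generator). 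For ``$\Rightarrow$'', if $a_1\le n-2$ then $2\le a_1+1\le n-1$ and $a_{a_1+1}=a_1+a_1e=(e+1)a_1$ is a sum of $e+1\ge 2$ copies of $a_1$, hence not a minimal generator, so $\{a_1,\dots,a_{n-1}\}\not\subset\MSG{\mS_1}$. In particular, via Proposition~\ref{prop:APS_Ap1_isomorphic} this already gives the ``only if'' half of the proposition.

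The second step is to show that $(\Ap_1,\leq_1)$ is graded when $a_1\ge n-1$. By Proposition~\ref{prop:CoverRel&RankFunct}~\ref{prop:CoverRel_a2} it suffices to check that every $y\in\Ap_1$ has all of its factorizations over $\MSG{\mS_1}$ of the same length. If not, choose $y\in\Ap_1$ with $y=\sum_i\alpha_ia_i=\sum_i\beta_ia_i$ where $\ell(\alpha)>\ell(\beta)=:M$. Then $y\neq 0$, so $M\ge 1$; comparing the two expressions yields $(\ell(\alpha)-M)a_1=(w(\beta)-w(\alpha))e$, and $\gcd(a_1,e)=1$ forces $\ell(\alpha)-M=ke$ for some $k\ge 1$, whence $w(\beta)=w(\alpha)+ka_1\ge a_1\ge n-1$. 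Since also $w(\beta)\le(n-1)M$, the integers $M-1\ge 0$ and $w(\beta)-(n-1)\ge 0$ satisfy $w(\beta)-(n-1)\le(n-1)(M-1)$, so (recalling $d=a_n=a_1+(n-1)e$) the element $y-d=(M-1)a_1+\bigl(w(\beta)-(n-1)\bigr)e$ lies in $\mS_1$ by the realizability remark of the first paragraph --- contradicting $y\in\Ap_1$. Hence $(\Ap_1,\leq_1)$ is graded.

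Combining the two steps completes the argument: when $a_1>n-2$, $k[\mC]$ is Cohen-Macaulay, $(\Ap_1,\leq_1)$ is graded and $\{a_1,\dots,a_{n-1}\}\subset\MSG{\mS_1}$, so $(\AP_\mS,\leq_{\mS})\simeq(\Ap_1,\leq_1)$ by Proposition~\ref{prop:APS_Ap1_isomorphic} and $\beta_i(k[\mC])=\beta_i(k[\mC_1])$ for all $i$ by Theorem~\ref{thm:BettiPosets}; conversely the isomorphism forces $\{a_1,\dots,a_{n-1}\}\subset\MSG{\mS_1}$, hence $a_1>n-2$. I expect the gradedness step to be the delicate one, and within it the crux is the weight identity $w(\beta)=w(\alpha)+ka_1$: it is exactly what makes $w(\beta)$ large enough (namely at least $a_1\ge n-1$) to allow subtracting $d=a_n$ from $y$ while staying inside $\mS_1$. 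The value $a_1=n-1$ is what this argument gains over \cite[Cor.~4.2]{Sengupta2023}; note that for $a_1=n-1$ the generator $a_n=(e+1)a_1$ is itself not minimal, so $\MSG{\mS_1}=\{a_1,\dots,a_{n-1}\}$, but the computation above is set up to cover this case as well.
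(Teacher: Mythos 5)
Your proposal is correct, and it follows the paper's overall skeleton: Cohen-Macaulayness of $k[\mC]$ from \cite[Cor.~2.3]{BGG}, reduction of the poset isomorphism to the two conditions of Proposition~\ref{prop:APS_Ap1_isomorphic}, and the final transfer to Betti numbers via Theorem~\ref{thm:BettiPosets}, with the ``only if'' direction coming from the generator condition in both cases. Where you genuinely diverge is the verification that $(\Ap_1,\leq_1)$ is graded when $a_1\geq n-1$: the paper imports the explicit description of the Apery set from \cite[Thm.~3.4]{LPR}, namely $\Ap_1=\{q_ba_1+r_be \,,\ 0\leq b\leq d-1\}$ with $q_b=\lceil b/(n-1)\rceil$, and then checks that $\rho_1(q_ba_1+r_be)=q_b$ is a rank function by analyzing covering relations, whereas you argue directly on factorization lengths: writing $\sum_i c_ia_i=\ell(c)a_1+w(c)e$, the identity $(\ell(\alpha)-\ell(\beta))a_1=(w(\beta)-w(\alpha))e$ together with $\gcd(a_1,e)=1$ forces $w(\beta)\geq a_1\geq n-1$, which lets you subtract $d=a_n$ and contradict membership in $\Ap_1$. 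Your route is self-contained (it never determines $\Ap_1$ and does not use \cite{LPR}) and in fact proves the slightly stronger fact that \emph{every} representation of an Apery element over the whole set $\{a_1,\ldots,a_n\}$ has the same length --- which is what makes the boundary case $a_1=n-1$, where $a_n=(e+1)a_1$ is not a minimal generator, go through with no extra care; since $\MSG{\mS_1}\subseteq\{a_1,\ldots,a_n\}$, this stronger statement does imply the gradedness criterion of Proposition~\ref{prop:CoverRel&RankFunct}. The paper's route, on the other hand, produces the rank function explicitly (the value $q_b$) and reuses the same LPR description of $\Ap_1$ in the subsequent Proposition~\ref{pr:quasiaritm}, so the two approaches trade elementarity for extra structural information. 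You also spell out the equivalence between $\{a_1,\ldots,a_{n-1}\}\subset\MSG{\mS_1}$ and $a_1>n-2$, which the paper leaves as an easy check; your argument for it (divisibility by $e$ of the length defect, plus the witness $a_{a_1+1}=(e+1)a_1$ when $a_1\leq n-2$) is correct.
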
 

\begin{proof}
We use Proposition~\ref{prop:APS_Ap1_isomorphic} to characterize when $(\AP_\mS,\leq_\mS)$ and $(\Ap_1,\leq_1)$ are isomorphic. When $a_1<\dots<a_n$ is an arithmetic sequence, $k[\mC]$ is always Cohen-Macaulay by  \cite[Cor.~2.3]{BGG}. Moreover, one can easily check that $\{a_1,\ldots,a_{n-1}\} \subset \MSG{\mS_1}$ if and only if $a_1 > n-2$.
Therefore, if $a_1 \leq n-2$, then $(\Ap_1,\leq_1)$ is not isomorphic to $(\AP_\mS,\leq_\mS)$.
Conversely, if $a_1>n-2$, it is sufficient to prove that $(\Ap_1,\leq_1)$ is graded.
By \cite[Thm.~3.4]{LPR}, the Apery set of $\mS_1$ is described as follows: if, for all $b\in \{0,\ldots,d-1\}$, $q_b$ and $-r_b$ denote respectively the quotient and the reminder of the division with negative remainder of $b$ by $n-1$, i.e., $q_b=\left\lceil b/(n-1)  \right\rceil$ and $r_b= q_b(n-1)-b$ with $0\leq r_b\leq n-2$, then
\[\Ap_1 = \left\{ q_b a_1 + r_be \,,\ 0\leq b\leq d-1 \right\} \,.\]
We claim that the grading is given by the function $\rho_1:\Ap_1 \rightarrow \N$ defined by $\rho_1\left( q_b a_1 + r_be \right)$ $ = q_b$. Consider $y, z\in \Ap_1$ such that $y \prec_1 z$, an let us prove that $\rho_1(z) = \rho_1(y)+1$. By Proposition~\ref{prop:CoverRel&RankFunct}~\ref{prop:CoverRel_a1}, there exist natural numbers $b\in \{0,\ldots,d-1\}$ and $i \in \{1,\ldots,n-1\}$ such that $y = q_b a_1 + r_b e$ and $z = y+a_i = (q_b+1)a_1 + (r_b+i-1)e$. If $i\geq n-r_b$, then $z-d = q_b a_1+\left( r_b+i-1-(n-1) \right)e \in \mS_1$, contradicting the fact $z\in \Ap_1$. Hence, $i \leq n-r_b-1$. Set $b' := (q_b+1)(n-1)-(r_b+i-1)$. As $0\leq r_b+i-1\leq n-2$, on the one hand one has that $0\leq b'\leq d-1$, on the other  $q_{b'} = q_b+1$ and $r_{b'} = r_b+i-1$. Therefore $z = q_{b'}a_1+r_{b'}e$, and hence $\rho_1(z)=\rho_1(y)+1$.
\end{proof}

\begin{example}
For the sequence $5<6<7<8<9<10$, one has that $a_1 = 5 > 4 = n-2$. Therefore, the Apery sets $(\Ap_1,\leq_1)$ and $(\AP_\mS,\leq_\mS)$ are isomorphic by Proposition~\ref{pr:aritm}. The Betti sequences of $k[\mC_1]$ and $k[\mC]$ coincide and one can check, using, e.g., \cite{Singular}, that both sequences are $(1,11,30,35,19,4)$. 
The isomorphic posets $(\Ap_1,\leq_1)$ and $(\AP_\mS,\leq_\mS)$ in this example are shown in Figure~\ref{fig:5678910}.

\begin{figure} 
\includegraphics[scale=.8]{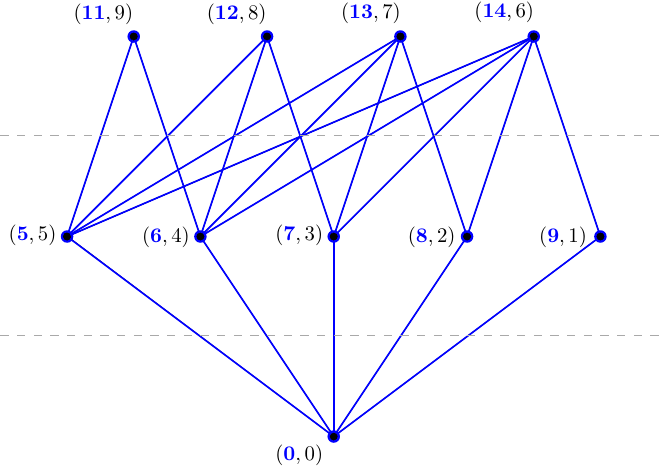}  
\caption{The posets $(\Ap_1,\leq_1)$ (in blue) and $(\AP_\mS,\leq_\mS)$ (in black) for $\mS_1 = \langle 5,6,7,8,9,10\rangle$.}
\label{fig:5678910}
\end{figure}
\end{example}

The next family that we now consider are monomial curves defined by an arithmetic sequence in which we have removed one term. 
In \cite[Sect. 6]{BGGM}, the authors study the canonical projections of the projective monomial curve $\mC$ defined by an
arithmetic sequence $a_1 < \cdots < a_{n}$ of relatively prime integers, i.e., the curve $\pi_r(\mC)$ obtained as the Zariski closure of the
image of $\mC$ under the $r$-th canonical projection $\pi_r: \mathbb P_k^{n} \dashrightarrow \mathbb P_k^{n-1}$, 
$(p_0:\cdots:p_{n}) \mapstochar\dashrightarrow (p_0: \cdots: p_{r-1} : p_{r+1} : \cdots : p_{n})$. We know that $\pi_r(\mC)$ is the projective 
monomial curve associated to the sequence $a_1 < \cdots < a_{r-1} < a_{r+1} < \cdots < a_{n}$.
\newline

If one removes either the first or the last term from an arithmetic sequence, the sequence is still arithmetic. Moreover, note that if an arithmetic sequence $a_1 < \cdots < a_{n}$ satisfies the condition $a_1>n-2$ in Proposition~\ref{pr:aritm}, then the arithmetic sequence obtained by removing either the first or the last term also satisfies the condition in Proposition~\ref{pr:aritm} because the number of terms in the new sequence is smaller, and its first term may have increased. Thus, we will only focus here on sequences obtained from an arithmetic sequence $a_1 < \cdots < a_{n}$ by removing $a_r$ for $r\in \{2,\ldots,n-1\}$.
Set $\mA_1 := \{a_1,\ldots,a_n\} \setminus \{a_r\}$, and consider the numerical semigroup $\mS_1=\mS_{\mA_1}$ and its homogenization $\mS$. We characterize in Proposition~\ref{pr:quasiaritm} when the posets $(\Ap_1,\leq_1)$ and $(\AP_\mS,\leq_\mS)$ are isomorphic. Two main ingredients in the proof are the following two results in \cite{BGGM} that we recall for convenience. The first one is a technical lemma, while the second describes the Apery set of $\mS_1$.

\begin{lemma}[{\cite[Lemma 2]{BGGM}}] \label{lemma:BGGM_2}
Let $a_1<\dots<a_n$ be an arithmetic sequence of relatively prime integers with common difference $e$. Set $q := \lfloor (a_1-1)/(n-1) \rfloor \in \N$ and $\ell := a_1-q(n-1) \in \{1,\ldots,n-1\}$. Then, 
\begin{enumerate}[(a)]
\item\label{lemma:BGGM_2a} $(q+e)a_1+a_i=a_{\ell+i}+qa_n$\,, for all $i\in \{1,\ldots,n-\ell\}$, and
\item\label{lemma:BGGM_2b} $q+e+1 = \min\{m>0 \mid m a_1 \in \langle a_2,\ldots,a_n\rangle\}$.
\end{enumerate}
\end{lemma}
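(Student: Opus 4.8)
The plan is to handle the two parts separately; both come down to elementary manipulations once one writes $a_j = a_1+(j-1)e$ for every $j$ and recalls that $\ell+q(n-1)=a_1$ by the very definition of $\ell$. For part~\ref{lemma:BGGM_2a} I would simply expand both sides. On the left, $(q+e)a_1+a_i = (q+e)a_1 + a_1 + (i-1)e = (q+e+1)a_1+(i-1)e$. On the right, $a_{\ell+i}+q\,a_n = a_1+(\ell+i-1)e + q a_1 + q(n-1)e = (q+1)a_1 + \bigl(\ell+q(n-1)+i-1\bigr)e = (q+1)a_1 + (a_1+i-1)e = (q+e+1)a_1+(i-1)e$, where I used $\ell+q(n-1)=a_1$ in the penultimate step. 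The two expressions coincide, and the restriction $1\le i\le n-\ell$ is exactly what forces the index $\ell+i$ into $\{2,\ldots,n\}$, so that $a_{\ell+i}$ is one of the given generators.

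For part~\ref{lemma:BGGM_2b} I would prove the two halves of the equality. That $q+e+1$ belongs to the set follows by specializing part~\ref{lemma:BGGM_2a} to $i=1$: this gives $(q+e+1)a_1 = a_{\ell+1}+q\,a_n$, and since $2\le\ell+1\le n$ the right-hand side lies in $\langle a_2,\ldots,a_n\rangle$. For minimality, suppose $m>0$ and $m a_1=\sum_{j=2}^n c_j a_j$ with $c_j\in\N$ not all zero; put $c:=\sum_{j=2}^n c_j\ge 1$ and $S:=\sum_{j=2}^n c_j(j-1)\ge 1$. Substituting $a_j=a_1+(j-1)e$ gives $m a_1 = c\,a_1 + eS$, i.e. $(m-c)a_1 = eS$, and since $\gcd(a_1,e)=1$ this forces $a_1\mid S$, say $S=a_1 t$ with $t\ge 1$, so that $m=c+et$. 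On the other hand $S=\sum_{j} c_j(j-1)\le(n-1)\sum_{j} c_j=(n-1)c$, hence $c\ge a_1 t/(n-1)\ge a_1/(n-1)$ and therefore $c\ge\lceil a_1/(n-1)\rceil = q+1$ (the last equality because $a_1=q(n-1)+\ell$ with $1\le\ell\le n-1$). Combining, $m=c+et\ge(q+1)+e = q+e+1$, which is the required lower bound.

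The computations are short, so I do not anticipate a genuine obstacle; the only delicate point, in part~\ref{lemma:BGGM_2b}, is bookkeeping: one must make sure that $S>0$ (so that $t\ge 1$, which is precisely what makes $m$ exceed $q+1$ by at least $e$ rather than by $0$), and one must correctly identify $\lceil a_1/(n-1)\rceil$ with $q+1$ from the definitions of $q$ and $\ell$. An alternative to the elementary counting bound $c\ge\lceil a_1/(n-1)\rceil$ would be to invoke the known description of minimal factorizations in arithmetic numerical semigroups, but the estimate above seems the most self-contained route.
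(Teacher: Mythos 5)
Your proposal is correct. Note that the paper does not prove this lemma at all: it is quoted verbatim from \cite{BGGM} (Lemma~2 there) as an external ingredient, so there is no in-paper argument to compare against. Your verification is a sound self-contained substitute: part~(a) is the straightforward expansion using $a_j=a_1+(j-1)e$ and $\ell+q(n-1)=a_1$, and in part~(b) the membership follows from (a) with $i=1$ (valid since $\ell\le n-1$), while your lower bound correctly handles the two delicate points you flag, namely that $S\ge 1$ forces $t\ge 1$ (so $m=c+et\ge c+e$) and that $a_1=q(n-1)+\ell$ with $1\le\ell\le n-1$ gives $\lceil a_1/(n-1)\rceil=q+1$. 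The only cosmetic addition worth making is one line observing that $\gcd(a_1,e)=1$ is equivalent to the stated hypothesis $\gcd(a_1,\ldots,a_n)=1$, which you use to conclude $a_1\mid S$.
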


\begin{lemma}[{\cite[Cor.~4]{BGGM}}] \label{lemma:Ap1_project}
Let $a_1<\dots<a_n$ be an arithmetic sequence of relatively prime integers with common difference $e$. Denote by $A$ the Apery set of $\bar{\mS}_1 = \langle a_1,\ldots,a_n \rangle$ with respect to $a_n$, $q:= \lfloor (a_1-1)/(n-1)\rfloor$, and, for all $\mu\in \N$, set $v_\mu :=\mu a_1+a_2$. Given $r\in \{2,\ldots,n-1\}$, consider $\mA_1 = \{a_1,\ldots,a_n\} \setminus \{a_r\}$, and the semigroup $\mS_1$ generated by $\mA_1$. When $a_1\geq r$, the Apery set of $\mS_1$ with respect to $a_n$ is described as follows:
\begin{enumerate}[(a)]
    \item\label{lemma:Ap1_project(2)} If $r=2$, 
\[\Ap_1 = \left\lbrace \begin{array}{ll}
    \left(A \setminus \{v_\mu \mid 0 \leq \mu \leq q+e\} \right) \cup \{v_\mu + a_n \mid 0 \leq \mu \leq q+e\}, & \text{if} \,\, n-1 \mid a_1,\\
    \left(A \setminus \{v_\mu \mid 0 \leq \mu \leq q+e-1\} \right) \cup \{v_\mu + a_n \mid 0 \leq \mu \leq q+e-1\}, & \text{otherwise}. 
    \end{array} \right.\]
    \item\label{lemma:Ap1_project(3-n2)} If $r\in \{3,\ldots,n-2\}$, \[\Ap_1 = \left( A \setminus \{a_r\} \right) \cup \{a_r+a_n\}.\]
    \item\label{lemma:Ap1_project(n1)} If $r = n-1$, \[\Ap_1 = \left\lbrace \begin{array}{ll}
        \left(A \setminus \{a_{n-1}\} \right) \cup \{a_{n-1}+(q+1)a_n\}, & \text{if} \,\, n-1 \mid a_1, \\
    \left(A \setminus \{a_{n-1}\} \right) \cup \{a_{n-1}+qa_n\}, & \text{otherwise}.
    \end{array} \right.\]
\end{enumerate}
\end{lemma}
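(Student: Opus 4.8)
The plan is to prove this by comparing $\Ap_1 = \{y \in \mS_1 : y - a_n \notin \mS_1\}$ with the Apery set $A$ of $\bar{\mS}_1 = \langle a_1,\ldots,a_n\rangle$ with respect to $a_n$, which is known explicitly: by \cite[Thm.~3.4]{LPR} (recalled in the proof of Proposition~\ref{pr:aritm}), every nonzero element of $A$ can be written as $m a_1 + s e$ with $m \ge 1$ and $0 \le s \le n-2$, hence equals $m a_1$ if $s = 0$ and $(m-1)a_1 + a_{s+1}$ if $s \ge 1$. Both $A$ and $\Ap_1$ are complete systems of residues modulo $a_n$, each consisting of the least element of the relevant semigroup in each class; since $\mS_1 \subseteq \bar{\mS}_1$, the element of $\Ap_1$ in the class of $w \in A$ is $w + \gamma(w)\, a_n$ for some $\gamma(w) \in \N$, and $\gamma(w) = 0$ precisely when $w$ is a sum of elements of $\mA_1 = \{a_1,\ldots,a_n\} \setminus \{a_r\}$, i.e.\ when $w$ has a representation over the generators of $\bar{\mS}_1$ avoiding the deleted generator $a_r$. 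So the task splits into: (i) decide which $w \in A$ admit such an $a_r$-free representation (these stay in $\Ap_1$); and (ii) compute $\gamma(w)$ for the rest.

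For (i), the two rewriting tools are the arithmetic identity $a_i + a_j = a_{i'} + a_{j'}$ whenever $i+j = i'+j'$ (clear from $a_i = a_1+(i-1)e$) and the wrap-around relations $(q+e)a_1 + a_i = a_{\ell+i} + q a_n$ of Lemma~\ref{lemma:BGGM_2}~\ref{lemma:BGGM_2a}. If the canonical form of $w \in A$ does not involve $a_r$ we are done; if it does, i.e.\ $s = r-1$, then as soon as $m \ge 2$ one rewrites $a_1 + a_r = a_2 + a_{r-1}$ (for $r \ge 3$) or $a_1 + a_{n-1} = a_2 + a_{n-2}$ (for $r = n-1$; here $n \ge 4$, the small cases being direct), eliminating $a_r$. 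Thus for $r \in \{3,\ldots,n-2\}$ the unique obstructed element of $A$ is $a_r$ (the case $m=1$, $s=r-1$), and for $r = n-1$ it is $a_{n-1}$. For $r = 2$ there is no local rewriting of $a_1 + a_2$, so the elements $v_\mu = \mu a_1 + a_2 \in A$ remain obstructed until enough copies of $a_1$ accumulate to invoke a wrap-around: using Lemma~\ref{lemma:BGGM_2}~\ref{lemma:BGGM_2a} with $i = 2$ when $\ell \le n-2$ (i.e.\ $n-1 \nmid a_1$) and with $i=1$ plus the ensuing identity $(q+e+1)a_1 = (q+1)a_n$ when $\ell = n-1$ (i.e.\ $n-1 \mid a_1$), one shows that $v_\mu$ gains an $a_2$-free representation exactly for $\mu \ge q+e$, respectively $\mu \ge q+e+1$, which accounts for the obstructed chains $\{v_\mu : 0 \le \mu \le q+e-1\}$ and $\{v_\mu : 0 \le \mu \le q+e\}$.

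For (ii) I would first exhibit the claimed replacements: $a_r + a_n = a_{r+1} + a_{n-1} \in \mS_1$ for $r \le n-2$; $v_\mu + a_n = \mu a_1 + a_3 + a_{n-1} \in \mS_1$ for the chain when $r = 2$; and, when $r = n-1$, $a_{n-1} + q a_n = (q+e)a_1 + a_{n-1-\ell} \in \mS_1$ via Lemma~\ref{lemma:BGGM_2}~\ref{lemma:BGGM_2a} if $\ell \le n-2$, while $a_{n-1} + (q+1)a_n = (q+e)a_1 + a_2 + a_{n-2} \in \mS_1$ if $n-1 \mid a_1$ (combining $(q+e+1)a_1 = (q+1)a_n$ with $a_1+a_{n-1}=a_2+a_{n-2}$). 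Since in every obstructed class there is nothing of $\mS_1$ strictly between $w$ and the exhibited element, it remains only to prove the matching lower bounds, and this is the step I expect to be the main obstacle: one must show $a_r \notin \mS_1$, that $v_\mu \notin \mS_1$ for $\mu$ below the cut-off, and that $a_{n-1} + j a_n \notin \mS_1$ for $j < q$ (resp.\ $j < q+1$). The argument should be uniform — take a hypothetical representation over $\mA_1$, separate the $a_1$-contribution from the $e$-slope contribution, and use $\gcd(a_1,e) = 1$ to force $a_1 \mid N$ for an explicit integer $N$; then the hypothesis $a_1 \ge r$ together with $(n-1)q \le a_1-1$ (and $a_1 = (q+1)(n-1)$ when $n-1 \mid a_1$) pins $N$ into the range $0 < N < a_1$, a contradiction. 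The genuine difficulty is keeping the three cut-offs consistent through this counting; the rest is routine bookkeeping with the explicit form of $A$ and the two rewriting identities.
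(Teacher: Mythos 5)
The paper offers no internal proof to compare against: this lemma is quoted verbatim from \cite[Cor.~4]{BGGM} and used as a black box, so your attempt has to stand on its own. As an outline it is on the right track. The reduction is correct: since $\mS_1\subseteq\bar{\mS}_1$ and both Apery sets are complete residue systems modulo $a_n$ (note this needs $\gcd(\mA_1)=1$, which holds because $\mA_1$ still contains $a_1$ and two consecutive terms), the element of $\Ap_1$ above $w\in A$ is $w+\gamma(w)a_n$, and everything comes down to deciding membership of $w, w+a_n,\dots$ in $\mS_1$. Your exhibited memberships all check out: $a_r+a_n=a_{r+1}+a_{n-1}$, $v_\mu+a_n=\mu a_1+a_3+a_{n-1}$, $a_{n-1}+qa_n=(q+e)a_1+a_{n-1-\ell}$ when $\ell\le n-2$, and $a_{n-1}+(q+1)a_n=(q+e)a_1+a_2+a_{n-2}$ when $n-1\mid a_1$; moreover the ranges of $\mu$ in the statement are exactly the $v_\mu$ lying in $A$, so your ``exactly for $\mu\ge q+e$ (resp.\ $q+e+1$)'' is consistent with the claim.

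The genuine content of the lemma, however, is precisely the part you defer: the non-membership statements $a_r\notin\mS_1$, $v_\mu\notin\mS_1$ for $\mu$ up to the cut-off, and $a_{n-1}+ja_n\notin\mS_1$ for $j<q$ (resp.\ $j<q+1$), and as described your uniform mechanism has a concrete hole. Writing a hypothetical representation over $\mA_1$ and comparing $a_1$-coefficients and $e$-slopes, $\gcd(a_1,e)=1$ gives $a_1\mid N$ for an explicit integer $N$; but $N=0$ is possible exactly when the two expressions use the same number of summands, and then no contradiction comes from ``$0<N<a_1$''. This equal-length branch must be closed by a separate slope argument: the removed generator is the only one whose $e$-slope can produce the required total. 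For instance, for $r=n-1$ a representation of $a_{n-1}+ja_n$ by $j+1$ generators other than $a_{n-1}$ would need total slope $(j+1)(n-1)-1$, i.e.\ deficit exactly $1$ from the all-$a_n$ maximum, impossible since every available generator has deficit $0$ or at least $2$; similarly slope $1$ is unattainable without $a_2$ in the $r=2$ case, and for $a_r$ the equal-length case is a single generator, excluded. With that branch added, the counting you sketch does close (e.g.\ for $r=2$, $N\neq 0$ forces the slope $T\equiv 1\pmod{a_1}$, hence $T\ge a_1+1$, and then $v_\mu\ge \frac{T}{n-1}a_1+Te$ exceeds $(q+e)a_1+e$, resp.\ $(q+e+1)a_1+e$, using $q(n-1)\le a_1-1$, resp.\ $a_1=(q+1)(n-1)$; for $r=n-1$ one also needs the easy observation that $T>(n-2)+j(n-1)+a_1$ is incompatible with fewer than $j+1$ summands), and the hypothesis $a_1\ge r$ enters exactly where you predict. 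So the route is viable, but as it stands the proposal is an outline: the decisive bounds that produce the three cut-offs are asserted rather than proved, and one subcase of the intended divisibility argument is missing.
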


\begin{proposition} \label{pr:quasiaritm}
Consider $a_1 < \ldots < a_{n}$ an arithmetic sequence of relatively prime integers with $n \geq 4$, and take $r\in \{2,\ldots,n-1\}$. Set $\mA_1 := \{a_1,\ldots,a_{n}\} \setminus \{a_r\}$, and let $\mS_1$ be the numerical semigroup generated by $\mA_1$, and $\mS$ its homogenization. Then,
\[(\AP_\mS, \leq_{\mS}) \simeq (\Ap_1, \leq_1)
\Longleftrightarrow
\begin{cases}
a_1> n-2 \text{ and } a_1\neq n,  & \text{if }r=2,\\
a_1\geq n \text{ and }r\leq a_1-n+1, & \text{if }3\leq r \leq n-2,\\
a_1 \geq n-2, & \text{if }r=n-1.
		 \end{cases}
\]
Consequently, if the previous condition holds, then $\beta_i(k[\mC_1]) = \beta_i(k[\mC])$, for all $i$.
\end{proposition}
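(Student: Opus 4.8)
The plan is to apply Proposition~\ref{prop:APS_Ap1_isomorphic}: the posets $(\AP_\mS,\leq_\mS)$ and $(\Ap_1,\leq_1)$ are isomorphic if and only if $k[\mC]$ is Cohen-Macaulay, $\{a_1,\ldots,a_{n-1}\}$ (in our relabeled setting, the first $n-1$ generators of the punctured sequence) is contained in $\MSG{\mS_1}$, and $(\Ap_1,\leq_1)$ is graded. So I would split the argument into three parts, treating the cases $r=2$, $3\le r\le n-2$, and $r=n-1$ in parallel, using Lemma~\ref{lemma:Ap1_project} for the explicit description of $\Ap_1$ and Lemma~\ref{lemma:BGGM_2} for the arithmetic relations among the generators.

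First I would settle the Cohen-Macaulayness of $k[\mC]$ and the minimal-generation condition, as these translate into the purely numerical inequalities appearing on the right-hand side. The punctured sequence $a_1<\cdots<\widehat{a_r}<\cdots<a_n$ need not be arithmetic, so I would determine exactly when $a_i$ lies in the semigroup generated by the smaller surviving generators; since consecutive $a_i$ differ by $e$ and $\gcd(a_1,e)=1$, the smallest element of $\mS_1$ congruent to $a_i$ modulo the relevant lattice comes from $a_1$, and a short computation (exactly as in the proof of Proposition~\ref{pr:aritm}, adjusted for the missing term) shows minimal generation fails precisely when $a_1$ is small relative to $n$ and $r$ — this produces the bounds $a_1>n-2$ and $a_1\neq n$ when $r=2$, $a_1\ge n$ and $r\le a_1-n+1$ when $3\le r\le n-2$, and $a_1\ge n-2$ when $r=n-1$. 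For Cohen-Macaulayness, the hypothesis $a_1\ge r$ in Lemma~\ref{lemma:Ap1_project} guarantees the clean description of $\Ap_1$, and I would check criterion~\ref{prop:caractCM}\ref{prop:caractCMd} (or equivalently~\ref{prop:caractCMf}) directly from that description together with the dual Apery set $\Ap_2$, which corresponds to another punctured arithmetic sequence.

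Next I would verify that $(\Ap_1,\leq_1)$ is graded whenever the stated inequalities hold, using Proposition~\ref{prop:CoverRel&RankFunct}~\ref{prop:CoverRel_a2}: I must show that every element of $\Ap_1$ has all its factorizations into the minimal generators $\mA_1$ of the same length. The base Apery set $A$ of $\langle a_1,\ldots,a_n\rangle$ with respect to $a_n$ is graded (this is essentially the content used in Proposition~\ref{pr:aritm} via \cite[Thm.~3.4]{LPR}); the sets obtained by the surgery in Lemma~\ref{lemma:Ap1_project} differ from $A$ only by removing a controlled family of elements and adding back shifted copies, and the relation in Lemma~\ref{lemma:BGGM_2}\ref{lemma:BGGM_2a} is exactly what is needed to check that the new elements $v_\mu+a_n$ (resp. $a_r+a_n$, resp. $a_{n-1}+(q+1)a_n$) inherit a consistent rank. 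Conversely, when an inequality fails I would exhibit either a non-minimal generator among $a_1,\ldots,a_{n-1}$ or an element of $\Ap_1$ with two factorizations of different lengths, showing the isomorphism cannot hold.

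The main obstacle I anticipate is the bookkeeping in the graded-ness verification for $r=2$, where a whole interval $\{v_\mu\mid 0\le\mu\le q+e\}$ (or $q+e-1$) is excised and re-inserted shifted by $a_n$: one has to rule out that the reinserted elements acquire a shorter factorization through the missing generator $a_2$, and this is where Lemma~\ref{lemma:BGGM_2}\ref{lemma:BGGM_2b} — which pins down the least $m$ with $ma_1\in\langle a_2,\ldots,a_n\rangle$ — becomes essential, together with the precise cutoff $q+e$ versus $q+e-1$ according to whether $n-1\mid a_1$. The cases $3\le r\le n-2$ and $r=n-1$ are comparatively mild since only a single element is moved. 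Once the three ingredients of Proposition~\ref{prop:APS_Ap1_isomorphic} are in place, the final conclusion $\beta_i(k[\mC_1])=\beta_i(k[\mC])$ for all $i$ is immediate from Theorem~\ref{thm:BettiPosets}.
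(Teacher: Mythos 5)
Your overall framework is the same as the paper's: reduce to Proposition~\ref{prop:APS_Ap1_isomorphic} (Cohen--Macaulayness of $k[\mC]$, minimal generation of $\{a_1,\ldots,a_n\}\setminus\{a_r\}$, gradedness of $(\Ap_1,\leq_1)$), split into the cases $r=2$, $3\leq r\leq n-2$, $r=n-1$, and run the verification on the explicit Apery sets of Lemma~\ref{lemma:Ap1_project} using the relations of Lemma~\ref{lemma:BGGM_2}; the final step via Theorem~\ref{thm:BettiPosets} is also as in the paper. However, there is a concrete error in how you apportion the numerical conditions among these three ingredients. You claim that the minimal-generation analysis alone "produces the bounds $a_1>n-2$ and $a_1\neq n$ when $r=2$, $a_1\ge n$ and $r\le a_1-n+1$ when $3\le r\le n-2$, and $a_1\ge n-2$ when $r=n-1$." This is false: minimal generation only accounts for $a_1>n-2$ (resp.\ $a_1\geq n-2$ when $r=n-1$), and Cohen--Macaulayness (which the paper reads off from \cite[Cor.~5]{BGGM} rather than re-deriving) only accounts for $r\leq a_1$ or $r=n-1$. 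The sharper cutoffs $a_1\neq n$ (for $r=2$) and $a_1\geq n$, $r\leq a_1-n+1$ (for $3\leq r\leq n-2$) are decided entirely by the gradedness of $(\Ap_1,\leq_1)$. For instance, with $r=2$ and $a_1=n$ all of $a_1,a_3,\ldots,a_{n-1}$ remain minimal generators and $\mC$ is arithmetically Cohen--Macaulay, yet the element $a_2+a_n=a_3+a_{n-1}\in\Ap_1$ also equals $(2+e)a_1$, so the poset is not graded; similarly, for $3\leq r\leq n-2$ with $a_1=n-1$ or $r>a_1-n+1$ the obstruction is an element $a_r+a_n=a_{r+1}+a_{n-1}$ of $\Ap_1$ admitting a longer factorization, not a failure of minimal generation or of Cohen--Macaulayness.

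This matters because the part you defer as "bookkeeping" is exactly where those case distinctions are established, and it is the technical core of the argument: one must show that every factorization of the excised-and-reinserted elements ($\mu a_1+a_2+a_n$ for $r=2$, $a_r+a_n$ for middle $r$, $a_{n-1}+qa_n$ or $a_{n-1}+(q+1)a_n$ for $r=n-1$) has the expected length, by normalizing an arbitrary factorization via the relations $a_i+a_j=a_{i-1}+a_{j+1}$ to the form $\beta_1a_1+\epsilon a_m+\beta_na_n$ and then invoking membership in $\Ap_1$ together with Lemma~\ref{lemma:BGGM_2}\ref{lemma:BGGM_2b} (for $r=2$) or \ref{lemma:BGGM_2a} (for $r=n-1$), and conversely to exhibit the explicit unequal-length factorizations exactly at the excluded parameter values. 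Your plan does gesture at both directions, so the strategy is salvageable, but as written the first step asserts a false characterization of when minimal generation fails and would stall there; the correct division of labor is the one just described.
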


\begin{proof}
Denote by $\bar{\mS}_1$ the numerical semigroup generated by the whole arithmetic sequence $a_1<\dots<a_n$. Again, we use Proposition~\ref{prop:APS_Ap1_isomorphic} to characterize when the posets $(\Ap_1,\leq_1)$ and $(\AP_\mS,\leq_\mS)$ are isomorphic. 
Note that $\{a_1,\ldots,a_n\}\setminus \{a_r\} \subset \MSG{\mS_1}$ if and only if 
\begin{equation}\label{eq:apmsg}
\hbox{either } r\neq n-1 \hbox{ and } a_1 > n-2,  \hbox{ or }  r=n-1  \hbox{ and }  a_1 \geq n-2.
\end{equation} 
On the other hand, by \cite[Cor.~5]{BGGM}, $k[\mC]$ is \cm{} if and only if 
\begin{equation}\label{eq:apCM}
r\leq a_1 \hbox{ or } r=n-1.
\end{equation}
Finally, by Proposition~\ref{prop:CoverRel&RankFunct}~\ref{prop:CoverRel_a2},  $(\Ap_1,\leq_1)$ is graded if and only if 
\begin{equation}\label{eq:aphomog}
\forall b \in \Ap_1,\ 
    b = \sum_{i \notin \{r,n\}} \alpha_r a_i = \sum_{i \notin \{r,n\}} \beta_i a_i \quad\Longrightarrow \sum_{i \notin \{r,n\}} \alpha_i = \sum_{i \notin \{r,n\}} \beta_i.
\end{equation}
We split the proof into three cases depending on the value of $r$.

\smallskip
$\bullet$ \ $r=2$.

\smallskip
By \eqref{eq:apmsg}, if $(\AP_\mS, \leq_{\mS}) \simeq (\Ap_1, \leq_1)$, then $a_1>n-2$.
If $a_1=n$, the element $a_3+a_{n-1}=a_2+a_n$ of $\Ap_1$ can be written as $(2+e)a_1$, and hence $(\Ap_1,\leq_1)$ is not graded by \eqref{eq:aphomog}. 
Assume now that $a_1> n-2$ and $a_1\neq n$, and let us prove that $(\Ap_1,\leq_1)$ is graded in this case. 
By Lemma~\ref{lemma:Ap1_project}~\ref{lemma:Ap1_project(2)}, 
\[ \Ap_1 = \left(A \setminus \{v_\mu \mid 0 \leq \mu \leq t\} \right) \cup \{v_\mu + a_n \mid 0 \leq \mu \leq t\},\]
for $t \in \{q+e-1,q+e\}$. 
Every $b \in A \cap \Ap_1$ satisfies (\ref{eq:aphomog}) by Proposition~\ref{pr:aritm}, so consider $b_\mu:= \mu a_1+a_2+a_n = \mu a_1+a_3+a_{n-1} \in \Ap_1$, with $0 \leq \mu \leq t$. Let us prove that whenever  $b_\mu = \sum_{i\notin \{2,n\}} \alpha_i a_i$, with $\alpha_i \in \N$, then $\sum_{i \notin \{2,n\}} \alpha_i = \mu+2$. 

Using iteratively the relations $a_i+a_j = a_{i-1}+a_{j+1}$ in $\bar{\mS}_1$, we get that
\[b_\mu = \sum_{i\notin \{2,n\}} \alpha_i a_i = \beta_1a_1+\epsilon a_m+\beta_n a_n\] 
for some $m$, $2 \leq m \leq n-1$, $\epsilon \in \{0,1\}$, and $\beta_1,\beta_n \in \N$ such that $\sum_{i\notin \{2,n\}} \alpha_i = \beta_1+\epsilon+\beta_n$. 

If $\epsilon=0$ or $m\neq 2$, then $a_2$ is not involved in the expression $b_\mu = \beta_1 a_1+\epsilon a_m+\beta_n a_n$, so $\beta_n=0$ since $b_\mu\in\Ap_1$. Thus, $b_\mu =\mu a_1+a_2+a_n = \beta_1 a_1+\epsilon a_m$, and hence
\begin{equation}\label{eq:proj_caso2}
(\beta_1-\mu) a_1= a_2+a_n-\epsilon a_m .
\end{equation}
If $\epsilon = 0$, $a_1$ divides $a_2+a_n=2a_1+ne$, and hence $a_1$ divides $n$ which is impossible since $a_1\geq n-1$ and $a_1\neq n$.
Now if $\epsilon = 1$ and $m\neq 2$, \eqref{eq:proj_caso2} implies that  
$(\beta_1-\mu) a_1= a_2+a_n-a_m=a_1+(n-m+1)e$,
and hence $a_1 \mid n-m+1$, a contradiction since $a_1\geq n-1>n-m+1$.
Thus, $\epsilon =1$ and $m=2$, i.e., 
$b_\mu= \sum_{i\notin \{2,n\}} \alpha_i a_i
= \mu a_1+a_2+a_n 
=\beta_1a_1+a_2+\beta_n a_n$.

Note that since $\beta_1a_1+a_2$ cannot be transformed into $\sum_{i\notin \{2,n\}} \alpha_i a_i$ using the relations $a_i+a_j = a_{i-1}+a_{j+1}$ in $\bar{\mS}_1$, we have that $\beta_n\neq 0$. Moreover, $(\mu-\beta_1)a_1 = (\beta_n-1)a_n$ and $\mu-\beta_1 < q+e+1$ since $\mu \leq t\leq q+e$. By Lemma~\ref{lemma:BGGM_2}~\ref{lemma:BGGM_2b}, this implies that $\mu=\beta_1$ and $\beta_n=1$, and we have shown that $\sum_{i\notin \{2,n\}} \alpha_i = \beta_1+\beta_n+1=\mu+2$.

\smallskip
$\bullet$ \ $3\leq r\leq n-2$.

\smallskip
By \eqref{eq:apmsg} and \eqref{eq:apCM}, the conditions $a_1 \geq n-1$ and $r\leq a_1$ are necessary for 
$(\AP_\mS, \leq_{\mS})$ and $(\Ap_1, \leq_1)$ to be isomorphic, and by Lemma~\ref{lemma:Ap1_project}~\ref{lemma:Ap1_project(3-n2)}, $\Ap_1 = \left( A \setminus \{a_r\} \right) \cup \{a_r+a_n\}$. Using Proposition~\ref{pr:aritm}, we get that $(\Ap_1,\leq_1)$ is graded if and only if every way of writing $a_r+a_n$ in terms of minimal generators of $\mS_1$ has the same number of summands, which is two since $a_r+a_n=a_{r+1}+a_{n-1}$. 

Now, if $a_1 = n-1$, then $a_{r+1}+a_{n-1} = ea_1+a_2+a_{r-1}$, and if $r>a_1-n+1$, then $ a_{r+1}+a_{n-1} = (2+e)a_1+(r-a_1+n-2)e = (1+e)a_1+a_{r-a_1+n-1}$. Thus, in both cases $(\Ap_1,\leq_1)$ is not graded.
Conversely, assume that $a_1 \geq n$ and $r \leq a_1 - n +1$. If $a_r+a_n=2a_1+(n+r-2)e$ can be written using more than $2$ minimal generators of $\mS_1$, then there exists 
$\mu\geq 3$ (the number of minimal generators involved), and
$m\geq 0$, such that $a_r+a_n=\mu a_1+me$. Then, $m\leq n+r-3$ and $a_1$ divides $n+r-2-m$, a contradiction since $a_1> n+r-2\geq n+r-2-m$.

\smallskip
$\bullet$ \ $r = n-1$.

\smallskip
By \eqref{eq:apmsg} and \eqref{eq:apCM}, we only have to show in this case that if $a_1\geq n-2$, then $(\Ap_1,\leq_1)$ is graded, i.e., using Lemma~\ref{lemma:Ap1_project}~\ref{lemma:Ap1_project(n1)} and Proposition~\ref{pr:aritm}, that \eqref{eq:aphomog} holds for $b=a_{n-1} + (q+1) a_{n}$ when $n-1\mid a_1$, and $b=a_{n-1} + q a_{n}$ otherwise.

Assume that $n-1$ does not divide $a_1$, and consider the element $b = a_{n-1} + q a_{n}$ in $\Ap_1$. By Lemma~\ref{lemma:BGGM_2}~\ref{lemma:BGGM_2a}, there exists $j\in \{1,\ldots,n-2\}$ such that $b = (q+e)a_1+a_j$, and hence we have to show that whenever $b = \sum_{i=1}^{n-2} \alpha_i a_i$ with $\alpha_i\in \N$, then $\sum_{i=1}^{n-2} \alpha_i = q+e+1$. As in the case $r=2$, using iteratively the equalities $a_i + a_j = a_{i-1} + a_{j+1}$ in $\bar{\mS}_1$, we get that 
\[b = \sum_{i=1}^{n-2} \alpha_i a_i = \beta_1a_1+\epsilon a_m+\beta_n a_n\] 
for some $m$, $2 \leq m \leq n-1$, $\epsilon \in \{0,1\}$, and $\beta_1,\beta_n \in \N$ such that $\sum_{i=1}^{n-2} \alpha_i = \beta_1+\epsilon+\beta_n$.

If $\beta_n > 0$, since $b \in \Ap_1$, we have that $b - a_n =  \beta_1 a_1 + \epsilon a_m + (\beta_n-1) a_n  \notin \mS_1$, and hence $\epsilon = 1$ and $m = n-1$, i.e., $b - a_n =  \beta_1 a_1 + a_{n-1} + (\beta_n-1) a_n $.
But this is also equal to $(\beta_1-1) a_1 + a_2 + a_{n-2} + (\beta_n-1) a_n$
so $\beta_1=0$ (otherwise $b - a_n\in \mS_1$). Thus, $b=a_{n-1}+\beta_n a_n$
that cannot be transformed into $\sum_{i=1}^{n-2} \alpha_i a_i$ using the relations $a_i + a_j = a_{i-1} + a_{j+1}$ in $\bar{\mS}_1$, a contradiction. This shows that $\beta_n=0$.

Then $b = \beta_1 a_1 + \epsilon a_m = (q+e) a_1 + a_j$. Since $\{a_1,\ldots,a_{n-2}\} \subset \MSG{S_1}$, we deduce that $\epsilon = 1$, $m = j$, and $\beta_1=q+e$. Hence, $\sum_{i = 1}^{n-2} \alpha_i = \beta_1 + \epsilon +\beta_n= q+e+1$, and we are done in this case.

When $n-1$ divides $a_1$, consider $b = a_{n-1} + (q+1) a_{n}$ in $\Ap_1$, and the relation $b = (q+e+1) a_1 + a_{n-1}$ given by Lemma~\ref{lemma:BGGM_2}~\ref{lemma:BGGM_2a}, and an analogue argument works. 
\end{proof}

\begin{example}
For the arithmetic sequence $9<10<11<12<13$, the parameters are $a_1 = 9$, $e=1$ and $n=5$. By Proposition~\ref{pr:aritm}, the Betti sequences of $k[\mC_1]$ and $k[\mC]$ coincide.  Indeed, it is $(1,10,20,15,4)$ for both curves. Now the Betti sequences of $k[\pi_r(\mC_1)]$ and $k[\pi_r(\mC)]$ also coincide for all values of $r$, $1\leq r\leq 5$: they coincide for $r=1$ and $5$ as observed before Lemma~\ref{lemma:BGGM_2}, and for $r=2,3,4$ by Proposition~\ref{pr:quasiaritm}. One can check that the sequence is 
$(1,6,8,3)$ for $r=1$, $(1,5,6,2)$ for $r=2$ and $4$, $(1,8,12,5)$ for $r=3$, and $(1,4,5,2)$ for $r=5$. 
\end{example}

\begin{example}\label{ex:quasiaritm2}
Consider the arithmetic sequence $9<10<11<12<13<14<15$, whose parameters are $a_1=9$, $e=1$ and $n=7$. By Proposition~\ref{pr:aritm}, the Betti sequences of $k[\mC_1]$ and $k[\mC]$ coincide, and one can check that it is $(1,19,58,75,44,11,2)$ for both the affine and the projective monomial curves. Now using the notations in Proposition~\ref{pr:quasiaritm}, one has that $\Ap_1$ and $\AP_\mS$ are isomorphic \iff{} $r \in \{2,3,6\}$. Hence, the Betti sequences of $k[\pi_r(\mC_1)]$ and $k[\pi_r(\mC)]$ coincide for those values of $r$ by Theorem~\ref{thm:BettiPosets}, and also for $r=1$ and $7$ as observed before Lemma~\ref{lemma:BGGM_2}. On the other hand, one can check using \cite{Singular} that the Betti sequences of $k[\pi_r(\mC_1)]$ and $k[\pi_r(\mC)]$ do not coincide for $r \in \{4,5\}$. Table \ref{tab:BettiSeq2} shows the Betti sequences of $k[\pi_r(\mC_1)]$ and $k[\pi_r(\mC)]$ for all $r$, $1\leq r \leq 7$.
\end{example}

\begin{table}[htbp]
\centering
\caption{Betti sequences in Example \ref{ex:quasiaritm2}.}
\label{tab:BettiSeq2}
\begin{tabular}{c@{\hspace{2.5em}}l@{\hspace{2.5em}}l}
\toprule
$r$ & $k[\pi_r(\mC_1)]$ & $k[\pi_r(\mC)]$ \\
\midrule
$1$ & $(1,11,30,35,19,4)$ & $(1,11,30,35,19,4)$ \\
$2$ & $(1,12,25,21,10,3)$ & $(1,12,25,21,10,3)$ \\
$3$ & $(1,13,30,29,14,3)$ & $(1,13,30,29,14,3)$ \\
$4$ & $(1,12,{\color{red}27,27},14,3)$ & $(1,12,{\color{red}29,29},14,3)$ \\
$5$ & $(1,{\color{red}12,25,21,10},3)$ & $(1,{\color{red}13,30,29,14},3)$ \\
$6$ & $(1,12,25,21,10,3)$ & $(1,12,25,21,10,3)$ \\
$7$ & $(1,12,25,25,14,3)$ & $(1,12,25,25,14,3)$ \\
\bottomrule
\end{tabular}
\end{table}

\section{Improving Vu's bound for equality of Betti numbers of a projective monomial curve and its projection}\label{sec:vu}

Take a sequence of nonnegative integers $0 = c_1 < \cdots < c_n$, not necessarily relatively prime, and consider, for all $j>0$, the shifted set of integers $\mA_1^j = \{c_1+j,\ldots,c_n+j\}$, and the semigroup $\mS_1^{j}$ generated by the sequence $a_0:=0<a_1:=c_1+j<\dots<a_n:=c_n+j$. Herzog and Srinivasan conjectured that the Betti numbers of $k[\mS_1^{j}]$ eventually become periodic with period $c_n$. In \cite{Vu}, Vu provides a proof of this conjecture together with an explicit value $N$ such  that this periodic behavior occurs for all $j > N$. One of the key steps in Vu's argument is \cite[Thm.~5.7]{Vu} where he proves that, for all $j > N$, the Betti numbers of the affine and projective monomial curves defined by $c_1 + j < \cdots < c_n + j$ coincide. We provide a smaller value of $N$ such that this occurs.

\begin{theorem}\label{thm:mejoravu} Let $0 = c_1 < \cdots < c_n$ be a sequence of nonnegative integers and set $N := (c_n - 1) (\sum_{i = 2}^{n-1} c_i)$. Then, for all $j \geq N$, the affine and projective monomial curves defined by the sequence $a_0=0 < a_1 = c_1 + j < \cdots < a_n = c_n + j$ have the same Betti numbers.
\end{theorem}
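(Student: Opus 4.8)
The strategy is to invoke Theorem~\ref{thm:BettiPosets}: it suffices to show that for $j \geq N$, the posets $(\AP_{\mS}, \leq_{\mS})$ and $(\Ap_1, \leq_1)$ associated to the sequence $0 < c_1+j < \cdots < c_n+j$ are isomorphic, and by Proposition~\ref{prop:APS_Ap1_isomorphic} this amounts to checking three things: that $k[\mC]$ is Cohen-Macaulay, that $\{c_1+j, \ldots, c_{n-1}+j\}$ lies in $\MSG{\mS_1}$, and that $(\Ap_1, \leq_1)$ is graded. The second point is the easiest: for $j$ even moderately large (certainly $j \geq N$, which grows at least linearly), no $c_i+j$ can be a sum of two or more of the others since the smallest such sum is roughly $2j$, so all $n-1$ of the relevant generators are minimal. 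So the real content is the Cohen-Macaulayness of $k[\mC]$ and the gradedness of $(\Ap_1, \leq_1)$.

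For the gradedness of $(\Ap_1,\leq_1)$, I would use the criterion in Proposition~\ref{prop:CoverRel&RankFunct}~\ref{prop:CoverRel_a2}: every element of $\Ap_1$ must have all its factorizations (in terms of $\MSG{\mS_1}\setminus\{a_n\} = \{a_1,\ldots,a_{n-1}\}$) of the same length. Suppose $y \in \Ap_1$ has two factorizations $y = \sum_{i=1}^{n-1}\alpha_i a_i = \sum_{i=1}^{n-1}\beta_i a_i$ with $|\alpha| > |\beta|$. Writing $a_i = c_i + j$ and subtracting, one gets $(|\alpha|-|\beta|)j = \sum \beta_i c_i - \sum \alpha_i c_i$; in particular $\sum \alpha_i c_i < \sum \beta_i c_i \le |\beta| \cdot c_{n-1}$, which forces $|\beta|$ (and hence $|\alpha|$) to be bounded. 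Combined with the fact that $y \in \Ap_1$ means $y - a_n \notin \mS_1$, i.e.\ $y$ cannot "absorb" a copy of $a_n = c_n + j$, one obtains an upper bound on $y$ itself of the form $y \le (\text{const})\cdot j$. The arithmetic then shows that for $j$ large enough, such a collision would produce $y - a_n \in \mS_1$ after all, a contradiction — and the threshold that makes this work is exactly $j \ge (c_n-1)(\sum_{i=2}^{n-1} c_i)$. This is the step I expect to be the main obstacle: pinning down the exact inequality on $y$ and showing that the two factorizations of differing length, once $j$ is at least $N$, can be "shifted" to reveal a membership in $\mS_1$ contradicting $y \in \Ap_1$. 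For the Cohen-Macaulay claim, I would similarly analyze when condition~\ref{prop:caractCM}~\ref{prop:caractCMd} can fail: a failure means there exist $q_1 \in \Ap_1$, $q_2 \in \Ap_2$ with $q_1 + q_2 \equiv 0 \pmod d$ (here $d = c_n + j$) but $(q_1,q_2) \notin \mS$. Bounding $q_1$ and $q_2$ in terms of $j$ (using that they lie in Apery sets, hence are "small" multiples of the generators) and then exploiting the homogenization relations $a_i + a_j = a_{i-1} + a_{j+1}$-type identities — more precisely the shifted analogues — shows that for $j \ge N$ the pair $(q_1, q_2)$ must already be in $\mS$. The constant $N$ should be chosen as the common threshold handling both the gradedness and the Cohen-Macaulay arguments; it is plausible that the Cohen-Macaulay bound is the dominant one and gives precisely $(c_n-1)(\sum_{i=2}^{n-1}c_i)$.

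A cleaner route for the Cohen-Macaulay part might be to use the characterization~\ref{prop:caractCM}~\ref{prop:caractCMf} (no minimal generator of $\ini_>(I_{\mA_1})$ is divisible by $x_n$) together with known structural results on $\ini_>(I_{\mA_1^j})$ for the shifted family — the shifted family has been studied precisely so that the Gröbner basis stabilizes for large $j$. But since we want an explicit numerical bound rather than an asymptotic statement, the hands-on approach bounding elements of the Apery sets by linear functions of $j$ is probably the one to carry out in full. In summary: reduce via Theorem~\ref{thm:BettiPosets} and Proposition~\ref{prop:APS_Ap1_isomorphic} to the three combinatorial conditions, dispatch the minimal-generators condition immediately, and then show that both the gradedness of $\Ap_1$ and the Cohen-Macaulayness of $k[\mC]$ are forced once $j \ge (c_n-1)\sum_{i=2}^{n-1} c_i$ by bounding all relevant Apery elements linearly in $j$ and deriving a contradiction from any would-be obstruction.
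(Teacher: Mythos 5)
Your overall strategy -- reduce via Theorem~\ref{thm:BettiPosets} and Proposition~\ref{prop:APS_Ap1_isomorphic} to showing that $k[\mC]$ is Cohen--Macaulay, that $\{a_1,\ldots,a_{n-1}\}\subset \MSG{\mS_1}$, and that $(\Ap_1,\leq_1)$ is graded -- is viable in principle: by Propositions~\ref{prop:APS_Ap1_isomorphic} and~\ref{prop:Ap1graded_Sengupta} these conditions are (in the Cohen--Macaulay situation) equivalent to the Gr\"obner criterion of \cite[Thm.~4.1]{Sengupta2023}, which is exactly what the paper verifies. However, as written there is a genuine gap: the two steps in which the specific value $N=(c_n-1)\sum_{i=2}^{n-1}c_i$ must actually enter -- the gradedness of $(\Ap_1,\leq_1)$ and the Cohen--Macaulayness of $k[\mC]$ -- are only announced as expectations (``the arithmetic then shows'', ``it is plausible that''), with no argument producing the bound. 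The missing idea is the one the paper uses: if $\sum_i\alpha_i a_i=\sum_i\beta_i a_i$ with $|\alpha|>|\beta|$ and $\beta_n=0$ (or with $\alpha_n\geq 1$, $\beta_n=0$), then subtracting the two expressions gives $\sum_{i=2}^{n-1}\beta_i c_i \geq (|\alpha|-|\beta|)j \geq j\geq N$, so by pigeonhole some $\beta_i\geq c_n$ with $2\leq i\leq n-1$; then the homogeneous relation $c_n a_i=(c_n-c_i)a_1+c_i a_n$ (valid precisely because $c_1=0$), i.e.\ $x_i^{c_n}-x_1^{c_n-c_i}x_n^{c_i}\in I_{\mA_1^j}$, lets one trade $x_i^{c_n}$ for a monomial divisible by $x_n$, contradicting either membership in $\Ap_1$ (in your formulation) or the reducedness of the Gr\"obner basis (in the paper's). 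Note also that what gets bounded is the $c$-weight $\sum\beta_ic_i$ of a factorization, not the Apery element $y$ itself, so your plan of ``bounding $y$ linearly in $j$'' is not the right invariant and would not, by itself, reveal where $N$ comes from.

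For comparison, the paper bypasses the posets entirely: it proves that for $j\geq N$ every binomial $\bx^\alpha-\bx^\beta$ in the reduced degrevlex Gr\"obner basis of $I_{\mA_1^j}$ has leading term not divisible by $x_n$ and, when non-homogeneous, trailing term divisible by $x_n$, and then invokes \cite[Thm.~4.1]{Sengupta2023} directly; this also covers the case $\gcd(a_1,\ldots,a_n)>1$ (the ideal is unchanged after dividing by the gcd), whereas your route needs an extra remark since the Apery/poset machinery of Sections~1--2 is set up for relatively prime sequences. Finally, your ``easiest point'' is stated too quickly: $N$ is a fixed constant (it does not ``grow''), and at the boundary $j=N$ the condition $\{a_1,\ldots,a_{n-1}\}\subset\MSG{\mS_1}$ can fail -- e.g.\ for $c=(0,1,2)$ one has $N=1$ and $j=1$ gives the sequence $1<2<3$ with $a_2=2a_1$ -- so the minimal-generator step really requires $j>c_{n-1}$ and the case $j=N$ is genuinely delicate (it is delicate for the strict inequality in the paper's own chain of estimates as well). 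In short: the reduction is sound, but the quantitative heart of the proof -- pigeonholing some $\beta_i\geq c_n$ and exploiting the relation $c_na_i=(c_n-c_i)a_1+c_ia_n$ -- is missing from the proposal.
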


\begin{proof}
Take $j\geq N$. Let $\mG_>^j$ be the reduced Gr\"obner basis of $I_{\mA_1^j}$ with respect to the degrevlex order with $x_1 > \cdots > x_n$, and consider $f = \bx^{\alpha} - \bx^{\beta} \in \mG_>^j$ with $\bx^{\alpha} > \bx^{\beta}$. 
If we show that
\begin{enumerate}[(a)]
    \item\label{thm:mejoravu_a} $x_n$ does not divide $\bx^\alpha$, and
    \item\label{thm:mejoravu_b} if $f$ is not homogeneous, then $x_n$ divides $\bx^\beta$,
\end{enumerate}
then  the result follows from \cite[Thm.~4.1]{Sengupta2023}. Note that this result is true even if the generators of the semigroup are not relatively prime since the defining ideal does not change when we divide them by a common divisor.
\newline

If $x_n$ divides $\bx^\alpha$, then $x_n$ does not divide $\bx^\beta$, and hence $|\alpha| > |\beta|$. Thus,
\[ N=(c_n - 1) (\sum_{i = 2}^{n-1} c_i) \leq j \leq (|\alpha|-|\beta|)j<
\sum_{i = 1}^n (\alpha_i - \beta_i) j + \sum_{i = 2}^{n} \alpha_i c_i = \sum_{i = 2}^{n-1} \beta_i c_i \, . 
\] 
This implies that there exists $i \in \{2,\ldots,n-1\}$ such that $\beta_i \geq c_n$. If we consider the monomial  
$\bx^{\gamma}:=  \frac{\bx^{\beta} x_1^{c_n - c_i}  x_n^{c_i}}{x_i^{c_n}}$,
then the homogeneous binomial $g = \bx^{\beta} - \bx^{\gamma}$ belongs to 
$I_{\mA_1^j}$ because the homogeneous binomial $x_i^{c_n} - x_1^{c_n - c_i}  x_n^{c_i}$ belongs to $I_{\mA_1^j}$.  As $x_n$ divides $\bx^{\gamma}$ and does not divide $\bx^{\beta}$, $\ini_>(g) = \bx^{\beta} \in \ini_>(I_{\mA_1^j})$, a contradiction because $\mG_>^j$ is reduced and $f\in\mG_>^j$. This shows that that $x_n$ does not divide $\bx^\alpha$, and \ref{thm:mejoravu_a} is proved.
\newline

Now assume that $f$ is not homogeneous, i.e., $|\alpha| > |\beta|$, and that $x_n$ does not divide $\bx^\beta$. By \ref{thm:mejoravu_a}, $x_n$ does not divide $\bx^{\alpha}$ either, and hence
\[ N=(c_n - 1) (\sum_{i = 2}^{n-1} c_i) \leq j \leq (|\alpha|-|\beta|)j<
 \sum_{i = 1}^{n-1} (\alpha_i - \beta_i) j + \sum_{i = 2}^{n-1} \alpha_i c_i = \sum_{i = 2}^{n-1} \beta_i c_i \, . \]
Thus, there exists $i \in \{2,\ldots,n-1\}$ such that $\beta_i \geq c_n$. 
Using exactly the same argument as before for \ref{thm:mejoravu_a}, we get a contradiction, and hence \ref{thm:mejoravu_b} is proved.
\end{proof}

\begin{corollary}\label{cr:improveVu} Let $a_1 < \cdots < a_{n}$ be a sequence of positive integers, and set $M := a_n + (a_n -1)(\sum_{i = 1}^{n-1} (a_n - a_i))$. Then, for all $j \geq M$, the  projective monomial curve defined by the sequence $a_1 < \cdots < a_{n} < j$ is arithmetically Cohen-Macaulay.
\end{corollary}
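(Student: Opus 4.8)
The plan is to reduce the statement to Theorem~\ref{thm:mejoravu} by passing to the dual sequence. Prepend the leading $0$ as usual, so that the curve in question is the projective monomial curve $\mC$ of degree $d=j$ defined by $0<a_1<\cdots<a_n<j$ (a sequence of $n+2$ terms). Its dual sequence is $0<j-a_n<j-a_{n-1}<\cdots<j-a_1<j$, and the projective monomial curve $\mC^{\ast}$ it defines is related to $\mC$ by a linear change of coordinates in $\P^{n+1}$, namely the reversal of the homogeneous coordinates; hence $k[\mC^{\ast}]\cong k[\mC]$ as graded $k$-algebras, and $\mC^{\ast}$ is \acm{} if and only if $\mC$ is. It therefore suffices to prove the statement for $\mC^{\ast}$.

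Dualizing is the one step that needs an idea. The shift parameter in Theorem~\ref{thm:mejoravu} occupies the second-smallest slot of the sequence; in $0<a_1<\cdots<a_n<j$ that slot is the fixed number $a_1$, so applying the theorem directly is hopeless, whereas after dualizing that slot becomes $j-a_n$, which grows with $j$. Concretely, set $c_k:=a_n-a_{n+1-k}$ for $k=1,\ldots,n+1$, with the convention $a_0:=0$, obtaining a sequence of nonnegative integers $0=c_1<c_2<\cdots<c_{n+1}=a_n$, and put $j':=j-a_n$. Then the dual sequence above is exactly $0<c_1+j'<c_2+j'<\cdots<c_{n+1}+j'$, i.e.\ the shifted sequence to which Theorem~\ref{thm:mejoravu} applies with base sequence $(c_k)_{k=1}^{n+1}$ and shift $j'$.

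It remains to match the two thresholds. For the base sequence $(c_k)_{k=1}^{n+1}$ the theorem requires $j'\geq(c_{n+1}-1)\sum_{i=2}^{n}c_i$ (the sum now running to $(n+1)-1=n$), and the reindexing $i\mapsto n+1-i$ gives
\[
\sum_{i=2}^{n}c_i=\sum_{i=2}^{n}(a_n-a_{n+1-i})=\sum_{i=1}^{n-1}(a_n-a_i),
\]
so that $(c_{n+1}-1)\sum_{i=2}^{n}c_i=(a_n-1)\sum_{i=1}^{n-1}(a_n-a_i)=M-a_n$. Thus $j\geq M$ is precisely the inequality $j'\geq(c_{n+1}-1)\sum_{i=2}^{n}c_i$ required, and Theorem~\ref{thm:mejoravu} yields that $k[\mC^{\ast}]$ and $k[(\mC^{\ast})_1]$ have the same Betti numbers. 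Since $(\mC^{\ast})_1$ is an affine monomial curve, $k[(\mC^{\ast})_1]$ is Cohen-Macaulay, so the equality of all Betti numbers forces ${\rm pd}(k[\mC^{\ast}])={\rm pd}(k[(\mC^{\ast})_1])$; as recalled in the introduction this means $\mC^{\ast}$, and hence $\mC$, is \acm.

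The only genuinely non-mechanical point is the decision to dualize before invoking Theorem~\ref{thm:mejoravu}; everything else is bookkeeping, the one spot needing some care being the index shift caused by the extra term $j$ (so the base sequence has $n+1$ rather than $n$ terms) together with the degenerate case $n=1$, in which $\mC$ is a plane curve and hence trivially \acm.
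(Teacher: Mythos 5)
Your proposal is correct and follows essentially the same route as the paper: dualize so that the appended large term $j$ becomes the shift parameter (the paper uses $b_i:=a_n-a_{n-i}$ and $l=j-a_n$, matching your $c_k$, $j'$), apply Theorem~\ref{thm:mejoravu} to the dual shifted sequence, and conclude arithmetic Cohen-Macaulayness from the equality of Betti numbers; the threshold bookkeeping also agrees with the paper's $(a_n-1)\sum_{i=1}^{n-1}(a_n-a_i)=M-a_n$.
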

\begin{proof}
Consider the sequence $b_0:=0 < b_1 := a_{n} - a_{n-1} < \cdots < b_{n-1} := a_{n} - a_1 < b_n := a_n$. By Theorem~\ref{thm:mejoravu}, one has that the projective monomial curve defined by $l<l+b_1 < \cdots < l+ b_n$ is arithmetically Cohen-Macaulay for all $l \geq (b_n - 1)(\sum_{i = 1}^{n-1} b_i) = (a_n-1)(\sum_{i = 1}^{n-1} (a_n-a_i))$. To finish the proof, it suffices to observe that the dual sequence of $0<a_1<\dots<a_n<l+a_n$ is $l<l+b_1<\dots<l+b_n$ and take $l+a_n=j$.
\end{proof}

\section{Construction of Gorenstein projective monomial curves}
\label{sec:gorenstein}

Since $\beta_i(k[\mC]) \geq \max\left( \beta_i(k[\mC_1]),\beta_i(k[\mC_2]) \right)$ for all $i$,
whenever $k[\mC]$ is Gorenstein, then so are $k[\mC_1]$ and $k[\mC_2]$. The converse of this statement is false; indeed, it could happen that $\mC_1$ and $\mC_2$ are both arithmetically Gorenstein and that $\mC$ is not even arithmetically Cohen-Macaulay, as can be seen in Example \ref{ex:Gorenstein}~\ref{ex:Gorenstein_a}. Actually, even if $k[\mC]$ is Cohen-Macaulay, it may happen that $k[\mC]$ is not Gorenstein, as Example \ref{ex:Gorenstein}~\ref{ex:Gorenstein_b} shows.

\begin{example}\label{ex:Gorenstein}
\begin{enumerate}[(a)]
    \item\label{ex:Gorenstein_a} The affine monomial curve $\mC_1$ defined by the sequence $4<9<10$ is an (ideal-theoretic) complete intersection and, thus, $k[\mC_1]$  is Gorenstein with Betti sequence $(1,2,1)$. The corresponding projective monomial curve is not arithmetically Cohen-Macaulay, indeed, the Betti sequence of $k[\mC]$ is $(1,5,6,2)$.
    \item\label{ex:Gorenstein_b} The affine monomial curve $\mC_1$ defined by the sequence $10<14<15<21$ is an (ideal-theoretic) complete intersection and, thus, $k[\mC_1]$ is Gorenstein with Betti sequence $(1,3,3,1)$. The corresponding projective monomial curve is arithmetically Cohen-Macaulay but not Gorenstein, indeed, the Betti sequence of $k[\mC]$ is $(1,4,5,2)$. 
\end{enumerate}
\end{example}

A numerical semigroup $\mS_1$ is symmetric if and only if either $b \in \mS_1$ or $F(\mS_1) - b \in \mS_1$ for all $b \in \N$, where $F(\mS_1) = \max \left( \N \setminus \mS_1 \right)$ is the Frobenius number of $\mS_1$. 
Kunz proved in \cite{Kunz} that $k[\mC_1]$ is Gorenstein if and only if $\mS_1$ is symmetric. In this section we show how to construct an arithmetically Gorenstein projective monomial curve from a symmetric numerical semigroup $\mT$. 
We begin with the following result, which provides a necessary and sufficient condition for $\mC$ to be arithmetically Gorenstein and is a consequence of the results in \cite{CN}.

\begin{proposition}\label{pr:caracterizaGorenstein}
Let $\mC$ be the projective monomial curve defined by the sequence $a_0=0 < a_1 < \cdots < a_n = d$ of relatively prime integers. Then, $\mC$ is arithmetically Gorenstein \iff{} $\mC$ is arithmetically Cohen-Macaulay, both $\mS_1$ and $\mS_2$ are symmetric, and $d$ divides $F(\mS_1) + F(\mS_2)$. 
\end{proposition}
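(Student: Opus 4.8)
The plan is to characterize arithmetic Gorensteinness by combining the Cohen-Macaulay criterion from Proposition~\ref{prop:caractCM} with a careful analysis of the type of $k[\mC]$ via its Apery set. First I would recall that, since $k[\mC_1]$ and $k[\mC_2]$ are always Cohen-Macaulay, Kunz's theorem \cite{Kunz} gives that $k[\mC_i]$ is Gorenstein \iff{} $\mS_i$ is symmetric, and that $k[\mC]$ Gorenstein forces $k[\mC_i]$ Gorenstein for $i=1,2$ by the inequality $\beta_i(k[\mC]) \geq \max(\beta_i(k[\mC_1]),\beta_i(k[\mC_2]))$; this already yields half of the necessary conditions plus arithmetic Cohen-Macaulayness. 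The remaining task is the equivalence, assuming $\mC$ is \acm{}, between ``$\mC$ arithmetically Gorenstein'' and ``$\mS_1$, $\mS_2$ symmetric and $d \mid F(\mS_1)+F(\mS_2)$''.

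For the core of the argument I would work with $\AP_\mS$, which by Proposition~\ref{prop:caractCM}~\ref{prop:caractCMc} equals $\{(0,0)\} \cup \{(r_i,t_{d-i}) : 1 \leq i < d\}$ when $\mC$ is \acm{}. Since $k[\mC]$ is then a $2$-dimensional Cohen-Macaulay ring, its type equals the number of maximal elements of the poset $(\AP_\mS,\leq_\mS)$ (this is the standard description of the Cohen-Macaulay type of a simplicial affine semigroup ring via the Apery set with respect to a system of parameters, here $\{\bfa_0,\bfa_n\}$). So $\mC$ is arithmetically Gorenstein \iff{} $(\AP_\mS,\leq_\mS)$ has a unique maximal element. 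By Proposition~\ref{pr:ordeninterseccion}, $(r_i,t_{d-i}) \leq_\mS (r_j,t_{d-j})$ \iff{} $r_i \leq_1 r_j$ and $t_{d-i} \leq_2 t_{d-j}$, so the poset $\AP_\mS$ is the ``twisted'' fiber product of $(\Ap_1,\leq_1)$ and $(\Ap_2,\leq_2)$ over the congruence $i \mapsto d-i$ on residues. The key classical fact I would invoke (this is Kunz's characterization at the level of Apery sets) is that $\mS_1$ is symmetric \iff{} $(\Ap_1,\leq_1)$ has a unique maximal element, namely $F(\mS_1)+d$; likewise for $\mS_2$ with unique maximal element $F(\mS_2)+d$. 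Then I must show the unique maximal element of $\AP_\mS$ exists \iff{} both $\Ap_i$ have unique maxima \emph{and} those two maxima are ``paired'' by the residue twist, i.e.\ $(F(\mS_1)+d) + (F(\mS_2)+d) \equiv 0 \pmod d$, which is exactly $d \mid F(\mS_1)+F(\mS_2)$.

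More precisely: if $\mS_1$, $\mS_2$ are symmetric and $d \mid F(\mS_1)+F(\mS_2)$, write $F(\mS_1)+d = r_{i_0}$, $F(\mS_2)+d = t_{j_0}$ with $i_0 + j_0 \equiv 0 \pmod d$; then $(r_{i_0}, t_{d-i_0}) = (r_{i_0},t_{j_0}) \in \AP_\mS$ and it dominates every $(r_i,t_{d-i})$ since $r_i \leq_1 r_{i_0}$ and $t_{d-i} \leq_2 t_{j_0}$ for all $i$, so $\AP_\mS$ has a unique maximal element and $\mC$ is arithmetically Gorenstein. Conversely, if $\AP_\mS$ has a unique maximal element $(r_{i_0},t_{d-i_0})$, I would argue it must dominate, componentwise, every element of $\Ap_1$ in the first coordinate (pick any $r_i$; the element of $\AP_\mS$ it sits in lies below $(r_{i_0},t_{d-i_0})$, forcing $r_i \leq_1 r_{i_0}$), hence $r_{i_0}$ is the unique maximum of $\Ap_1$, so $\mS_1$ is symmetric; symmetrically $\mS_2$ is symmetric; and since $(r_{i_0},t_{d-i_0})$ realizes both maxima simultaneously, $r_{i_0} = F(\mS_1)+d$ and $t_{d-i_0} = F(\mS_2)+d$, whence $F(\mS_1)+F(\mS_2) = r_{i_0}+t_{d-i_0} - 2d \equiv i_0 + (d-i_0) \equiv 0 \pmod d$. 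I expect the main obstacle to be pinning down rigorously the identification ``type of $k[\mC]$ $=$ number of maximal elements of $(\AP_\mS,\leq_\mS)$'' and the statement ``$\mS_i$ symmetric $\iff$ $\Ap_i$ has a unique maximal element (equal to $F(\mS_i)+d$)''; both are standard but I would cite them carefully (e.g.\ from \cite{CN} for the first, where the Gorenstein characterization of \acm{} projective monomial curves is developed, and a standard numerical semigroup reference for the second) rather than reprove them, and then the poset fiber-product bookkeeping above is routine.
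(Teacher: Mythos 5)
Your proposal is correct and follows essentially the same route as the paper: both reduce the question, via Proposition~\ref{prop:caractCM}~\ref{prop:caractCMc} and Proposition~\ref{pr:ordeninterseccion}, to whether the poset $(\AP_\mS,\leq_\mS)$ has a unique maximal element, and both invoke the characterization of Gorensteinness for \acm{} projective monomial curves from \cite{CN} (the paper cites \cite[Thm.~4.9]{CN}, which is exactly the fact you describe) together with the standard symmetry criteria (Kunz / unique maximal Apery element). The only cosmetic difference is that the paper proves necessity of the divisibility condition by exhibiting two distinct maximal elements $(F(\mS_1)+d,z)$ and $(y,F(\mS_2)+d)$ when $d\nmid F(\mS_1)+F(\mS_2)$, whereas you deduce it from the uniqueness of the maximal element; the content is the same.
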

\begin{proof} $(\Rightarrow)$ If $\mC$ is arithmetically Gorenstein, then $\mC$ is \acm{} and both $\mS_1$ and $\mS_2$ are symmetric by \cite[Lem.~2.6]{CN}. Assume now that $d$ does not divide $F(\mS_1) + F(\mS_2)$. By Proposition~\ref{prop:caractCM}~\ref{prop:caractCMc}, there exist $y \in \mS_1$ and $z \in \mS_2$ such that $(F(\mS_1) + d, z)$ and $(y,F(\mS_2)+d)$ are two different elements of $\AP_\mS$.  Moreover, by Proposition~\ref{pr:ordeninterseccion}, they are both maximal in the poset $(\AP_\mS,\leq_\mS)$, and hence,  $\mC$ is not arithmetically Gorenstein by \cite[Thm.~4.9]{CN}. \\
$(\Leftarrow)$ If $d$ divides $F(\mS_1) + F(\mS_2)$, then by Proposition~\ref{prop:caractCM}~\ref{prop:caractCMc}, $(F(\mS_1) + d, F(\mS_2) + d) \in \AP_\mS$ and by Proposition~\ref{pr:ordeninterseccion}, this element is the maximum of $(\AP_\mS,\leq_\mS)$. Hence, $\mC$ is arithmetically Gorenstein by \cite[Thm.~4.9]{CN}.
\end{proof}

Note that in the previous result, one cannot remove the condition of $k[\mC]$ being Cohen-Macaulay as Example \ref{ex:AfinGor_ProjNoCM} shows.

\begin{example} \label{ex:AfinGor_ProjNoCM}
For the sequence $6<7<8<15<16$, one has that the numerical semigroup $\mS_1 = \langle 6,7,8,15,16\rangle$ is symmetric, and $\mS_2 = \N$ is also symmetric. Moreover, $F(\mS_1) = 17$ and $F(\mS_2) = -1$, so $d=16$ divides $F(\mS_1)+F(\mS_2)$. But $k[\mC]$ is not Cohen-Macaulay, so it cannot be Gorenstein.
\end{example}

The following example provides a family of arithmetically Gorenstein projective curves. This example gives some insights on Theorem~\ref{th:gorenstein}, which is the main result of this section and shows how to construct a projective Gorenstein curve from a symmetric numerical semigroup.
For $a, b \in \mathbb Z$ with $a \leq b$, denote by $\llbracket a, b \rrbracket$ the integer interval $[a,b] \cap \Z$.

\begin{example} \label{rem:GorensteinFacil}
If  $m>3$ is an odd integer, one has that \[ \mS_1 = \left\langle (m+1)/2,\ldots,m-1 \right\rangle = \left\{0,(m+1)/2,\ldots,m-1,m+1,\rightarrow\right\}\] is a symmetric numerical semigroup with $F(\mS_1) = m$. Hence the ring $k[\mC_1]$ is Gorenstein. The sequence $\frac{m+1}{2}<\dots<m-1$ defines a projective curve of degree $d=m-1=F(\mS_1)-1$. We claim that $k[\mC]$ is Gorenstein. Note that 
$\Ap_1 = \{0\} \cup \llbracket \frac{m+1}{2},m-2 \rrbracket \cup \llbracket m+1, \frac{3}{2}(m-1) \rrbracket \cup \{2m-1\}$. Since $\mS_2 = \N$, we have that $F(\mS_2) = -1$  and $\Ap_2 = \llbracket 0,m-1 \rrbracket$.
By Proposition~\ref{pr:caracterizaGorenstein}, it only remains to check that $k[\mC]$ is Cohen-Macaulay. By Proposition~\ref{prop:caractCM}~\ref{prop:caractCMd}, $k[\mC]$ is Cohen-Macaulay because $B\subset \mS$, where $B \subset \N^2$ is the following set with $d$ elements: 
\[\{ (0,0)\} \cup \{(a,d-a) \mid \tfrac{m+1}{2}\leq a \leq m-2\} \cup \{ (d+g,d-g) \mid 1<g<\tfrac{m+1}{2} \} \cup \{(2d+1,d-1)\} \, .\]
\end{example}

We now generalize this to any symmetric numerical semigroup $\mT$ such that $\mT \neq \N$ and $\mT \neq \langle 2, a \rangle$ for some $a$ odd or, in other words, such that $2 \notin \mT$.
The idea under this construction is to consider the projective closure of the affine monomial curve parametrized by the so-called {\it small elements} in the semigroup, that is, all the elements in the numerical semigroup that are smaller than the Frobenius number. 
The precise statement of the result is the following.

\begin{theorem}\label{th:gorenstein} Let $\mT \subseteq \N$ be a symmetric numerical semigroup such that $2 \notin \mT$ and consider $\mT \cap \llbracket 0, F(\mT) - 1 \rrbracket = \{0,a_1,\ldots,a_n\}$ with $0<a_1 < \cdots < a_n$.
Then, the projective monomial curve defined by the sequence $a_1 < \cdots < a_n$ is arithmetically Gorenstein.
\end{theorem}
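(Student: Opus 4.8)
The plan is to verify the four hypotheses of Proposition~\ref{pr:caracterizaGorenstein} for the sequence $a_1<\cdots<a_n$, setting $d:=a_n$: that $\gcd(a_1,\dots,a_n)=1$, that $\mC$ is \acm, that $\mS_1$ and $\mS_2$ are symmetric, and that $d\mid F(\mS_1)+F(\mS_2)$. Throughout write $F:=F(\mT)$ and let $m:=a_1$ be the multiplicity of $\mT$.

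I would first record the elementary consequences of the hypotheses. Since $2\notin\mT$ one has $1\notin\mT$ (otherwise $\mT=\N$), hence $m\ge 3$; since $\mT$ is symmetric, $F$ is odd, and applying the symmetry of $\mT$ to $b=1$ and to $b=2$ yields $F-1,F-2\in\mT$. As the small elements of $\mT$ are exactly $\mT\cap\llbracket 0,F-1\rrbracket=\{0,a_1,\dots,a_n\}$, it follows that $d=a_n=F-1$ (an even number) and $a_{n-1}=F-2$. The key reduction is then that $\mS_1=\langle a_1,\dots,a_n\rangle=\mT$: every minimal generator $g$ of $\mT$ is a small element. Indeed, if $g\ge F+1$ then $g-m\notin\mT$ (otherwise $g=m+(g-m)$ is not a minimal generator), so by symmetry $F-(g-m)\in\mT$; as $0\le F-g+m\le m-1<m$, this forces $F-g+m=0$, i.e.\ $g=F+m$; but $F+m$ is not a minimal generator, since by the symmetry of the Apery set $\Ap(\mT,m)$ (which has $m-1\ge 2$ nonzero elements, pairing up in sums equal to $F+m$) one can write $F+m=w+w'$ with $w,w'$ nonzero elements of $\mT$, each necessarily $\le F-1$. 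Hence $\mT=\langle\MSG{\mT}\rangle\subseteq\langle a_1,\dots,a_n\rangle\subseteq\mT$, so $\mS_1=\mT$ is symmetric, $F(\mS_1)=F$, and $\gcd(a_1,\dots,a_n)=\gcd(\mT)=1$. On the other hand $d-a_{n-1}=(F-1)-(F-2)=1$ lies in $\mA_2$, so $\mS_2=\N$, which is symmetric with $F(\mS_2)=-1$; consequently $F(\mS_1)+F(\mS_2)=F-1=d$, and the required divisibility holds trivially.

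It remains to prove that $\mC$ is \acm, which is the heart of the matter. Since $\mS_2=\N$ we have $\Ap_2=\{0,1,\dots,d-1\}$, so $t_j=j$ in the notation of Proposition~\ref{prop:caractCM}, and by part~\ref{prop:caractCMd} it suffices to show that $(r_c,\,d-c)\in\mS$ for every $c\in\{1,\dots,d-1\}$, where $r_c$ denotes the least element of $\mT$ congruent to $c$ modulo $d$. Because $r_c\le F+d=2d+1$ and $F=d+1\notin\mT$, one has $r_c=c$ when $c\in\mT$, $r_c=d+c$ when $c\notin\mT$ and $c\ge 2$, and $r_1=2d+1$. If $c\in\mT$ then $(c,d-c)\in\mA\subseteq\mS$ trivially. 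For $c=1$, using $F-2,m\in\mT$ and $F-m+1\in\mT$ (the latter since $m-1\notin\mT$), one writes $2d+1=(F-2)+(F-m+1)+m$, so that $(2d+1,d-1)$ is the sum of the three elements $(F-2,2)$, $(F-m+1,m-2)$, $(m,d-m)$ of $\mA$ and lies in $\mS$. The remaining, and crucial, case is $c\notin\mT$ with $2\le c\le d-1$, where one must exhibit $d+c$ as a sum of two elements of $\mT\cap\llbracket 1,d\rrbracket$. Since $c\notin\mT$, symmetry gives $g:=F-c\in\mT$ with $3\le g\le d-1$ (note $F-2\in\mT$ forces $c\le F-3$), and writing each potential summand as $F$ minus a gap of $\mT$ reduces the task to the following claim: there exist gaps $p,q$ of $\mT$ with $p+q=g+1$. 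If $g-1\notin\mT$ one takes $(p,q)=(2,g-1)$. Otherwise I would argue by contradiction: if no such pair exists, then $g+1-p\in\mT$ for every gap $p\le g$, and applying this with $p=1,\dots,m-1$ forces the run $\llbracket g-m+2,\,g\rrbracket\subseteq\mT$ of $m-1$ consecutive elements of $\mT$; by symmetry this reflects to a maximal run of $m-1$ consecutive gaps, and a short further analysis of the gap structure of $\mT$ (using that $\{1,\dots,m-1\}$ is the initial run of gaps, that $m\in\mT$, and the symmetry $b\mapsto F-b$) produces a run of $m$ consecutive gaps of $\mT$ — impossible for a numerical semigroup of multiplicity $m$.

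Granting this last claim, all the elements $(r_c,d-c)$ lie in $\mS$, so $\mC$ is \acm{} by Proposition~\ref{prop:caractCM}, and then Proposition~\ref{pr:caracterizaGorenstein} shows that $\mC$ is arithmetically Gorenstein. I expect the one genuinely delicate point to be precisely the final combinatorial step — that $g+1$ is always a sum of two gaps of $\mT$, equivalently the full verification of the Cohen--Macaulay condition — while the identification of $d$, of $\mS_1$ with $\mT$ and of $\mS_2$ with $\N$, together with the divisibility, is routine bookkeeping with the symmetry of $\mT$.
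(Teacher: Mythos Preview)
Your overall plan matches the paper's: use Proposition~\ref{pr:caracterizaGorenstein}, identify $\mS_1=\mT$, $\mS_2=\N$, check the divisibility $d\mid F(\mS_1)+F(\mS_2)$, and reduce everything to the arithmetically Cohen--Macaulay property via Proposition~\ref{prop:caractCM}~\ref{prop:caractCMd}. Your bookkeeping is correct (apart from the harmless typo $(F-2,2)$, which should be $(F-2,1)$), and your reformulation of the Cohen--Macaulay condition as ``for every $g\in\mT$ with $3\le g\le F-2$, the integer $g+1$ is a sum of two gaps of $\mT$'' is a nice dualisation of what the paper does.

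The genuine gap is exactly where you flag it: the ``short further analysis'' in the case $g-1\in\mT$. From the contradiction hypothesis and the gaps $1,\dots,m-1$ you correctly obtain $\llbracket g-m+2,g\rrbracket\subset\mT$, and reflecting gives the $m-1$ consecutive gaps $\llbracket F-g,\,F-g+m-2\rrbracket$. But this run is \emph{bordered by multiples of $m$} on both sides: one checks (via the Ap\'ery symmetry $w_j+w_{r-j}=F+m$, where $r\equiv g+1\pmod m$) that $F-g-1$ and $F-g+m-1$ are both multiples of $m$, hence in $\mT$. So the run cannot be extended to $m$ consecutive gaps, and in fact the contradiction hypothesis forces $\mT\cap\llbracket1,Nm\rrbracket$ to consist \emph{only} of multiples of $m$ for larger and larger $N$ --- the contradiction is that $\mT$ would have no non-multiple of $m$ at all, not that a run of $m$ gaps appears. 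Your stated route to the contradiction is therefore incorrect as written.

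What is actually needed is to use gaps beyond $\llbracket 1,m-1\rrbracket$. The paper does this cleanly by introducing $a:=\min\{b\in\mT: a_1\nmid b\}$ and the little Lemma~\ref{lm:soloalprincipio}: one shows that for the gap $c=F-g$ there must exist some $j\in\llbracket 1,a-1\rrbracket$ (not merely $j\le m-1$) with $m\nmid j$ such that $c-1+j\in\mT$; equivalently, the gap $j$ and the gap $g+1-j$ give the desired decomposition $g+1=j+(g+1-j)$. The paper's case split is accordingly $c>F-a$ versus $c\le F-a$ (i.e.\ $g<a$ versus $g\ge a$), rather than your split on whether $g-1\in\mT$. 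If you want to salvage your line of argument, the missing idea is precisely this: iterate your run argument with the newly discovered gaps in $\llbracket m+1,(k+1)m\rrbracket$ (where $km=F-g-1$) to push $a$ past $g$, which is impossible since $g\ge a$ once $m\ge3$. That works, but it is essentially a bootstrapped version of Lemma~\ref{lm:soloalprincipio} and is not short.
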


To prove this theorem we use the following two lemmas. We believe that they are known but since we could not find a precise reference, we decided to include their proofs.

\begin{lemma}\label{pr:frobsim}
Let $\mT \subset \N$ be a symmetric numerical semigroup and consider $a_1 <\cdots < a_n$ its minimal set of generators. If $2 \notin \mT$, then $a_n < F(\mT)$. 
\end{lemma}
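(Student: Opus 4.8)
The plan is to prove the contrapositive-style statement directly: assuming $\mT$ is symmetric, $2 \notin \mT$, and $a_1 < \cdots < a_n$ is the minimal generating set, I want to show $a_n < F(\mT)$. Since $\mT \neq \N$ (because $2 \notin \mT$ forces $1 \notin \mT$ as well, so $F(\mT) \geq 1$), the Frobenius number $F := F(\mT)$ is a well-defined nonnegative integer, and in fact $F \geq 3$ since $1, 2 \notin \mT$. The key structural fact I would use is symmetry in the form: for every $b \in \Z$, exactly one of $b \in \mT$ or $F - b \in \mT$ holds; equivalently $b \in \mT \iff F - b \notin \mT$.

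The first step is to suppose for contradiction that $a_n \geq F$. Since $a_n \in \mT$ and $F \notin \mT$, we cannot have $a_n = F$, so $a_n > F$, i.e. $a_n \geq F + 1$. Write $b := a_n - F \geq 1$. By symmetry, since $F - b = 2F - a_n \notin \mT$ would be needed to place $b \notin \mT$... I should be careful: symmetry says $b \in \mT \iff F - b \notin \mT$. Here I would instead look at the element $a_n - F$. Because $a_n \in \mT$, consider whether $a_n - F$ or rather use the following cleaner route: for a symmetric semigroup, $F - x \in \mT$ for every $x \in \llbracket 0, F \rrbracket$ with $x \notin \mT$... Actually the most efficient approach: since $a_n$ is a minimal generator, $a_n - s \notin \mT$ for every nonzero $s \in \mT$ with $s < a_n$ (otherwise $a_n = s + (a_n - s)$ would be a decomposition into smaller semigroup elements). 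If $a_n > F$, then in particular $a_n - s' \in \mT$ for $s' = a_n - F' $... hmm. Let me instead argue: if $a_n \geq F+1$, then $a_n - a_1 \geq F + 1 - a_1$; I would show $a_n - a_1 \in \mT$ using symmetry (the gaps of $\mT$ are all $\leq F$, and $a_n - a_1$ is "large"), which contradicts minimality of $a_n$ unless $a_n - a_1 = 0$, impossible since $n \geq 2$. The point that makes $n \geq 2$: if $n = 1$ then $\mT = \langle a_1 \rangle$ is only a numerical semigroup when $a_1 = 1$, forcing $\mT = \N$, contradicting $2 \notin \mT$; so indeed $n \geq 2$ and $a_1 < a_n$.

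To nail down $a_n - a_1 \in \mT$: every integer $g > F$ lies in $\mT$ by definition of the Frobenius number, so it suffices to show $a_n - a_1 > F$, i.e. $a_n > F + a_1$. This is a strictly stronger inequality than $a_n \geq F+1$, so the contradiction hypothesis $a_n \geq F+1$ alone is not enough — I need to work harder, and this is where the hypothesis $2 \notin \mT$ enters crucially. The cleanest path: use symmetry to control $a_n$ from above. Since $2 \notin \mT$, symmetry gives $F - 2 \in \mT$. Now suppose $a_n \geq F$; I claim this forces a contradiction with $a_n$ being a minimal generator by writing $a_n = (F-2) + (a_n - F + 2)$ where $a_n - F + 2 \geq 2$, and showing $a_n - F + 2 \in \mT$: indeed if $a_n > F$ then $a_n - F + 2 \geq 3 > F$? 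No, that needs $F \leq 2$, false. So instead: $a_n - F + 2$ — I want this in $\mT$. If $a_n - F + 2 \notin \mT$ then by symmetry $F - (a_n - F + 2) = 2F - a_n - 2 \in \mT$. The hard part, and the main obstacle, is exactly this bookkeeping: turning "$a_n \geq F$" into a genuine contradiction with minimality, and I expect it requires iterating the symmetry relation or a short case analysis on the size of $a_n - F$. I would organize it as: (i) reduce to $n \geq 2$; (ii) assume $a_n \geq F$, hence $a_n > F$; (iii) use $F - 2 \in \mT$ together with the fact that $a_n$ decomposes, deriving that some proper summand $a_n - s$ with $s \in \{2\} \cup (\mT \setminus \{0\})$ lies in $\mT$, contradicting that $a_n \in \MSG{\mT}$. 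The delicate point is ensuring the summand is a genuine nonzero element strictly below $a_n$, which is where $2 \notin \mT$ (so $F \geq 3$, giving room) does the real work; I'd expect the final write-up to spend most of its length on this one inequality chase.
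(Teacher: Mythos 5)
There is a genuine gap: your write-up sets up the contradiction framework (assume $a_n \geq F(\mT)$, hence $a_n > F(\mT)$ since $F(\mT)\notin\mT$, and try to split $a_n$ as a sum of two nonzero elements of $\mT$ to contradict $a_n\in\MSG{\mT}$), but the splitting itself is never carried out. Each concrete attempt you make is abandoned: $a_n-a_1\in\mT$ would need $a_n>F(\mT)+a_1$, which you correctly note does not follow from $a_n\geq F(\mT)+1$; the decomposition $a_n=(F(\mT)-2)+(a_n-F(\mT)+2)$ needs $a_n-F(\mT)+2\in\mT$, which you cannot establish and which indeed need not hold a priori; and the fallback ``by symmetry $2F(\mT)-a_n-2\in\mT$'' is left hanging. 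The proposal then explicitly defers the core difficulty (``I expect it requires iterating the symmetry relation or a short case analysis''), so the central step of the lemma is missing rather than proved. A smaller inaccuracy: $1,2\notin\mT$ only gives $F(\mT)\geq 2$; the bound $F(\mT)\geq 3$ is true but comes from symmetry forcing $F(\mT)$ to be odd, not from the reason you state.

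For comparison, the paper proves the stronger fact that \emph{every} $y\in\mT$ with $y>F(\mT)$ is a sum of two nonzero elements of $\mT$, whence no minimal generator exceeds $F(\mT)$. The cases $y=F(\mT)+1$ and $y=F(\mT)+2$ are handled by the explicit splitting $y=a_1+(y-a_1)$, where $y-a_1\in\mT$ because $F(\mT)-(y-a_1)$ equals $a_1-1$ or $a_1-2$, neither of which lies in $\mT$ (here $2\notin\mT$ enters through $a_1\geq 3$) and $\mT$ is symmetric; the cases $y\geq F(\mT)+3$ use a counting argument: symmetry pins down $|\llbracket 1,y-1\rrbracket\cap\mT|$, which is too large for the pairs $\{i,y-i\}$ to each meet $\mT$ at most once, so some $i$ and $y-i$ both lie in $\mT$. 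Nothing equivalent to either of these mechanisms appears in your sketch. If you prefer to stay with your minimal-generator route, it can be completed, e.g.\ by observing that for $a_n>F(\mT)$ one has $F(\mT)-(a_n-a_1)<a_1$, so symmetry gives $a_n-a_1\in\mT$ unless $a_n=F(\mT)+a_1$, and in that remaining case $a_2-a_1\notin\mT$ (minimality of $a_2$) forces $a_n-a_2=F(\mT)-(a_2-a_1)\in\mT$, with the two-generator case checked directly --- but some such argument must actually be written down; as it stands the proof is not there.
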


\begin{proof}
We prove that every $y \in \mT$ such that $y > F(\mT)$ can be written as $y = z_1 + z_2$ with $z_1,z_2 \in \mT \setminus \{0\}$ and, hence, $y\notin \MSG{\mT}$.
\begin{itemize}
    \item For $y = F(\mT) + 1$, we take $z_1 = a_1 \in \mT$ and $z_2 = F(\mT) - a_1 + 1$. We have that $z_2 \in \mT$ because $F(\mT) - z_2 = a_1 - 1 \notin \mT$ and $\mT$ is symmetric.
    \item For $y = F(\mT) + 2$, we take $z_1 = a_1 \in \mT$ and $z_2 = F(\mT) - a_1 + 2$. We have that $z_2 \in \mT$ because $F(\mT) - z_2 = a_1 - 2 \notin \mT$ (because $a_1>2$) and $\mT$ is symmetric.
    \item For $y = F(\mT) + 3$. If $y/2 \in \mT$, we take $z_1 = z_2 = y/2$. Otherwise, we observe that \[ |\llbracket 1,y-1 \rrbracket \cap \mT| = |\llbracket 1,F(\mT) \rrbracket \cap \mT| + y - F(\mT) - 1 = y - \frac{F(\mT)+3}{2} = \frac{y}{2} \, . \] Thus, there exists $1 \leq i < y/2$ such that $i,y-i \in \mT$ and we are done. 
    \item For $y > F(\mT) + 3$, we observe that \[ |\llbracket 1,y-1 \rrbracket \cap \mT| = |\llbracket 1,F(\mT) \rrbracket \cap \mT| + y - F(\mT) - 1 = y - \frac{F(\mT)+3}{2} > \frac{y}{2} \, . \]
Thus there exists $1 \leq i \leq y/2$ such that $i,y-i \in \mT$ and we are done. 
\end{itemize}
\end{proof}

\begin{lemma}\label{lm:soloalprincipio} 
Let $\mS_1 = \langle a_1,\ldots,a_n \rangle \subsetneq \N$ be a numerical semigroup with $a_1 < \cdots < a_n$, and set $a := \min \{b \in \mS_1 : a_1 \nmid b \}$.   If $y \in \N$ satisfies that $y + i \notin \mS $ for all $i \in \{0,\ldots,a-1\}$ such that $a_1 \nmid i$, then $y = 0$.   
\end{lemma}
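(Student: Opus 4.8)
The plan is to reason modulo $a_1$ and exploit the minimality of $a$. Write $a_1 = c$ for brevity. The hypothesis says $y + i \notin \mS_1$ for every $i \in \{0,\dots,c-1\}$ (note $a \le c$ would be too optimistic; in general $a$ could exceed $c$, but the relevant range is $\{0,\dots,a-1\}$) with $c \nmid i$. Suppose for contradiction that $y > 0$. I would first observe that $y \in \mS_1$ (take $i=0$), so $y = \sum \alpha_j a_j$ for some $\alpha_j \in \N$, and since $y>0$ at least one $\alpha_j$ is positive.

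The core idea: I want to produce some $i$ with $0 \le i \le a-1$, $c \nmid i$, and $y+i \in \mS_1$, contradicting the hypothesis. The natural candidate is to look at the residue of $y$ modulo $c$. If $c \mid y$, I would instead try to peel off structure: since $y>0$ and $c \mid y$ but also $y \in \mS_1$, consider whether $y$ genuinely requires a generator other than $a_1$. If $y$ is a multiple of $a_1$ only, say $y = \alpha_1 a_1$ with $\alpha_1 \ge 1$, then $y + a \in \mS_1$ (as $a \in \mS_1$), and $a \le a-1+1$... here I need $a$ itself, not $a-1$; so I should be more careful — the hypothesis range is $\{0,\dots,a-1\}$, so I cannot directly use $i=a$. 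Instead: by definition of $a$, every element of $\mS_1$ strictly below $a$ is a multiple of $a_1$. So the elements $y, y+1, \dots, y+a-1$: those among them lying in $\mS_1$ — wait, they needn't be below $a$. Let me reorganize: among $i \in \{0,\dots,a-1\}$, the ones with $c \mid i$ are $i = 0, c, 2c, \dots$; for these the hypothesis is silent. So the hypothesis forbids $y+i \in \mS_1$ precisely for the $i$ in $\{0,\dots,a-1\}$ that are \emph{not} multiples of $c$. Since $0$ is a multiple of $c$, we only get $y \in \mS_1$ for free; but actually the statement wants to use $y \in \mS_1$ (taking... no, $i=0$ gives no info). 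Hmm — so I should re-read: the hypothesis is ``$y+i \notin \mS_1$ for all $i$ with $c \nmid i$'' — it does \emph{not} say $y \in \mS_1$. So $y$ need not even lie in $\mS_1$ a priori.

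Given that, here is the cleaner route I would take. Since $\mS_1$ is a numerical semigroup, $\N \setminus \mS_1$ is finite, so for $y$ large all residues are eventually hit; thus $y$ must be ``small''. More precisely: I claim $y$ together with a short window above it must contain an element of $\mS_1$ in a forbidden residue unless $y=0$. Consider the smallest element $s \in \mS_1$ with $s \ge y$ and $c \nmid (s - y)$ — equivalently $s \not\equiv y \pmod c$. If $y>0$: take any generator $a_j$ with $c \nmid a_j$ (one exists since $\mS_1 \ne c\N$, indeed $a \le$ such a generator-sum). Then among $y, y+1, \dots$ we will meet $\mS_1$-elements; the key quantitative input is that consecutive gaps in $\mS_1$ below the Frobenius number, combined with the definition of $a$ as the least non-multiple of $a_1$ in $\mS_1$, forces a non-$c$-multiple shift of size $< a$ landing in $\mS_1$ whenever $y \ge 1$.

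The main obstacle, and where I'd spend the most care, is pinning down exactly why the window size $a$ (not $a+1$, not $c$) is the right bound — this is presumably used later to make Example~\ref{rem:GorensteinFacil}/Theorem~\ref{th:gorenstein} go through sharply. I expect the argument splits on whether $c \mid y$: if $c \mid y$ and $y \ge c$ (or $y \ge a$), then $y + a \equiv a \not\equiv 0$, wait again $a$ is out of range. So the genuinely delicate sub-case is small $y$ with $c \mid y$, and the resolution must be: if $0 < y$ and $c\mid y$, write $y = c\mu$; then $a \le y$ is impossible to exploit directly, so instead use that $a \in \mS_1$ with $a < c$ forces... Actually if $a < c$ then $y + (a) $ — still out of range by one. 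This boundary bookkeeping is the crux; I would handle it by instead choosing $i \equiv -y \pmod{c}$ with $0 < i < c$, noting $c \nmid i$, and showing $y + i \in \mS_1$ because $y+i$ is a multiple of $c$ plus... no. The honest plan: reduce to showing that $\{y, y+1, \dots, y+a-1\}$ meeting $\mS_1$ only in multiples-of-$c$ positions forces $y=0$, by a direct induction downward on $y$ using the minimality of $a$ — if $y \in \mS_1$ then subtract $a_1$ repeatedly until exiting $\mS_1$ or reaching $0$, and track residues. I'll carry out that descent carefully as the heart of the proof.
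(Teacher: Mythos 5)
Your proposal never arrives at a proof: it ends with a plan (``I'll carry out that descent carefully as the heart of the proof'') and explicitly flags the key quantitative step --- why the window $\{0,\ldots,a-1\}$ suffices --- as unresolved. The two concrete observations that make the lemma go through are both missing. First, $a>a_1$ always (every element of $\mS_1$ below $a_2$ is a multiple of $a_1$, so in fact $a\geq a_2$); your worry about the case $a<a_1$ is vacuous, and because $a>a_1$ the hypothesis applies to $i=1,\ldots,a_1-1$. Among the $a_1$ consecutive integers $y,y+1,\ldots,y+a_1-1$ exactly one is a multiple of $a_1$ and hence lies in $\mS_1$; since $y+1,\ldots,y+a_1-1\notin\mS_1$, that element must be $y$ itself, so $a_1\mid y$ (and in particular $y\in\mS_1$ --- this is how membership of $y$ is recovered, not from $i=0$, where you correctly noted the hypothesis is silent).

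Second, the boundary issue you circle around (``$i=a$ is out of range by one'') is resolved by taking $i=a-a_1$, which satisfies $0\le a-a_1\le a-1$ and $a_1\nmid (a-a_1)$, so the hypothesis gives $y+a-a_1\notin\mS_1$. But $y+a-a_1\equiv a\pmod{a_1}$, and by minimality of $a$ every integer in that residue class which is $\geq a$ lies in $\mS_1$ (it equals $a$ plus a multiple of $a_1$). Hence $y+a-a_1<a$, i.e.\ $y<a_1$, which together with $a_1\mid y$ forces $y=0$. Neither your ``smallest $s\in\mS_1$ with $s\ge y$'' idea nor the proposed downward descent by subtracting $a_1$ is developed far enough to substitute for these two steps, so as written the argument has a genuine gap precisely at the point you identified as the crux.
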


\begin{proof} Since $y+1,\ldots,y+a_1-1 \notin \mS_1$, we deduce that $a_1$ divides $y$, so $y\in \mS_1$. Moreover, $a-a_1$ is not a multiple of $a_1$, so $y+a-a_1 \notin \mS_1$ and $y + a - a_1 \equiv a \pmod{a_1}$. Thus, we get that $y + a - a_1 \leq a - a_1$, and hence $y = 0$.
\end{proof}

\begin{proof}[Proof of Theorem~\ref{th:gorenstein}]
Since $\mT$ is symmetric and $2 \notin \mT$, then by Lemma~\ref{pr:frobsim} we have that $\MSG{\mT} \subset \{a_1,\ldots,a_n\}$. Hence, $\mS_1 = \mT$ and $\mS_1$ is symmetric.  Moreover, since $1,2 \notin \mS_1$, then $d = a_n = F(\mS_1) - 1$ and $a_{n-1} = F(\mS_1) - 2$. Thus $\mS_2 = \N$ and we get that $F(\mS_2) = -1$ and $F(\mS_1) + F(\mS_2) = d$. By Proposition~\ref{pr:caracterizaGorenstein}, it is enough to prove that $\mC$ is \acm{} to conclude that it is arithmetically Gorenstein.

One can easily check that $\Ap_1 = \{a \in \mS_1 \, \vert \, 0 \leq a < d\} \cup \{g+d \, \vert \, g \notin \mS_1,  \, 1 < g < d\} \cup \{2d+1\}$, and $\Ap_2 = \{0,1,\ldots,d-1\}$. Consider now the following set $B \subset \N^2$ with $d$ elements: \[ \begin{split}
B = &\{ (0,0)\} \cup  \{(a,d-a)  \, \vert \, a \in \mS_1, 1 < a < d\} \cup \\ 
&\{ (d+g,d-g) \, \vert \, g \notin \mS_1,  1 < g < d\}  \cup   \{ (2d+1,d-1)\} \, .    
\end{split}\]
By Proposition~\ref{prop:caractCM}~\ref{prop:caractCMd}, $\mC$ is arithmetically Cohen-Macaulay \iff{} $B \subset \mS$, and in this case $\AP_\mS = B$. Let us prove that $B\subset \mS$. Clearly $(0,0) \in \mS$ and $\{(a,d-a)\, \vert \, a \in \mS_1,\, 0 < a < d\} = \{(a_i,d-a_i) \, \vert \, 1 \leq i < n\} \subset \mS$, and one has to show that $(d+g,d-g) \in \mS$ for all $g \notin \mS_1, 1 < g < d$ and $(2d+1,d-1) \in \mS$. 
Let $a \in \mS_1$ be the minimum element in $\mS_1$ which is not a multiple of $a_1$. We distinguish between two cases.
\begin{itemize}
\item[\underline{Case 1}] $d > g > F(\mS_1) - a = d + 1 - a$. We claim that $g+1 \in \mS_1$. Otherwise, by the symmetry of $\mS_1$ one has that $F(\mS_1) - g$ and $F(\mS_1) - g - 1$ are two consecutive elements of $\mS_1$ which are both smaller than $a$, and this is not possible. Then, $(d-1,1),\, (g+1,d-g-1) \in \mS$ and we get that $(d+g,d-g) = (d-1,1) + (g+1,d-g-1) \in \mS$. \\
\item[\underline{Case 2}] $1 < g \leq F(\mS_1) - a = d + 1 - a$. We claim that there exists $j \in \{0,\ldots,a-1\}$ such that both $d+1-j$ and $g-1+j$ belong to $\mS_1$. Assume by contradiction that this statement does not hold. Whenever $j \in \{0,\ldots,a-1\}$ is not a multiple of $a_1$, we have that $j \notin \mS_1$ and, by the symmetry of $\mS_1$, 
$F(\mS_1) - j = d+1-j \in \mS_1$ and hence $g-1+j \notin \mS_1$. By Lemma~\ref{lm:soloalprincipio}, this means that $g = 1$, a contradiction. Now, we take $j \in \{0,\ldots,a-1\}$ such that $d+1-j,g-1+j \in \mS_1$ (clearly $j \neq 0$ because $d+1 = F(\mS_1) \notin \mS_1$). Then $(g-1+j,d+1-g-j), (d+1-j,j-1) \in \mS$ and hence $(d+g, d-g) = (g-1+j,d+1-g-j) + (d+1-j,j-1) \in \mS.$
\end{itemize}
Finally, taking any $ g \notin \mS_1$, $1 < g < d$, we have that $F(\mS_1) - g = d+1 - g \in \mS_1$. Thus, $(2d+1,d-1) = (d+g,d-g) + (d+1-g,g-1) \in \mS.$ 
\end{proof}

\begin{remark}
By the proof of Theorem~\ref{th:gorenstein} and \cite[Thm.~3.6]{GG23}, it follows that the Castelnuovo-Mumford regularity of $k[\mC]$ is $\reg{k[\mC]} = 3$ for all the Gorenstein curves that we constructed in Example \ref{rem:GorensteinFacil} and Theorem~\ref{th:gorenstein}.
\end{remark}

Following the construction in Theorem~\ref{th:gorenstein}, one gets an arithmetically Gorenstein projective curve $\mC$. However, the Betti numbers of $k[\mC_1]$ and $k[\mC]$ can be very different, as the following example shows.

\begin{example}\label{ex:ejGorenstein}
Consider the symmetric numerical semigroup $\mT = \langle 4, 9, 10 \rangle$. One has that the Frobenius number of $\mT$ is $F(\mT) = 15$ and, hence, $\mT \cap \llbracket 0,14 \rrbracket = \{0,4,8,9,10,12,$ $13,14\}$. By Theorem~\ref{th:gorenstein} we have that the projective monomial curve defined by the sequence $4 < 8 < 9 < 10 < 12 < 13 < 14$ is Gorenstein. A computation with \cite{Singular} shows that the Betti sequence of $k[\mC_1]$ is $(1,6,15,20,15,6,1)$, while the Betti sequence of $k[\mC]$ is $(1,15,39,50,39,15,1)$. 
\end{example}

\section{Conclusions / Open questions}

Let $\mC_1 \subseteq \A_k^n $ be an affine monomial curve and consider $\mC \subseteq \P_k^n$ its projective closure. The Betti numbers of $k[\mC]$ (the coordinate ring of $\mC$) are always greater than or equal to those of $k[\mC_1]$ (the coordinate ring of $\mC_1$). We explore when the Betti numbers of both rings are identical. In Theorem~\ref{thm:BettiPosets}, which is our main result, we provide a sufficient condition for this in terms of the poset structure of certain Apery sets. As Example \ref{ex:convthm2.1}  shows, this condition is not necessary. The following natural question remains open.

\begin{prob}
Characterize when the Betti numbers of the coordinate rings of $\mC_1$ and $\mC$ coincide.
\end{prob}

Proposition~\ref{prop:Ap1graded_Sengupta} is an intermediate key result to obtain our main one. It translates a Gr\"obner-basis condition on the ideal of $\mC_1$ into a poset flavored combinatorial criterion. In one of the implications we added the hypothesis that $k[\mC]$ is Cohen-Macaulay and our proof relies on this. We wonder if this assumption can be dropped.

\medskip

We applied Theorem~\ref{thm:BettiPosets} to provide families of affine curves whose coordinate rings have the same Betti numbers as their corresponding homogenizations.  Also, we used our results to study the shifted family of monomial curves, i.e., the family of curves associated to the sequences $j+a_1 < \cdots < j+a_n$ for different values of $j \in \mathbb N$. In this context, Vu proved in \cite{Vu} that for $j$ big enough, one has an equality between the Betti numbers of the corresponding affine and projective curves. Using our results, we improve  Vu's threshold in Corollary~\ref{cr:improveVu}. Vu also showed that the Betti numbers become periodic in the shifted family for $j > N$, being $N$ an explicit value. Computational experiments suggest that the value of $N$ can be optimized. Hence, we propose the following problem.

\begin{prob}
Improve Vu's bound on the least value of $j$ such that the Betti numbers of the shifted family become periodic.
\end{prob}

A partial solution to this problem can be found in \cite{Dumishift}, where the author finds a better estimate when $\mS_1$ is a three generated numerical semigroup.

\medskip

Consider the projective monomial curve $\mC \subset \P_k^n$ given parametrically by $x_i = u^{a_i}v^{d-a_i}$ for all $i \in \{0,\ldots,n\}$, with $0 = a_0 < a_1 < \ldots < a_n = d$ a sequence of relatively prime integers. One can associate to $\mC$ the numerical semigroup $\mS_{\mC} = \langle a_1,\ldots,a_n \rangle$. It is known that whenever $\mC$ is arithmetically Gorenstein, then $\mS_{\mC}$ is symmetric. In the fourth section we prove a sort of converse to this statement. More precisely, in Theorem~\ref{th:gorenstein}, for a symmetric numerical semigroup $\mT$ we construct an arithmetically Gorenstein projective monomial curve $\mC$ such that $\mT = \mS_{\mC}$. In view of our result, we wonder if the answer to the following question could be positive:

\begin{prob}
For a numerical semigroup $\mT$ of type $t > 1$, does there exist an arithmetically Cohen-Macaulay curve $\mC$ of type $t$ such that $\mS_{\mC} = \mT$? 
\end{prob}

\medskip

\textbf{Conflict of Interest.} On behalf of all authors, the corresponding author states that there is no conflict of interest.

\end{document}